\newcommand\R{\mathbb R}
\newcommand\T{\mathbb T}
\newcommand\N{\mathbb N}
\newcommand\E{\mathbb E}
\newcommand\U{\mathbb U}
\newcommand\X{\mathcal X}
\newcommand\V{\mathcal V}
\newcommand\Y{\mathcal Y}
\newcommand\Z{\mathcal Z}
\newcommand\e{\varepsilon}
\newcommand\p{\mathbb P}
\newcommand\W{\mathcal W}
\newcommand\F{\mathcal F}
\newcommand\s{\mathcal S}
\newcommand\B{\mathcal B}
\newcommand\LL{\mathcal L}
\newcommand{\dd}{{\,\rm d}}
\newcommand{\Ls}{\Lambda^s}
\newcommand{\norm}[1]{\left\lVert#1\right\rVert}    
\newcommand\Pm[2][{\mathscr{P} }]{{\mathscr P}_{#2}}
\theoremstyle{plain}
\numberwithin{equation}{section}
\newtheorem{Theorem}{Theorem}[section]
\newtheorem{Lemma}{Lemma}[section]
\newtheorem{Definition}{Definition}[section]
\newtheorem{Assumption}{Assumption}
\theoremstyle{definition}
\newtheorem{Remark}{Remark}[section]
\newcommand{\abs}[1]{\lvert#1\rvert}
\begin{document}

	\title[Singular SDEs with applications]{A local-in-time theory for singular SDEs with applications to fluid models with transport noise}
	
	\author[D. Alonso-Or\'{a}n]{Diego Alonso-Or\'{a}n}
	\address{Institute for Applied Mathematics, University of Bonn,
		Endenicher Allee 60, D-53115 Bonn, Germany}
	\email{alonso@iam.uni-bonn.de}
	\author[C. Rohde]{Christian Rohde}
	\address{Institut f\"{u}r Angewandte Analysis und Numerische Simulation, Universit\"{a}t Stuttgart, Pfaffenwaldring 57, 70569 Stuttgart, Germany}
	\email{christian.rohde@mathematik.uni-stuttgart.de}

	\author[H. Tang]{Hao Tang}
	\address{Institut f\"{u}r Angewandte Analysis und Numerische Simulation, Universit\"{a}t Stuttgart, Pfaffenwaldring 57, 70569 Stuttgart, Germany}
	
	\email{hao.tang@mathematik.uni-stuttgart.de}
	\thanks{D.~Alonso-Or\'{a}n and H.~Tang are supported by the Alexander von Humboldt Foundation. C.~Rohde acknowledges  support by Deutsche Forschungsgemeinschaft (DFG,
		German Research Foundation) under Germany's Excellence Strategy - EXC 2075 - 39074001. }
	
\subjclass[2020]{Primary: 60H15, 35Q35;  Secondary: 60H17, 35A01.}

	\date{\today}
	
	
	\keywords{Singular Stochastic differential equations; Stochastic fluid models; Transport noise; Generalized Lie derivative operators.}
	
	\begin{abstract}
		In this paper, we establish a local theory, i.e., existence, uniqueness and blow-up criterion, for a general family of singular SDEs in some Hilbert space.  The key requirement is an approximation property that allows us to  embed the singular  drift and diffusion mappings into a hierarchy of regular mappings that are invariant with respect to the Hilbert space and enjoy a cancellation property.\\  
		Various  nonlinear models in fluid dynamics with transport noise belong to this type of singular SDEs. 
		With a cancellation estimate for generalized Lie derivative operators, we  can construct such  regular approximations for cases involving the Lie derivative operators, or more generally, differential operators of order one with suitable coefficients.  In particular, we apply the abstract theory to achieve   novel  local-in-time results for the stochastic two-component Camassa--Holm (CH) system and  for the stochastic C\'ordoba-C\'ordoba-Fontelos (CCF) model. 
	\end{abstract}
	
	\maketitle
	
	%

	%

	\section{Introduction}	
	Let  $\X$, $\Y$ and $\Z$ be three separable Hilbert spaces such that  
	\begin{equation}\label{embed}
	\X\subset\Y \subset\Z. 
	\end{equation}
	We consider  the  initial value problem for a stochastic differential equation (SDE) with unknown process $X=X(t)$, $t\ge 0$, given by
	\begin{equation}
	\dd X=\big(b(t,X)+g(t,X)\big)\dd t+h(t,X)\dd\W,\ \ X(0) =X_0 \in \X. \label{Abstact irregular SDE}
	\end{equation}
	In \eqref{Abstact irregular SDE}, $\W$ denotes  a cylindrical Wiener process  defined on some  separable Hilbert space $\U$. The drift is given by  the sum of the mappings $b:[0,\infty)\times \X\rightarrow\X$ and 
	$g:[0,\infty)\times \X\rightarrow\Z$. The operator $ h:[0,\infty)\times \X\rightarrow \LL_2(\U;\Y)$
	stands for the diffusion coefficient  with $\LL_{2}(\U;\Y)$ being the space of  Hilbert-Schmidt operators from $\U$ to $\Y$.  We call \eqref{Abstact irregular SDE} a \emph{singular} initial value problem because $g$ and $h$ map $\X$  to the larger spaces $\Z$ and $\Y$,  i.e.~they are not invariant in $\X$.  We refer to  Sections \ref{notations definitions}, \ref{section:assumptions and results} for the precise setting. 
	
	For the entirely regular case  $\X=\Y =\Z$, it is well-known that  (local) Lipschitz conditions on $b(t,\cdot)+g(t,\cdot)$ and $h(t,\cdot)$ ensure 
	that  \eqref{Abstact irregular SDE} admits  unique (local) pathwise solution in $\X$. If additional
	monotonicity properties  on the coefficients are imposed, then the It\^{o} formula  for Gelfand-triple Hilbert spaces can be exploited to 
	assure global existence and continuity of solutions,  cf.~\cite{Kallianpur-Xiong-1995-book,Leha-Ritter-1985-MA,Krylov-Rozovskiui-1979-chapter,Prevot-Rockner-2007-book} 
	and the references therein. Notably this covers also the case when the Hilbert spaces form a  Gelfand triple. 
	
	In this work, we focus on the singular case which appears in particular for ideal fluid  models. 
	Indeed,   when we consider particular examples in Sobolev spaces $\X=H^s$, if $g(t,X)$ and $h(t,X)$ involve $\nabla X$ in a nonlinear way, or more generally, general Lie-type derivatives of $X$ (see our examples \eqref{SCH2 Ito form} and \eqref{Transport:nonloca:eq:ito} in the second part of the paper), then $g(t,X)$ and $h(t,X)$  can not be expected to be in $\X=H^s$, either. Likewise, the concept of  monotonicity fails to apply as it relies 
	on self-embedding drift and diffusion mappings.
	Working with the  abstract framework in \eqref{Abstact irregular SDE} 
	%
	entails another  difficulty as compared to the regular or the Gelfand-triple case: the It\^{o} formula is no longer available. 
	%
	%
	%
	To  highlight the latter difficulty, let us recall the classical It\^{o} formula for a  Gelfand triplet $V\hookrightarrow H\hookrightarrow V^*$, where $H$ is a separable Hilbert space with inner product $(\cdot,\cdot)$ and $H^*$ is its dual; $V$ is a Banach space such that the embedding $V\hookrightarrow H$ is dense. 
	Then the following result is classical, see {\cite[Theorem I.3.1]{Krylov-Rozovskiui-1979-chapter} or \cite[Theorem 4.2.5]{Prevot-Rockner-2007-book}}. 
	
	\begin{center}
		\begin{minipage}{15cm}	
			\textit{\label{Classical Ito}
				Assume that $\mathbf{U}$ is a continuous $V^{*}$-valued stochastic process given by 
				$$\mathbf{U}(t)=\mathbf{U}(0)+\int_{0}^{t} \mathbf{g}(s)\, \mathrm{d} s+\int_{0}^{t} \mathbf{G}(s)\, \mathrm{d} \W(s), \quad t \in[0, T],$$
				where $\mathbf{G}\in L^{2}\left(\Omega \times[0, T] ; L_{2}(\U ; H)\right)$ and $\mathbf{g} \in L^{2}\left(\Omega \times[0, T] ; V^{*}\right)$ are both progressively measurable and $\mathbf{U}(0)\in L^2(\Omega;H)$  is $\F_0$-measurable. If $\mathbf{U}\in L^{2}\left(\Omega \times[0, T] ; V\right)$, then $\mathbf{U}$ is an $H$-valued continuous stochastic process and the  It\^{o} formula 
				\begin{align}
				\|\mathbf{U}(t)\|_{H}^{2}=\,&\|\mathbf{U}(0)\|_{H}^{2}+2 \int_{0}^{t}
				\tensor[_{V^*}]{\left\langle \mathbf{g}(s), \mathbf{U}(s)\right\rangle}{_{V}} \dd s
				\notag\\
				&+2 \int_{0}^{t} \left(\mathbf{G}(s) \,  \dd \W,\mathbf{U}(s)\right)_H +\int_{0}^{t}\|\mathbf{G}(s)\|_{\LL_{2}(\U; H)}^{2} \, \mathrm{d} s.\label{Classical Ito formula}
				\end{align}
				holds true $\p-a.s.$ for  all $t \in[0, T]$.	
			}	
		\end{minipage}
	\end{center}

	Notice that \eqref{Classical Ito formula} is applicable for $\mathbf{U}(0)\in H$, $\mathbf{U}\in L^{2}\left(\Omega \times[0, T] ; V\right)$, $\mathbf{g} \in L^{2}\left(\Omega \times[0, T] ; V^{*}\right)$ and $\mathbf{G}\in L^{2}\left(\Omega \times[0, T] ; L_{2}(\U ; H)\right)$.  
	However,  if $\mathbf{G}$ is singular (not invariant in $H$),  then  $\mathbf{G}\in L_{2}(\U ; H)$ is ambiguous. 
	Besides,  even though $\mathbf{g}$ is allowed to be less regular, \eqref{Classical Ito formula} requires $\mathbf{U}(t)$ to be more regular than $\mathbf{U}(0)$, i.e., $\mathbf{U}\in V\hookrightarrow H\ni \mathbf{U}(0)$. In many cases (for example,  stochastic ideal fluid models),  we do
	\textit{not} know that  this holds true. Hence, \eqref{Classical Ito formula}  is \textit{not} applicable in singular cases, and then the time continuity of the solution \textit{cannot} be obtained directly.\\

	The first major goal  of this paper is to establish a local-in-time theory for  \eqref{Abstact irregular SDE} generalizing classical results 
	for e.g.~the completely regular case  $\X=\Y =\Z$. The second goal of this work is to show that the abstract theory for \eqref{Abstact irregular SDE}
	can be used to establish new results for ideal fluid systems with noise.

	\begin{enumerate}
		\item 	To achieve the first goal we    fix in Section \ref{section:assumptions and results} the precise assumptions  on the regular drift $b$ and in particular on the 
		singular drift $g$ and diffusion $h$  (see Assumption \ref{Assum-A}).
		Then we  provide our  main results  for \eqref{Abstact irregular SDE}, including the existence, uniqueness, time regularity, and a result   characterizing the possible blow-up  of  pathwise solutions   (see Theorem \ref{Abstract irregular SDE: T}).  The key requirements 
		for the proof are the assumption on the existence of appropriate  Lipschitz-continuous and   monotone regularizations for the singular
		mappings. This allows us to exploit It\^{o}-like formulas as above.

		\item With the abstract framework at hand, for a large number of nonlinear SPDE 
		models,
		we are able to construct such  regular approximation schemes by using convolution operators and establishing a cancellation property for generalized Lie derivatives (cf. Lemma \ref{Liecancellations}).
		 To set the stage, in Section \ref{sec:application},  we   consider  two   models governing ideal flows  with 
		particularly interesting stochastic perturbation, namely 
		\begin{itemize}
			\item the  two-component Camassa-Holm (CH) system with transport noise \cite{Holm-Erwin-2019-Arxiv}, see \eqref{SCH2 transport noise} below,
			\item a nonlinear transport equation with non-local velocity,  referred to the C\'ordoba-C\'ordoba-Fontelos (CCF) model \cite{Cordoba-etal-2005-Annals}, with transport noise, see \eqref{Transport:nonloca:eq} below.
		\end{itemize}
		In both cases, we obtain a local-tin-time theory in the sense of  the abstract-framework Theorem \ref{Abstract irregular SDE: T}. 
		These results for both  models are new up to our knowledge and  can be found in Section \ref{sect:applications:main results}.
		Finally, we will explain in Section \ref{Sect:further examples}  how our abstract framework and  the regular approximation schemes
		can be applied to a broader class of fluid dynamics equations including the surface quasi-geostrophic (SQG) equation with transport noise. 
		
	\end{enumerate}

	\section{An abstract framework for a class of singular SDEs}\label{sect: Abstact irregular SDE} 
	
	\subsection{Notations and definitions}\label{notations definitions}
	To begin with, we introduce some notations.  We consider a probability space $(\Omega, \mathcal{F},\p)$,  where $\p$ is a probability measure on $\Omega$ and $\mathcal{F}$ is a $\sigma$-algebra. We endow the probability space $(\Omega, \mathcal{F},\p)$ with an increasing filtration $\{\mathcal{F}_t\}_{t\geq0}$, which is a right-continuous filtration on $(\Omega, \mathcal{F})$ such that $\{\mathcal{F}_0\}$ contains all the $\p$-negligible subsets. 
	For some separable Hilbert space $\U$  with a complete orthonormal basis $\{e_i\}_{i\in\N}$ the noise $\W$ in \eqref{Abstact irregular SDE} is  a cylindrical Wiener process, i.e., it is defined by
	\begin{equation}\label{define W}
	\W=\sum_{k=1}^\infty W_ke_k\ \ \p-a.s,
	\end{equation}
	where $\{W_k\}_{k\geq1}$ is a sequence of mutually independent standard 1-D Brownian motions. To guarantee the convergence of the above formal summation, we consider a   larger separable Hilbert space  $\U_0$ such that the canonical injection $\U\hookrightarrow \U_0$ is Hilbert--Schmidt.  
	Therefore, for any $T>0$, we have, cf. \cite{Prato-Zabczyk-2014-Cambridge,Gawarecki-Mandrekar-2010-Springer,Karczewska-1998-AUMCSS},
	$$\W\in C([0,T], \U_0)\ \ \p-a.s.$$  Note that  the choice of the  auxiliary Hilbert spaces $\U$ and $\U_0$  is not crucial for our analysis.  Thus we let  $\U$ and $\U_0$ be arbitrary but fixed in the sequel.\\ 
	For some time  $t>0$, the family  $\sigma\{x_1(\tau),\cdots,x_n(\tau)\}_{\tau\in[0,t]}$ stands for the completion of
	the union $\sigma$-algebra generated by $\left(x_1(\tau),\cdots,x_n(\tau)\right)$ for $\tau\in[0,t]$.
	$\E Y$ stands for the mathematical expectation
	of a random variable $Y$ with respect to $\p$. From now on $\s=(\Omega, \mathcal{F},\p,\{\mathcal{F}_t\}_{t\geq0}, \W)$ is called a stochastic basis.\\
	For any  Hilbert space  $\mathbb{H}$ the inner product is denoted by $(\cdot,\cdot)_{\mathbb{H}}$. Furthermore, the space 
	$\LL_2(\U; \mathbb{H})$  contains  all  Hilbert-Schmidt operators $Z:\U\rightarrow \mathbb{H}$  with finte  norm
	$\|Z\|^2_{\LL_2(\U; \mathbb{H})}=\sum_{k=1}^\infty\|Ze_k\|^2_{\mathbb{H}}.$
	As in \cite[Theorem 2.3.1]{Breit-Feireisl-Hofmanova-2018-Book}, we see that for an $\mathbb{H}$-valued progressively measurable stochastic process $Z$ with  $Z\in L^2\left(\Omega;L^2_{\rm loc}\left([0,\infty);\LL_2(\U; \mathbb{H})\right)\right)$,  one can define the It\^{o} stochastic integral
	\begin{equation*}
	\int_0^t Z \,\dd\W=\sum_{k=1}^\infty\int_0^t Z e_k\, \dd W_k.
	\end{equation*}
	Most notably for the analysis here, if $Z\in\LL_2(\U;\mathbb{H})$ and $\W$ is given as above, we have the Burkholder-Davis-Gundy (BDG) inequality 
	\begin{align*}
	\E\left(\sup_{t\in[0,T]}\left\|\int_0^t Z\,\dd\W\right\|_{\mathbb{H}}^p\right)
	\leq C \E\left(\int_0^T \|Z\|^2_{\LL_2(\U;\mathbb{H})}\,\dd t\right)^\frac{p}{2},\ \ p\geq1,
	\end{align*}
	or in terms of the coefficients,
	\begin{align*}
	\E\left(\sup_{t\in[0,T]}\left\|\sum_{k=1}^\infty\int_0^t Ze_k\,\dd W_k\right\|_{\mathbb{H}}^p\right)
	\leq C \E\left(\int_0^T \sum_{k=1}^\infty\|Ze_k\|^2_{\mathbb{H}}\,\dd t\right)^\frac{p}{2},\ \ p\geq1.
	\end{align*}

	Let $\mathbb X$ be a separable Banach space. $\B(\mathbb X)$ denotes the Borel sets of $\mathbb X$ and  $\Pm{}(\mathbb X)$ stands for the collection of Borel probability measures on $(\mathbb X,\B(\mathbb X))$. We denote $\Pm{r}(\mathbb X)$ the family of probability measures in $\Pm{}(\mathbb X)$  with finite moment of order $r\in[1,\infty)$, i.e.,
	$\Pm{r}(\mathbb X)=\left\{\mu:\int_{\mathbb X}\|x\|_{\mathbb X}^r \,\mu({\rm d}x)<\infty\right\}.$  For two Banach spaces $\mathbb X$ and $\mathbb Y$, $\mathbb X\hookrightarrow\mathbb Y$ means that $\mathbb X$ is  embedded continuously into $\mathbb Y$, and $\mathbb X\hookrightarrow\hookrightarrow\mathbb Y$ means that the embedding is compact.  
	
	For some set $E$, $\textbf{1}_{E}$ denotes  the indicator function on $E$.\\

	Next, let us make precise two different notions of solutions in the Hilbert space $\X$ from \eqref{embed}  for  the Cauchy problem \eqref{Abstact irregular SDE}. 
	\begin{Definition}[Martingale solutions]\label{Martingale solution definition}
		Let $\mu_0\in \Pm{}(\X)$. A triple $(\s, X,\tau)$ is said to be a martingale solution to \eqref{Abstact irregular SDE} if
		\begin{enumerate}
			\item $\s=\left(\Omega, \mathcal{F},\p,\{\mathcal{F}_t\}_{t\geq0}, \W\right)$ is a stochastic basis and $\tau$ is a stopping time with respect to $\{\mathcal{F}_t\}_{t\ge0}$;
			\item $X(\cdot\wedge \tau):\Omega\times[0,\infty)\rightarrow \X$ is an 
			$\mathcal{F}_t$-progressively measurable process such that it is continuous in $\Z$, $\mu_0(\cdot)=\p\{X_0\in \cdot\}$ for all $\cdot\in\B(\X)$ and
			for every $t>0$,
			\begin{align}\label{define solution}
			X(t\wedge \tau)-&X_0
			=\int_0^{t\wedge \tau}
			\big(   b(t', X(t'))+ g(t',X(t')) \big) \, \dd t'+\int_0^{t\wedge \tau}h(t',X(t') )    \, \dd\W \ \ \p-a.s.
			\end{align}
			In \eqref{define solution}, 	$\int_0^{\cdot}
			\left\{b(t', X(t'))+ g(t',X(t'))\right\}\dd t'$ is the Bochner integral on $\Z$ and 
			$\int_0^{\cdot}h(t',X(t'))\,\dd\W$ is a continuous local martingale on $\Y$.

			\item If $\tau=\infty$ $\p-a.s.$, then we say that the martingale solution is global.
		\end{enumerate}

	\end{Definition}
	
	The stronger concept of pathwise solutions is provided in
	
	\begin{Definition}[Pathwise solutions]\label{pathwise solution definition}
		Let $\s=(\Omega, \mathcal{F},\p,\{\mathcal{F}_t\}_{t\geq0}, \W)$ be a fixed stochastic basis. Let $X_0$ be an $\X$-valued $\mathcal{F}_0$-measurable random variable.  A local pathwise solution to \eqref{Abstact irregular SDE} is a pair $(X,\tau)$, where $\tau$ is a stopping time satisfying $\p\{\tau>0\}=1$ and
		$X:\Omega\times[0,\tau]\rightarrow \X$  is an $\mathcal{F}_t$-progressively measurable process satisfying \eqref{define solution} and $X(\cdot\wedge \tau)\in C\left([0,\infty);\X\right)$ almost surely.
		
	\end{Definition}
	
	It follows from Definition \ref{Martingale solution definition} that,
	if a martingale solution exists, then \eqref{define solution} implies that  
	\begin{equation}\label{RHS definition}
	\int_0^{t\wedge \tau}\big(b(t', X(t'))+ g(t',X(t'))\big)\dd t'+\int_0^{t\wedge \tau}h(t',X(t')) \dd\W
	\end{equation}
	takes values in $\X$, even though $g$ and $h$ are not invariant in $\X$. Moreover, Definition \ref{pathwise solution definition} implies that if a pathwise solution exists, then \eqref{RHS definition} is continuous in time in $\X$. 
	
	To study the possible blow-up  of the solutions, we need the following concept of maximal solutions.
	
	\begin{Definition}[Maximal solutions]\label{maximal solution definition}
		Let $\s=(\Omega, \mathcal{F},\p,\{\mathcal{F}_t\}_{t\geq0}, \W)$ be a fixed stochastic basis. Let $X_0$ be an $\X$-valued $\mathcal{F}_0$-measurable random variable.  $(X,\tau^*)$ is called a maximal pathwise solution to \eqref{Abstact irregular SDE}   
		if there is an increasing sequence $\tau_n\rightarrow\tau^*$ such that for any $n\in\N$, $(X,\tau_n)$ is a pathwise solution satisfying 
		\begin{equation*}
		\sup_{t\in[0,\tau_n]}\|X\|_{\X}\geq n\  \ \ a.e.\   on\   \{\tau^*<\infty\}.
		\end{equation*}
		Particularly, if $\tau^*=\infty$ almost surely, then such a solution is called global.

	\end{Definition}
	
	
	\subsection{Assumptions and main results}\label{section:assumptions and results}

	To study the existence of  martingale and pathwise solutions, we  need the following assumptions on  the three separable Hilbert spaces  $\X$, $\Y$, $\Z$
	from   \eqref{embed} and on the 
	coefficients $b$, $g$ and $h$ in \eqref{Abstact irregular SDE}. Recall that $\{e_i\}_{i\in\N}$ is a complete orthonormal basis of $\U$.
	
	\begin{Assumption}\label{Assum-A}
		The Hilbert spaces satisfy the embedding relation  $\X\hookrightarrow \Y \hookrightarrow\hookrightarrow\Z$ and  the coefficients 
		$b:[0,\infty)\times \X\rightarrow\X$, $g:[0,\infty)\times \X\rightarrow\Z$ and $h:[0,\infty)\times \X\rightarrow \LL_2(\U;\Y)$ are continuous in both variables. Let  $\V$ be  a Banach space satisfying $\Z\hookrightarrow\V$.\\
		There are non-decreasing  locally bounded functions  
		$f(\cdot),k(\cdot),q(\cdot) \in C\left([0,+\infty);[0,+\infty)\right)$ such that   the following conditions hold true.
		
		\begin{enumerate}[label={ $(\textbf{A}_\arabic*)$}]
			\item\label{b in X} 
			For all $(t,X)\in [0,\infty)\times \X$, we have 
			\begin{align}\label{A11}
			\|b(t,X)\|_{\X}\leq k(t)f\big(\|X\|_{\V}\big)\|X\|_{\X},
			\end{align}
			and for all $N\in\N$,
			\begin{align}\label{A12}
			\sup_{\|X\|_\X,\|Y\|_\X\le N}\left\{{\bf 1}_{\{X\ne Y\}}  \frac{\|b(t,X)-b(t,Y)\|_{\X}}{\|X-Y\|_{\X}}\right\} \le q(N)k(t).
			\end{align}
			Besides, for any bounded sequence $\{X_\e\}\subset\X$ such that $X_\e\rightarrow X$ in $\Z$,
			\begin{align}\label{A13}
			\lim_{n\rightarrow\infty} \|b(t,X_\e)-b(t,X)\|_{\Z}=0, \ t\geq0.
			\end{align}
			
			\item \label{gn hn condition} 
			For $\e\in(0,1)$ and $N\ge 1$ there exist  progressively measurable maps 
			$$g_\e: [0,\infty)\times\X \rightarrow\X,\ \ h_\e: [0,\infty)\times\X \rightarrow\LL_2(\U;\X)$$ 
			and constants $C_{\e,N}>0$ 
			such that for all $t\ge0$ the bounds 
			\begin{align}\label{A21}
			&\sup_{\e\in(0,1),\|X\|_\X\le N}
			\left\{\|g_\e(t,X)\|_{\Z}+\|g(t,X)\|_{\Z}+\|h_\e(t,X)\|_{\LL_2(\U;\Y)}+\|h(t,X)\|_{\LL_2(\U;\Y)}\right\}\le q(N)k(t),
			\end{align} 
			\begin{align}\label{A22}
			&\sup_{\|X\|_\X\le N} 
			\left\{\|g_\e(t,X)\|_\X +\|h_\e(t,X)\|_{\LL_2(\U;\X)} \right\} \le C_{\e,N}k(t),
			\end{align} 
			and
			\begin{align}\label{A23}
			&\sup_{\|X\|_\X,\|Y\|_\X\le N}\left\{{\bf 1}_{\{X\ne Y\}} \left(\frac{\|g_\e(t,X)-g_\e(t,Y)\|_\X}{\|X-Y\|_{\X}}
			+\frac{\|h_\e(t,X)-h_\e(t,Y)\|_{\LL_2(\U;\X)}}{\|X-Y\|_{\X}}\right)\right\} \le C_{\e,N}k(t)
			\end{align} 
			hold. Moreover, for any bounded sequence $\{X_\e\}\subset\X$ such that $X_\e\rightarrow X$ in $\Z$ and  for any $t>0$, we have 
			\begin{align}\label{A24}
			\lim_{\e\rightarrow0} \int_0^t\left|
			\tensor[_\Z]{\left\langle g_\e(t,X_\e(t'))-g(t,X(t')),\phi\right\rangle}{_{\Z^*}}
			\right|\,\dd t'=0\ \ \forall \,\phi\in \Z^*
			\end{align} 
			and
			\begin{align}\label{A25}
			\lim_{n\rightarrow\infty} \|h_\e(t,X_\e)-h(t,X)\|_{\LL_2(\U;\Z)}=0.
			\end{align} 
			Here  $	_\Z\langle\cdot,\cdot\rangle{_{\Z^*}}$ denotes the dual pairing in $\Z$.
			
			\item\label{growth gn hn}
			Let $g_\e$ and $h_\e$  as in \ref{gn hn condition}. For all $n\ge1$ and $(t,X)\in [0,\infty)\times \X$, we have 
			\begin{align}\label{A31}
			\sum_{i=1}^{\infty}\left|\left(h_\e(t,X)e_i,X\right)_{\X}\right|^2\leq k(t)f \big( \|X\|_{\V}\big)\|X\|^4_{\X}
			\end{align}
			and
			\begin{align}\label{A32}
			2\left(g_\e(t,X),X\right)_{\X}+\|h_\e(t,X)\|^2_{\LL_2(\U;\X)}\leq k(t)f(\|X\|_{\V}\big)\|X\|^2_{\X}.
			\end{align}

			\item\label{uniqueness requirement}
			For any $t\ge0$ and $N\geq1$, we have 
			\begin{align}\label{B11}
			\sup_{\|X\|_\X,\|Y\|_\X\le N}\left\{{\bf 1}_{\{X\ne Y\}} \frac{\|b(t,X)-b(t,Y)\|_\Z}{\|X-Y\|_{\Z}}	+\right\} \le q(N)k(t)
			\end{align} 
			and
			\begin{align}\label{B12}
			\sup_{\|X\|_\X,\|Y\|_\X\le N}\left\{{\bf 1}_{\{X\ne Y\}}  \frac{ 2\left(g(t,X)-g(t,Y),X-Y\right)_{\Z}+\|h(t,X)-h(t,Y)\|^2_{\LL_2(\U;\Z)}}{\|X-Y\|^2_{\Z}}\right\} \le q(N)k(t).
			\end{align}
			
			\item\label{continuity requirement}
			The embedding $\X\hookrightarrow\Z$ is dense, and
			there is a family of continuous linear operators $\{T_\e : \Z\to \X\}_{\e\in(0,1)}$  such that 
			\begin{equation}\label{B21}
			\|T_\e X\|_\X\le \|X\|_\X,\ \ \lim_{n\rightarrow\infty} \|T_\e  X-X\|_\X=0 \qquad  ( X\in \X)
			\end{equation} 
			and for all   $ t\geq0,\ N\ge1$ 
			\begin{equation}\label{B23}
			\sup_{\e\in(0,1),\|X\|_\X\le N} 2\left(T_\e  g(t,X), T_\e X\right)_{\X}
			+\|T_\e h(t,X)\|^2_{\LL_2(\U;\X)}\leq q(N)k(t),
			\end{equation}
			\begin{align}\label{B24}
			\sup_{\e\in(0,1),\|X\|_\X\le N} 
			\sum_{i=1}^{\infty}\left|\left(T_\e h(t,X)e_i,T_\e X\right)_{\X}\right|^2\leq q(N)k(t)
			\end{align}
			hold.
			\item	\label{blow-up requirement}
			There is a family  of continuous linear operators $\{Q_\e:\Z\to \X  \}_{\e \in(0,1)} $  such that \eqref{B21}  with $Q_\e $ replacing $T_\e $ 
			and

			\begin{align}\label{blow-up requirement 2}
			\sup_{\e\in(0,1)} 
			\sum_{i=1}^{\infty}\left|\left(Q_\e  h(t,X)e_i,Q_\e  X\right)_{\X}\right|^2\leq k(t)f\big(\|X\|_{\V}\big)\|X\|^2_{\X}
			\|Q_\e  X\|^2_{\X},
			\end{align}
			\begin{equation}\label{blow-up requirement 1}
			\sup_{\e\in(0,1)} 2\left(Q_\e  g(t,X), Q_\e  X\right)_{\X}
			+\|Q_\e  h(t,X)\|^2_{\LL_2(\U;\X)}\leq k(t)f \big( \|X\|_{\V} \big)\|X\|^2_{\X}
			\end{equation}
			hold true for $t\ge 0$.

		\end{enumerate}
	\end{Assumption}

	Note that for the singular mappings the constants $C_{\e,N}$ in Assumption \ref{Assum-A}  are  non-decreasing in $N$ for $\e$ fixed and explode for $\e\to 0$ with $N$ fixed.  
	Then we can state our main results for  the initial value problem \eqref{Abstact irregular SDE}. 
	
	\begin{Theorem}\label{Abstract irregular SDE: T}  Considering \eqref{Abstact irregular SDE}, we have the following results.
		
		\begin{enumerate} [label={\bf (\roman*)}]
			\item \label{Abstract irregular SDE: M}	Let Assumptions \ref{b in X}-\ref{growth gn hn} hold. Then, for any $\mu_0\in \Pm{2}(\X)$, \eqref{Abstact irregular SDE} has a local martingale solution $(\s,X,\tau)$ in the sense of Definition \ref{Martingale solution definition}.  
			
			\item\label{Abstract irregular SDE: P}  Let $\s=(\Omega, \mathcal{F},\p,\{\mathcal{F}_t\}_{t\geq0}, \W)$ be a fixed stochastic basis. If Assumptions \ref{b in X}-\ref{continuity requirement} hold, then for any $\mathcal{F}_0$-measurable random variable $X_0\in L^2(\Omega;\X)$,  \eqref{Abstact irregular SDE} has a local unique pathwise solution $(X,\tau)$, in the sense of Definition \ref{pathwise solution definition} such that
			\begin{equation}\label{L2 moment bound}
			X(\cdot\wedge \tau)\in L^2\left(\Omega; C\left([0,\infty);\X\right)\right).
			\end{equation} 
			
			\item\label{Abstract irregular SDE: P:time and blow-up} Let $(X,\tau^*)$ be the maximal solution to \eqref{Abstact irregular SDE}, in the sense of Definition \ref{maximal solution definition}, under \ref{b in X}-\ref{continuity requirement}.    If additionally  \ref{blow-up requirement}  holds true, then the  blow-up occurs in $\X$ as well as $\V$, i.e.
			\begin{equation}\label{Blow-up criterion}
			\textbf{1}_{\left\{\limsup_{t\rightarrow \tau^*}\|X\|_{\X}=\infty\right\}}=\textbf{1}_{\left\{\limsup_{t\rightarrow \tau^*}\|X\|_{\V}=\infty\right\}}\ \ \p-a.s.
			\end{equation}
		\end{enumerate}

	\end{Theorem}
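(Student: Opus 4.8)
The plan is to prove the three parts of Theorem \ref{Abstract irregular SDE: T} in order, since each builds on its predecessor. The overarching strategy is a regularization-and-compactness scheme: replace the singular mappings $g,h$ by the regular approximations $g_\e,h_\e$ from Assumption \ref{gn hn condition}, so that for each fixed $\e$ the regularized equation
\begin{equation*}
\dd X_\e=\big(b(t,X_\e)+g_\e(t,X_\e)\big)\dd t+h_\e(t,X_\e)\dd\W
\end{equation*}
has all coefficients \emph{invariant} in $\X$ (by \eqref{A22}) and locally Lipschitz in $\X$ (by \eqref{A12} and \eqref{A23}). For this entirely regular problem the classical theory applies to give, for each $\e$, a unique local pathwise solution $(X_\e,\tau_\e)$ in $\X$. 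First I would establish \emph{uniform-in-$\e$} a priori estimates: applying the classical It\^o formula \eqref{Classical Ito formula} in the Gelfand triple $\X\hookrightarrow\X\hookrightarrow\X$ to $\|X_\e\|_\X^2$, the cancellation/growth bounds \eqref{A31}--\eqref{A32} together with the BDG inequality and a stopping-time argument (stopping when $\|X_\e\|_\V$ exceeds a threshold, using $\Z\hookrightarrow\V$) yield a bound on $\E\sup_{t\le\tau}\|X_\e\|_\X^2$ on a common time horizon $\tau>0$ independent of $\e$. The key point is that \eqref{A31}--\eqref{A32} are stated with $f(\|X\|_\V)$ on the right, so the nonlinear growth is controlled by the weaker $\V$-norm, which is exactly what permits a uniform local existence time.

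For part \ref{Abstract irregular SDE: M} I would then pass to the limit $\e\to0$ by compactness. The uniform $\X$-bound plus the equation itself give uniform bounds on the time-regularity of $X_\e$ (e.g. fractional Sobolev or H\"older bounds in time valued in $\Z$), and since $\Y\hookrightarrow\hookrightarrow\Z$ the Aubin--Lions--Simon lemma produces tightness of the laws of $\{X_\e\}$ in $C([0,T];\Z)$. Invoking Prokhorov together with the Skorokhod representation theorem, I pass to a new probability space on which $X_\e\to X$ almost surely in $C([0,T];\Z)$. The convergence of the nonlinear terms is precisely governed by the approximation hypotheses \eqref{A13}, \eqref{A24}, \eqref{A25}: \eqref{A24} lets me identify the drift limit weakly in $\Z$ after testing against $\phi\in\Z^*$, and \eqml{A25} gives strong $\LL_2(\U;\Z)$ convergence of the diffusion so the stochastic integral passes to the limit. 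The uniform $\X$-bound is preserved under the a.s.\ limit (by weak lower semicontinuity / Fatou), so the limit process lives in $\X$, yielding a martingale solution in the sense of Definition \ref{Martingale solution definition}, continuous in $\Z$.

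For part \ref{Abstract irregular SDE: P} the plan is to upgrade to pathwise solutions on the \emph{fixed} basis via pathwise uniqueness plus a Gy\"ongy--Krylov (or Yamada--Watanabe-type) argument. Uniqueness comes from Assumption \ref{uniqueness requirement}: for two solutions $X,Y$ I estimate $\|X-Y\|_\Z^2$ using the It\^o formula in the $\Z$-triple, where \eqref{B11} controls the $b$-difference and \eqref{B12} supplies the crucial one-sided bound $2(g(t,X)-g(t,Y),X-Y)_\Z+\|h(t,X)-h(t,Y)\|_{\LL_2(\U;\Z)}^2\le q(N)k(t)\|X-Y\|_\Z^2$, whence Gr\"onwall gives $X=Y$ in $\Z$, hence in $\X$. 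The more delicate issue is recovering time-continuity in $\X$ (not merely $\Z$) and the bound \eqref{L2 moment bound}; here the classical It\^o formula is unavailable in the singular limit, so I would use the mollifier family $\{T_\e\}$ from Assumption \ref{continuity requirement}. Applying the valid It\^o formula to $\|T_\e X\|_\X^2$ — legitimate since $T_\e X\in\X$ with all the regularized terms meaningful — and using the uniform bounds \eqref{B23}--\eqref{B24}, I show $\{\|T_\e X\|_\X^2\}$ is a Cauchy-type / equicontinuous family, then let $\e\to0$ using \eqref{B21} to obtain that $t\mapsto\|X(t)\|_\X^2$ is a.s.\ continuous; combined with weak continuity this gives strong continuity in $\X$. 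I expect this step — producing $\X$-time-continuity from the truncated It\^o identities and the convergence $T_\e X\to X$ in $\X$ — to be the main obstacle, as it is the heart of the difficulty flagged in the introduction about the failure of the classical It\^o formula in the singular case.

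Finally, for part \ref{Abstract irregular SDE: P:time and blow-up} I would construct the maximal solution by the standard extension of local solutions up to the stopping time $\tau^*=\lim_n\tau_n$ with $\tau_n$ the first exit time of $\|X\|_\X$ from $[0,n]$, as in Definition \ref{maximal solution definition}. To prove the blow-up equivalence \eqref{Blow-up criterion}, one inclusion is trivial from $\X\hookrightarrow\V$. For the reverse — that $\V$-boundedness forces $\X$-boundedness — I would apply the It\^o formula to $\|Q_\e X\|_\X^2$ using the second mollifier family $\{Q_\e\}$ from Assumption \ref{blow-up requirement}, whose defining bounds \eqref{blow-up requirement 2}--\eqref{blow-up requirement 1} carry $f(\|X\|_\V)\|X\|_\X^2$ on the right. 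On the event $\{\limsup_{t\to\tau^*}\|X\|_\V<\infty\}$ the factor $f(\|X\|_\V)$ stays bounded, so a stochastic Gr\"onwall argument (after taking $\e\to0$ via \eqref{B21}) keeps $\|X\|_\X$ finite, contradicting $\sup_{t\le\tau_n}\|X\|_\X\ge n$ on $\{\tau^*<\infty\}$. This yields the a.s.\ coincidence of the two blow-up indicator sets.
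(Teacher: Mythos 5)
Your overall architecture coincides with the paper's: regularize $g,h$ by $g_\e,h_\e$, derive uniform $\X$-estimates from \eqref{A31}--\eqref{A32} via the It\^o formula and BDG, obtain tightness in $C([0,T];\Z)$ and pass to the limit with Prokhorov--Skorokhod using \eqref{A13}, \eqref{A24}, \eqref{A25}, upgrade to pathwise solutions by the $\Z$-uniqueness from \ref{uniqueness requirement} plus Gy\"ongy--Krylov, recover $\X$-continuity by applying It\^o to $\|T_\e X\|_\X^2$ and letting $\e\to0$, and prove \eqref{Blow-up criterion} with $Q_\e$ and a Gr\"onwall argument on $\{\limsup\|X\|_\V<\infty\}$. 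The one step that would not go through as you describe it is the uniform a priori estimate ``on a common time horizon $\tau>0$ independent of $\e$'' obtained by stopping when $\|X_\e\|_\V$ exceeds a threshold. The first exit time of $\|X_\e\|_\V$ is itself $\e$-dependent and random, so there is no single positive stopping time that works for all $\e$ simultaneously, and the subsequent compactness/Skorokhod argument needs all the laws to live on a fixed space $C([0,T];\Z)$ with a deterministic $T$. The paper avoids this by inserting the cut-off $\chi_R(\|X\|_\V)$ into the coefficients (problem \eqref{cut-off problem} and its regularization \eqref{approximation cut problem}): each $X_\e$ is then \emph{global}, the bounds \eqref{A31}--\eqref{A32} hold with $f(\|X_\e\|_\V)$ replaced by a constant $C_R$ on the whole of $[0,T]$, tightness is proved on $[0,T]$, and only at the very end is the local solution of the original equation produced by stopping the limit process at $\{\|\widetilde X\|_\V>R\}$. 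You should adopt this cut-off (or an equivalent device) before the limit passage, not after.

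Two smaller omissions relative to the paper's proof: to get \eqref{L2 moment bound} for general $X_0\in L^2(\Omega;\X)$ one must decompose $\Omega=\bigcup_k\Omega_k$ with $\Omega_k=\{k-1\le\|X_0\|_\X<k\}$, solve with bounded data $X_{0,k}$ and cut-off radius $R_k$ with $R_k^2>k^2+2$, and glue the solutions $\mathbf{1}_{\Omega_k}X_k$; your proposal does not address how the local existence time and the moment bound survive unbounded initial data. Also, for the $\X$-continuity step the paper does not show that $\|T_\e X\|_\X^2$ is Cauchy in $\e$; it proves a fourth-moment increment bound for $\|T_\e X(\cdot\wedge\tau_N)\|_\X^2$ uniform in $\e$, passes to the limit by Fatou, and invokes Kolmogorov's continuity criterion --- a cleaner route to the same conclusion that you may wish to follow.
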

	
	\begin{Remark}\label{Remark Assum}
		
		We first remark that the singular terms $g$ and $h$ are in general not monotone in the sense of  \cite{Pardoux-1972-CRAS,Prevot-Rockner-2007-book}. So,  the well-known  approximation scheme under a Gelfand triple developed for quasi-linear  SPDEs does not work for the present model.
		Motivated by \cite{Ren-Tang-Wang-2020-Arxiv}, 
		we will  employ a regularization argument to overcome this difficulty.
		Let us give some  explanations on  Assumption \ref{Assum-A} that makes  precise the required regularization procedure.
		
		\begin{itemize}
			\item The condition \ref{b in X} provides the local Lipschitz continuity  
			for the regular drift coefficient $b(t,X)$ and bounds  its growth.
			Assumption \ref{gn hn condition}  requires the local Lipschitz continuity  on the 
			approximations $g_\e$ and $h_\e$ of the singular terms $g$ and $h$, which together with \ref{b in X}  will ensure local-in-time existence for some approximate problem.  In Section \ref{Application to SALT} we will show how to construct 
			such approximations using mollifiers.   
			
			\item \ref{growth gn hn} can be viewed as a renormalization type condition in the following sense. 
			Formally speaking,  even though $g$ and $h$ are not  invariant with respect to $\X$  (hence $(g(t,X),X)_{\X}$ and $\|h(t,X)\|_{\LL_2(\U;\X)}$ may be infinite), we require that $(g(t,X),X)_{\X}+\|h(t,X)\|_{\LL_2(\U;\X)}$ can be controlled. In fact, \ref{growth gn hn}  specifies this relationship for $g_\e$ and $h_\e$ such that $(g_\e(t,X),X)_{\X}$ and $\|h_\e(t,X)\|_{\LL_2(\U;\X)}$ make sense.

			\item  Since $g$ and $h$ are singular, we need \ref{uniqueness requirement} on the joint space $\Z$  to guarantee pathwise uniqueness.
			
			\item As explained in the introduction, we  can \textit{not} use the It\^{o} formula \eqref{Classical Ito formula} to obtain the time continuity of the solution directly. This is why we need to assume \ref{continuity requirement} and \ref{blow-up requirement} to establish time continuity and blow-up criterion, respectively.    \ref{blow-up requirement} is stronger than \ref{continuity requirement} because we need both, the
			validity of the  It\^{o} formula and the growth condition. However,  the dense embedding $\X\hookrightarrow\Z$ is not necessary for deriving the  blow-up criterion. Moreover, in applications, usually one can take $T_\e =Q_\e $.

			\item In view of Assumption  \ref{Assum-A}, it is worthwhile noticing that the regular drift $b$ will \textit{not} be used to
			control the singular terms, i.e. our result  covers the case $b\equiv 0$, where both the drift and diffusion in \eqref{Abstact irregular SDE} are singular.  However, we assume that the problem \eqref{Abstact irregular SDE} has a regular part to cover more  ideal fluid models. 
		\end{itemize}

	\end{Remark}
	
	\begin{Remark}
		We remark that in  \cite{Debussche-Glatt-Temam-2011-PhyD}, an abstract fluid model involving a Stokes operator (viscous term) and a regular noise coefficient is studied. Here, we are able to  deal with ideal fluid models without viscosity and  noise  of transport type. Moreover, in  \cite{Debussche-Glatt-Temam-2011-PhyD}, the martingale solution exists under the condition that the initial measure has finite moment of order $r>8$ (See \cite[Theorem 6.1]{Debussche-Glatt-Temam-2011-PhyD}). In the present work,  we only require $r=2$, i.e., $\mu_0\in\Pm{2}(\X)$  in \ref{Abstract irregular SDE: M} in Theorem \ref{Abstract irregular SDE: T}.
	\end{Remark}

	\begin{Remark}
		We also remark that when the noise coefficient $h(t,X)$ is as regular as the solution $X$ and the singularity of \eqref{Abstact irregular SDE} only arises in $g$, namely 
		$b:[0,\infty)\times \X\rightarrow\X$, $h:[0,\infty)\times \X\rightarrow \LL_2(\U;\X)$ and $g:[0,\infty)\times \X\rightarrow\Z$,  one can also obtain a local theory as in Theorem \ref{Abstract irregular SDE: T} even under weaker conditions as in Assumption \ref{Assum-A}.
	\end{Remark}

	\subsection{Proof of \ref{Abstract irregular SDE: M} in Theorem \ref{Abstract irregular SDE: T}}\label{proof of Thm-M}
	
	For the sake of clarity, we split the proof into the following subsections.
	
	\subsubsection{Approximation scheme and uniform estimates}
	
	For $\mu_0\in\Pm{2}(\X)$, we first fix a stochastic basis $\s$ and a random variable $X_0$ such that the distribution law of $X_0$ is $\mu_0$.
	For any $R>1$, we let $\chi_R(x):[0,\infty)\rightarrow[0,1]$ be a $C^{\infty}$-function such that $\chi_R(x)=1$ for $x\in[0,R]$ and $\chi_R(x)=0$ for $x>2R$. Then we consider a cut-off version of \eqref{Abstact irregular SDE} given by 
	\begin{equation} \label{cut-off problem}
	\begin{aligned}
	{\rm d}X&=\chi^2_R\big(\|X\|_{\V}\big)\left[b(t,X)+g(t,X)\right]{\rm d}t+\chi_R\big(\|X\|_{\V}\big)h(t,X)\,{\rm d}\W,\\ 
	X(0)&=X_0.
	\end{aligned}
	\end{equation}
	We have not posed any structural properties like monotonicity on the singular mappings $g,h$  that ensure the existence of solutions for \eqref{cut-off problem}. Therefore we employ the 
	regular approximations $g_\e$ and $h_\e$ from Assumption \ref{Assum-A} which leads us to the regular  approximative version 
	\begin{equation} \label{approximation cut problem}
	\begin{aligned}
	{\rm d}X&=H_{1,\e}(t,X)\,{\rm d}t+H_{2,\e}(t,X)\,{\rm d}\W,\\ 
	H_{1,\e}(t,X)&=\chi^2_R\big(\|X\|_{\V}\big)\left(b(t,X)+g_\e(t,X)\right),\\
	H_{2,\e}(t,X)&=\chi_R\big(\|X\|_{\V}\big)h_\e(t,X),\\
	X(0)&=X_0.
	\end{aligned} 
	\end{equation}
	For \eqref{approximation cut problem} we can obtain the following global existence result. 
	\begin{Lemma}\label{Tightness}
		For $\mu_0\in\Pm{2}(\X)$, we fix a stochastic basis $\s$ and a $\mathcal{F}_0$-measurable random variable $X_0$ such that the distribution of $X_0$ is $\mu_0$.  
		Let $R>1$ be fixed.\\
		For each $\e\in(0,1)$, the problem \eqref{approximation cut problem} has a global solution $X_\e$. Moreover, for any sequence  $\{\e_n\}_{n\in\N}$ and  for any $T>0$, we have  that
		\begin{align}
		\nu_{\e_n}(\cdot)=\p\left\{(X_{\e_n},\W)\in\cdot\right\} \label{nu n define}
		\end{align}
		defines  a tight sequence in $\Pm{}\left(C([0,T];\Z)\times C([0,T];\U_0)\right)$.
	\end{Lemma}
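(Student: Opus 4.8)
The plan is to treat the two assertions in turn: global solvability of the regularized equation \eqref{approximation cut problem} for each fixed $\e$, followed by the uniform-in-$\e$ estimates that force tightness. For fixed $\e$ the coefficients $H_{1,\e}$ and $H_{2,\e}$ are genuinely $\X$-valued, since $b,g_\e\colon\X\to\X$ and $h_\e\colon\X\to\LL_2(\U;\X)$. As $\chi_R(\|\cdot\|_\V)$ is bounded and, through $\X\hookrightarrow\Z\hookrightarrow\V$, locally Lipschitz on $\X$, the local Lipschitz bounds \eqref{A12} and \eqref{A23} make $H_{1,\e},H_{2,\e}$ locally Lipschitz with linear growth on balls of $\X$; classical Hilbert-space SDE theory then provides a unique local pathwise solution $X_\e\in C([0,T];\X)$. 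To exclude finite-time blow-up I would apply the It\^o formula to $\|X_\e\|_\X^2$---legitimate here because all terms live in $\X$---and invoke the renormalization bounds \eqref{A31}, \eqref{A32} together with \eqref{A11}. The cut-off is what makes this close: on its support $\|X_\e\|_\V\le 2R$, so $\chi_R^2 f(\|X_\e\|_\V)\le f(2R)$, and the drift plus It\^o correction are dominated by $C_R\,k(t)\|X_\e\|_\X^2$ while the martingale part has quadratic variation at most $C_R\,k(t)\|X_\e\|_\X^4$. A Gronwall estimate combined with the BDG inequality then bounds $\E\sup_{[0,T]}\|X_\e\|_\X^2$ and yields a global solution. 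The decisive point is that this energy bound is uniform in $\e$, since the right-hand sides of \eqref{A31}, \eqref{A32}, \eqref{A11} carry no $\e$-dependence (the constants $C_{\e,N}$ of \eqref{A22}, \eqref{A23} entering only the local-existence step), so that
\[
\sup_{\e\in(0,1)}\E\sup_{t\in[0,T]}\|X_\e(t)\|_\X^2\le C\big(R,T,\E\|X_0\|_\X^2\big).
\]

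For tightness of $\{X_{\e_n}\}$ in $C([0,T];\Z)$ the natural device is the compact embedding $\Y\hookrightarrow\hookrightarrow\Z$ combined with Arzel\`a--Ascoli, and I expect the genuine obstacle to lie here: the uniform $\Z$- and $\LL_2(\U;\Y)$-bounds \eqref{A21} for $g_\e,h_\e$ are phrased through $q(\|X_\e\|_\X)$, not through the cut-off $\V$-norm, while $\mu_0\in\Pm{2}(\X)$ supplies only a second moment, so $\E\,q(\|X_\e\|_\X)^p$ cannot be controlled uniformly for large $p$. I would resolve this by localization. Given $\eta>0$, the uniform bound and Chebyshev's inequality furnish $M$ with $\sup_\e\p\{\sup_{[0,T]}\|X_\e\|_\X>M\}\le\eta$. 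Setting $\tau_M^\e=\inf\{t:\|X_\e(t)\|_\X>M\}\wedge T$, on $[0,\tau_M^\e]$ the estimates \eqref{A21} and \eqref{A11} (using $\Y\hookrightarrow\Z$) bound $\|H_{1,\e}(t,X_\e)\|_\Z$ and $\|H_{2,\e}(t,X_\e)\|_{\LL_2(\U;\Z)}$ by a constant $C(M,R,T)$ independent of $\e$. The BDG inequality then gives Kolmogorov-type increments $\E\|X_\e(t\wedge\tau_M^\e)-X_\e(s\wedge\tau_M^\e)\|_\Z^p\le C(M)\,|t-s|^{p/2}$ for every $p\ge2$, hence a uniform $C^\gamma([0,T];\Z)$ bound with $\gamma<\tfrac12-\tfrac1p$; together with the $\Y$-bound $\|X_\e(\cdot\wedge\tau_M^\e)\|_{L^\infty([0,T];\Y)}\le CM$, the compact embedding makes a suitable ball of $L^\infty([0,T];\Y)\cap C^\gamma([0,T];\Z)$ a compact subset $K$ of $C([0,T];\Z)$. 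Since $X_\e\equiv X_\e(\cdot\wedge\tau_M^\e)$ on $\{\tau_M^\e=T\}$ and $\p\{\tau_M^\e<T\}\le\eta$, applying Chebyshev to the stopped process places the law of $X_\e$ on such a compact set with mass at least $1-C\eta$, uniformly in $\e$, which is precisely tightness.

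Finally, the law of $\W$ is independent of $\e$ and is a single Borel probability measure on the Polish space $C([0,T];\U_0)$, hence tight; pairing the tight spatial marginals with this fixed noise marginal on the product space yields tightness of $\{\nu_{\e_n}\}$ in $\Pm{}\big(C([0,T];\Z)\times C([0,T];\U_0)\big)$, completing the plan. The localization step of the second paragraph is the crux, as it is the device that reconciles the $\X$-dependent growth in \eqref{A21} with the mere second-moment hypothesis on the initial law.
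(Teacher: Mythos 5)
Your proposal is correct and follows essentially the same route as the paper: global existence via the It\^o formula for $\|X_\e\|_{\X}^2$ with \ref{growth gn hn}, BDG and Gr\"onwall's inequality under the cut-off, the resulting $\e$-uniform second-moment bound, and then tightness by localizing with the stopping times $\tau^\e_M$ to control the increments of $X_\e$ in $\Z$ uniformly in $\e$. The only cosmetic difference is that you package the final step as an explicit compact set (a H\"older ball intersected with a bounded set of $\Y$, via Arzel\`a--Ascoli), whereas the paper combines the pointwise tightness of $\p(X_\e(t)\in\cdot)$ in $\Pm{}(\Z)$ with the uniform equicontinuity estimate and cites a standard tightness criterion; these are equivalent.
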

	
	\begin{proof}
		From \ref{b in X}, \ref{gn hn condition}, it is easy to see that for each $n\ge1$, $H_{1,\e}(t,X)$ and $H_{2,\e}(t,X)$ are locally Lipschitz in $X\in\X$. Moreover, the growth of $\|H_{1,\e}(\cdot,X)\|_{\X}$ and $\|H_{2,\e}(\cdot,X)\|_{\LL_{2}(\U;\X)}$ is  controlled by the 
		continuous function $k(t)$. Therefore, for each $\e \in (0,1)$, there is a stopping time $\tau_\e^*>0$ almost surely such that the problem
		\eqref{approximation cut problem}  has a unique solution $X_\e\in  L^2\left(\Omega;C([0,\tau_\e^*);\X)\right)$, see \cite{Leha-Ritter-1985-MA} or \cite[Theorem 5.1.1]{Kallianpur-Xiong-1995-book}. Next, we prove that the solution is actually a global solution.  Using the It\^{o} formula in $\X$ for the regular mappings $g_\e, h_\e $, we find
		\begin{align}
		{\rm d}\|X_\e\|^2_{\X}
		=\,&2
		\sum_{k=1}^{\infty}\chi_R\big(\|X_\e\|_{\V}\big)\left(h_\e(t,X_\e)e_k,X_\e\right)_{\X}\,{\rm d}W_k
		+2\chi^2_R \big(\|X_\e\|_{\V}\big)
		\left(b(t,X_\e),X_\e\right)_{\X}\,{\rm d}t\nonumber\\
		&+2\chi^2_R\big(\|X_\e\|_{\V}\big)
		\left(g_\e(t,X_\e),X_\e\right)_{\X} \, {\rm d}t+
		\chi^2_R \big(\|X_\e\|_{\V}\big)\|h_\e(t,X_\e)\|_{\LL_2(\U;\X)}^2\, {\rm d}t\nonumber\\
		=:&\sum_{k=1}^{\infty} J_{1,k}\,{\rm d}W_k+\sum_{i=2}^4J_i \, {\rm d}t.
		\label{X n 2}
		\end{align}
		For any $T>0$, we 
		integrate \eqref{X n 2}, take a supremum for $t\in[0,T]$ and then use the BDG inequality, \ref{b in X} and \ref{growth gn hn} to find a
		constant $C=C_R>0$ depending on $R$ such that 
		\begin{align*}
		\E\sup_{t\in[0,T]}\|X_\e\|^2_{\X}-\E\|X_0\|^2_{\X}
		\lesssim\,& \E\left(\int_0^T\sum_{k=1}^\infty
		J^2_{1,k}
		\dd t\right)^{\frac12}
		+\int_0^T|J_2|\,{\rm d}t+\int_0^T
		\left|J_3+J_4\right|\,{\rm d}t\notag\\
		\lesssim\,&\E\left(\int_0^Tk(t)\chi^2_R(\|X_\e\|_{\V})
		f(\|X_\e\|_{\V})\|X_\e\|^4_{\X}\dd t\right)^{\frac12}\notag\\
		&+\int_0^Tk(t)\chi^2_R(\|X_\e\|_{\V})f(\|X_\e\|_{\V})\|X_\e\|^2_{\X}\,{\rm d}t\nonumber\\
		\leq\,&C_{R}\E\left(\sup_{t\in[0,T]}\|X_\e\|^2_{\X}\int_0^Tk(t)
		\|X_\e\|^2_{\X}\dd t\right)^{\frac12}+C_{R}\int_0^Tk(t)\|X_\e\|_{\X}^{2}\,{\rm d}t\notag\\
		\leq\,&\frac{1}{2}\E\sup_{t\in[0,T]}\|X_\e\|_{\X}^{2}+C_{R}\int_0^Tk(t)\E\sup_{t'\in[0,t]}\|X_\e\|_{\X}^{2}\,{\rm d}t.
		\end{align*}
		Via Gr\"{o}nwall's inequality, we arrive at the $\e$-independent bound
		\begin{align}
		\sup_{\e\in(0,1)}\E\sup_{t\in[0,T]}\|X_\e(t)\|^2_{\X}
		\leq C(R,X_0,T).\label{Xn uniform bound}
		\end{align}
		Since $T>0$ can be chosen arbitrarily, we see in particular that    $X_\e$ is a global solution for each $\e \in (0,1)$.\\
		Moreover, the bound \eqref{Xn uniform bound} implies that 
		the stopping times 
		\begin{equation}\label{tau N n}
		\tau_N^{\e}:=\inf\{t\ge 0: \sup_{t'\in[0,t]}\|X_\e\|_{\X}\ge N\},\ N\ge1,\ \e\in(0,1)
		\end{equation}
		satisfy
		\begin{equation}\label{tau N n<T prob} 
		\p(\tau_N^{\e}<T) \le \p\left(\sup_{t\in[0,T]}\|X_\e\|_{\X}\ge N\right)\leq\frac{C(R,X_0,T)} {N^2}.
		\end{equation}
		Now we turn to prove the tightness result  on the Borel measure in  \eqref{nu n define}. 
		For any given  $\delta\in ( 0,1)$, we get that
		\begin{align}
		& \hspace*{-0.5cm}\E\sup_{[t_1,t_2]\subset [0,T],t_2-t_1<\delta}\left(1\wedge\|X_\e(t_2)-X_\e(t_1)\|_{\Z}\right)\notag\\
		\leq\,&\E\left(\sup_{[t_1,t_2]\subset [0,T],t_2-t_1<\delta}\left(1\wedge\|X_\e(t_2)-X_\e(t_1)\|_{\Z}\right)
		{\mathbf 1}_{\{\tau^{\e}_{N}<T\}}\right)\notag\\
		&+\E\left(\sup_{[t_1,t_2]\subset [0,T],t_2-t_1<\delta}\left(1\wedge\|X_\e(t_2)-X_\e(t_1)\|_{\Z}\right)
		{\mathbf 1}_{\{\tau^{\e}_{N}\geq T\}}\right)\notag\\
		\leq\,& \p\{\tau^{\e}_{N}<T\}
		+\E\left(\sup_{[t_1,t_2]\subset [0,T\wedge \tau^{\e}_{N}],t_2-t_1<\delta}
		\left(1\wedge\|X_\e(t_2)-X_\e(t_1)\|_{\Z}\right)
		{\mathbf 1}_{\{\tau^{\e}_{N}\geq T\}}\right)\notag\\
		\leq\,& \frac{C(R,X_0,T)} {N^2}
		+\E\left(\sup_{[t_1,t_2]\subset [0,T\wedge \tau^{\e}_{N}],t_2-t_1<\delta}
		\left(1\wedge\|{\mathbf 1}_{\{\tau^{\e}_{N}\geq T\}}X_\e(t_2)-{\mathbf 1}_{\{\tau^{\e}_{N}\geq T\}}X_\e(t_1)\|_{\Z}\right)
		\right)\label{continuity 2 parts}
		\end{align}
		holds. Note that we used   the $\e$-independent bound  \eqref{tau N n<T prob} for the last inequality.
		To estimate the expectation 
		term in \eqref{continuity 2 parts} we utilize the approximative problem  \eqref{approximation cut problem} directly. We 
		start with the drift term $H_{1,\e}$.
		On account of \eqref{tau N n}, \ref{b in X} and \ref{gn hn condition} and the BDG inequality, there are a non-decreasing,  locally bounded function 
		$a(\cdot) \in C\left([0,+\infty);[0,+\infty)\right)$ and a constant $C>0$ independent of $\e$ such that we have 
		\begin{align}\label{est1}
		& \hspace*{-1cm}\E\left\|\int_{t_1}^{t_2}{\mathbf 1}_{\{\tau^{\e}_{N}\geq T\}}H_{1,\e}(t',X_\e(t')\,{\rm d}t'\right\|_{\Z}\notag\\[1.5ex]
		\leq\,& |t_2-t_1|\E
		\sup_{t\in[0,T\wedge\tau_N^{\e}]}\|H_{1,\e}(t,X_\e(t)\|_{\Z}\notag\\
		\leq\,& C|t_2-t_1|\E\sup_{t\in[0,T\wedge\tau_N^{\e}]}
		\left(\chi^2_R \big(\|X_\e\|_{\V}\big)
		k(t)f(\|X_\e\|_{\V})\|X_\e\|_{\X}
		+\chi^2_R\big(\|X_\e\|_{\V}\big)\|g_\e(X_\e)\|_{Z}\right)\notag\\
		\leq\,& Ck(T)|t_2-t_1|\E\sup_{t\in[0,T]}\left(f(CN)N+q(N)\right)
		\leq Ca(N)k(T)|t_2-t_1|.
		\end{align}
		For the diffusion operator $H_{2,\e}$ and the stochastic integral, the bound  \eqref{tau N n}, \ref{gn hn condition} and the BDG inequality  imply 
		\begin{align}\label{est2}
		&\hspace*{-1.0cm}\E\left(\left\|\int_{t_1}^{t_2}{\mathbf 1}_{\{\tau^{\e}_{N}\geq T\}}H_{2,\e}(t',X_\e(t') {\rm d}\W
		\right\|_{\Z}\right)\\
		\leq\, &\E\left(\sup_{t_*\in[t_1,t_2]}\left\|\int_{t_1}^{t_*}
		{\mathbf 1}_{\{\tau^{\e}_{N}\geq T\}}H_{2,\e}(t',X_\e(t')) {\rm d}\W
		\right\|_{\Z}\right)\notag\\
		\leq\,& C \E\left(\int_{t_1}^{t_2} \|{\mathbf 1}_{\{\tau^{\e}_{N}\geq T\}}H_{2,\e}(t',X_\e(t')\|^2_{\LL_2(\U; \Z)} {\rm d}\tau\right)^\frac{1}{2}\notag\\
		\leq\,& C|t_2-t_1|^{\frac{1}{2}}\E \sup_{t\in[0,T\wedge\tau_N^{\e}]}\left(q(N)k(t)\right)\notag\\
		\leq\,& Ca(N)k(T)|t_2-t_1|^{\frac{1}{2}}.
		\end{align}
		Combining the  estimates \eqref{est1}, \eqref{est2}, for any $\delta\in(0,1)$, one has
		\begin{align*}
		&\hspace*{-1.0cm}\E\sup_{[t_1,t_2]\subset [0,T\wedge \tau^{\e}_{N}],t_2-t_1<\delta}\|{\mathbf 1}_{\{\tau^{\e}_{N}\geq T\}}X_\e(t_2)-{\mathbf 1}_{\{\tau^{\e}_{N}\geq T\}}X_\e(t_1)\|_{\Z}\\
		\leq \,&
		C\E\sup_{[t_1,t_2]\subset [0,T\wedge \tau^{\e}_{N}],t_2-t_1<\delta}\left\|\int_{t_1}^{t_2} {\mathbf 1}_{\{\tau^{\e}_{N}\geq T\}}H_{1,\e}(t',X_\e(t'))\,{\rm d}t'\right\|_{\Z}\notag\\
		&+C\E\sup_{[t_1,t_2]\subset [0,T\wedge \tau^{\e}_{N}],t_2-t_1<\delta}\left\|\int_{t_1}^{t_2} {\mathbf 1}_{\{\tau^{\e}_{N}\geq T\}}H_{2,\e}(t',X_\e(t'))\,{\rm d}\W\right\|_{\Z}\\
		\leq\, & Ca(N)k(T)\delta^{\frac{1}{2}}.
		\end{align*}
		Therefore, returning to \eqref{continuity 2 parts},    the last  estimate  implies that for all $\delta\in(0,1)$,
		\begin{align*}
		\E\sup_{[t_1,t_2]\subset [0,T],t_2-t_1<\delta}\left(1\wedge\|X_\e(t_2)-X_\e(t_1)\|_{\Z}\right) 
		\leq\,& \inf_{N\ge1}\left\{\frac{C(R,X_0,T)} {N^2}+Ca(N)k(T)\delta^{\frac{1}{2}}\right\}.
		\end{align*}
		Because $a(\cdot)$ is non-decreasing, we have
		\begin{align*}
		\lim_{\delta\rightarrow 0}\sup_{\e\in(0,1)}
		\E\sup_{[t_1,t_2]\subset [0,T],t_2-t_1<\delta} \|X_\e(t_2)-X_\e(t_1)\|_{\Z} 
		=0.
		\end{align*}
		Thus, we obtain that, for any $\delta>0$, the limit
		\begin{align}
		\lim_{\delta\rightarrow 0}\sup_{\e\in(0,1)} 
		\p\left(\sup_{[t_1,t_2]\subset [0,T],t_2-t_1<\delta} \|X_\e(t_2)-X_\e(t_1)\|_{\Z}>\delta\right) 
		=0\label{equi continuity}
		\end{align}
		holds.
		Since $\X\hookrightarrow\hookrightarrow \Z$, for each $t\ge0$, $\p(X_\e(t)\in\cdot)$ is tight in $\Pm{}(\Z)$. This together with \eqref{equi continuity} means  for any vanishing sequence $\{\e_n\}_{n\in\N}$ that (cf. \cite[Theorem 3.17]{Gawarecki-Mandrekar-2010-Springer})
		$$\mu_{\e_n}(\cdot)=\p\left\{X_{\e_n}\in\cdot\right\}$$ is a tight sequence in $\Pm{}\left(C([0,T];\Z)\right)$. On the other hand, since $\W$ stays unchanged, $\nu_{\e_n}$ defined in \eqref{nu n define} is also tight.
	\end{proof}

	\subsubsection{Stochastic compactness}  On the basis of Lemma \ref{Tightness} and the weak stochastic compactness theory we can now characterize the 
	convergence of the sequence $\{X_\e\}$ obtaining global-in-time results.
	\begin{Lemma}\label{convergence}
		Let $R>1$, $T>0$. The sequence 
		$\{\nu_{\e_n}\}$ defined in Lemma \ref{Tightness} has a weakly convergent subsequence, still denoted by $\{\nu_\e\}$, with limit measure $\nu$.
		There is a probability space $\left(\widetilde{\Omega}, \widetilde{\mathcal{F}},\widetilde{\p}\right)$ on which there is a sequence of  random variables $\left(\widetilde{X_{\e}},\widetilde{\W_{\e}}\right)$ and a pair $\left(\widetilde{X},\widetilde{\W}\right)$ such that we have 
		\begin{align}\label{same distribution}
		\widetilde{\p}\left\{\big(\widetilde{X_{\e}},\widetilde{\W_{\e}}\big)\in\cdot\right\}
		=\nu_{n}(\cdot),\ \ \widetilde{\p}\left\{\big(\widetilde{X},\widetilde{\W}\big)\in\cdot\right\}=\nu(\cdot),	
		\end{align}
		and
		\begin{align}
		\widetilde{X_{\e}}\rightarrow \widetilde{X}\  &{\rm in}\ C\left([0,T];\Z\right) \ \ \text{and}\ \ \
		\widetilde{\W_{\e}}\rightarrow \widetilde{\W}\  {\rm in}\ C\left([0,T];\U_0\right) \ \ \widetilde{\p}-a.s.\label{a.s. convergence}
		\end{align}
		Moreover, for $t\in [0,T]$, the following results hold.
		\begin{itemize}
			\item[(i)] $\widetilde{\W_{\e}}$ is a cylindrical Wiener process with respect to
			$\widetilde{\mathcal{F}^{\e}_t}
			=\sigma\left\{\widetilde{X_{\e}}(\tau),\widetilde{\W_{\e}}(\tau)\right\}_{\tau\in[0,t]}$.
			\item[(ii)] $\widetilde{\W}$ is a cylindrical Wiener process with respect to $\widetilde{\mathcal{F}_t}
			=\sigma\left\{\widetilde{X}(\tau),\widetilde{\W}(\tau)\right\}_{\tau\in[0,t]}$.
			\item[(iii)] On $\bigg(\widetilde{\Omega}, \widetilde{\mathcal{F}},\widetilde{\p},
			\left\{\widetilde{\mathcal{F}^{\e}_t}\right\}_{t\geq0}\bigg)$,
			$\widetilde{\p}-a.s.$ we have
			\begin{equation} \label{new approximate problem}
			\widetilde{X_\e}(t)-\widetilde{X_\e}(0)
			=\int_0^t	\chi^2_R(\|\widetilde{X_\e}\|_{\V})\left[b(t',\widetilde{X_\e})+g_\e(t',\widetilde{X_\e})\right] {\rm d}t'+\int_0^t \chi_R(\|\widetilde{X_\e}\|_{\V})h_\e(t',\widetilde{X_\e}) \,{\rm d}\widetilde{\W_{\e}}.
			\end{equation}
		\end{itemize}
	\end{Lemma}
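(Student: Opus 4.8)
The plan is to combine Prokhorov's theorem with a Skorokhod-type representation, and then to transfer the approximate equation \eqref{approximation cut problem} from the original to the new probability space by a martingale identification argument. First, by Lemma \ref{Tightness} the family $\{\nu_{\e_n}\}$ is tight on the separable complete metric space $C([0,T];\Z)\times C([0,T];\U_0)$, so Prokhorov's theorem yields a subsequence (not relabeled) converging weakly to some $\nu$. Applying the Skorokhod representation theorem then produces the probability space $(\widetilde{\Omega},\widetilde{\mathcal{F}},\widetilde{\p})$ carrying random variables $(\widetilde{X_{\e}},\widetilde{\W_{\e}})$ and a limit $(\widetilde{X},\widetilde{\W})$ with the prescribed laws \eqref{same distribution} and the almost sure convergence \eqref{a.s. convergence}.

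Next I would establish the Wiener-process claims. Being a cylindrical Wiener process adapted to a given filtration is characterized by the martingale property of $\widetilde{\W_{\e}}$ and of $(\widetilde{\W_{\e}}(t)e_k)(\widetilde{\W_{\e}}(t)e_j)-\delta_{kj}\,t$ tested against the basis $\{e_k\}$; these are statements about the joint law of $(\widetilde{X_{\e}},\widetilde{\W_{\e}})$, so they carry over from $(X_{\e},\W)$ through \eqref{same distribution} (recall $\W$ is a Wiener process with respect to the original basis, hence also with respect to the smaller filtration $\sigma\{X_{\e}(\tau),\W(\tau)\}_{\tau\le t}$). This gives (i). Claim (ii) follows by letting $\e\to0$ in these martingale identities, using the a.s.\ convergence \eqref{a.s. convergence} together with the uniform integrability supplied by the moment bounds; equivalently, a limit of Wiener processes is again a Wiener process by L\'evy's characterization.

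The main work is in (iii). Since $X\mapsto\sup_{t\in[0,T]}\|X(t)\|_{\X}^2$ is a nonnegative lower semicontinuous functional on $C([0,T];\Z)$, the law identity \eqref{same distribution} together with \eqref{Xn uniform bound} yields the same uniform bound $\widetilde{\E}\sup_{t\in[0,T]}\|\widetilde{X_{\e}}\|_{\X}^2\le C(R,X_0,T)$; in particular $\widetilde{X_{\e}}$ is $\X$-valued and the drift Bochner integral in \eqref{new approximate problem} is well defined. To identify the equation I would set
\[
\widetilde{M_{\e}}(t):=\widetilde{X_{\e}}(t)-\widetilde{X_{\e}}(0)-\int_0^t\chi_R^2\big(\|\widetilde{X_{\e}}\|_{\V}\big)\big(b(t',\widetilde{X_{\e}})+g_{\e}(t',\widetilde{X_{\e}})\big)\,\dd t',
\]
which, by the transferred $\X$-bound and the continuity and growth in \ref{b in X}, \ref{gn hn condition}, is a continuous functional of the path $\widetilde{X_{\e}}\in C([0,T];\Z)$. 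For the original pair the analogous process equals $\int_0^t\chi_R h_{\e}\,\dd\W$, hence it is a square-integrable $\widetilde{\mathcal{F}^{\e}_t}$-martingale whose tensor quadratic variation reproduces $\int_0^t\chi_R^2(h_{\e}h_{\e}^*)(t',X_{\e})\,\dd t'$ and whose cross variation with $\W$ reproduces the coefficient $\chi_R h_{\e}(t',X_{\e})$. Each of these is an expectation of a bounded continuous functional of $(X_{\e},\W)$ (after the standard truncation), so by \eqref{same distribution} the same martingale and (cross-)variation identities hold for $(\widetilde{M_{\e}},\widetilde{\W_{\e}})$. A martingale representation theorem (as in \cite{Breit-Feireisl-Hofmanova-2018-Book,Gawarecki-Mandrekar-2010-Springer}) then identifies $\widetilde{M_{\e}}(t)=\int_0^t\chi_R(\|\widetilde{X_{\e}}\|_{\V})h_{\e}(t',\widetilde{X_{\e}})\,\dd\widetilde{\W_{\e}}$, which is exactly \eqref{new approximate problem}.

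The delicate point, and the step I expect to be the main obstacle, is the transfer of the martingale and quadratic-variation identities under equality in law: the stochastic integral is \emph{not} a continuous function of the driving noise, so one cannot simply compose it with the Skorokhod convergence. The remedy is precisely to encode the martingale and variation properties through the continuous path functional $\widetilde{M_{\e}}$ rather than through the stochastic integral, and to verify the defining conditional-expectation identities against finitely many basis directions $e_k$, where joint-law equality applies directly. A second subtlety is that the convergence only holds in the weaker space $\Z$ while the coefficients act on $\X$; this is controlled by the transferred $\X$-bound above, which keeps every $\Z$-valued integral well defined and justifies the limit passages within each approximate equation.
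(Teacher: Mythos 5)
Your proposal is correct and follows essentially the same route as the paper: the paper obtains \eqref{a.s. convergence} from Lemma \ref{Tightness} together with the Prokhorov and Skorokhod theorems, and then simply cites \cite[Theorem 2.1.35, Corollary 2.1.36 and Theorem 2.9.1]{Breit-Feireisl-Hofmanova-2018-Book} and \cite[p.~282]{Bensoussan-1995-AAM} for (i)--(iii), whereas you spell out the content of those citations (the martingale characterization of the cylindrical Wiener process and the martingale/cross-variation identification of the stochastic integral under equality in law). Your remarks on the two delicate points --- that the stochastic integral is not a continuous path functional and that the $\X$-bound must be transferred by lower semicontinuity before the coefficients make sense --- are exactly the issues the cited results are designed to handle.
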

	
	\begin{proof}
		The existence of the sequence $\big(\widetilde{X_{\e}},\widetilde{\W_{\e}}\big)$ satisfying \eqref{a.s. convergence} is a consequence of Lemma \ref{Tightness} and Theorems \ref{Prokhorov Theorem} and \ref{Skorokhod Theorem}.
		Besides,
		\cite[Theorem 2.1.35 and Corollary 2.1.36]{Breit-Feireisl-Hofmanova-2018-Book} imply that 
		$\widetilde{\W_{\e}}$ and $\widetilde{\W}$ are cylindrical Wiener processes relative to
		$\widetilde{\mathcal{F}^{\e}_t}
		=\sigma\big\{\widetilde{X_\e}(\tau),\widetilde{\W_{\e}}(\tau)\big\}_{\tau\in[0,t]}$ and
		$\widetilde{\mathcal{F}_t}
		=\sigma
		\big\{\widetilde{X}(\tau),\widetilde{\W}(\tau)\big\}_{\tau\in[0,t]}$, respectively.   As in \cite[page 282]{Bensoussan-1995-AAM} or \cite[Theorem 2.9.1]{Breit-Feireisl-Hofmanova-2018-Book}
		one can find that $\big(\widetilde{X_{\e}},\widetilde{\W_{\e}}\big)$ relative to  $\big\{\widetilde{\mathcal{F}^{\e}_t}\big\}_{t\geq0}$ satisfies  \eqref{new approximate problem} $\widetilde{\p}-a.s.$
	\end{proof}
	
	\subsubsection{Concluding the  proof of \ref{Abstract irregular SDE: M} in Theorem \ref{Abstract irregular SDE: T}}\label{prove M solution}
	To begin with, we  notice that the embedding $\X\hookrightarrow\Z$ is continuous, which means there exist    continuous maps $\pi_m: \Z\to \X,\ m\ge 1$ such that
	$$ \|\pi_m x\|_\X \le \|x\|_\X,\ \  \lim_{m\rightarrow\infty} \|\pi_m x\|_\X = \|x\|_\X,\ \ x\in \Z,$$  where $\|x\|_\X:=\infty$ if $x\notin\X.$ 
	This,  together with \eqref{Xn uniform bound},  \eqref{same distribution} and Fatou's lemma, yields 
	\begin{align}\label{solution M estimate}
	\widetilde{\E}\sup_{t\in[0,T]} \|\widetilde{X}\|_{\X}^2
	\leq\,&
	\liminf_{m\rightarrow \infty}
	\widetilde{\E}\sup_{t\in[0,T]} \|\pi_m\widetilde{X}\|_{\X}^2\notag\\
	\leq\,&
	\liminf_{m\rightarrow \infty}
	\liminf_{\e\rightarrow 0}
	\widetilde{\E}\sup_{t\in[0,T]} \|\pi_m\widetilde{X_\e}\|_{\X}^2\notag\\
	\leq\,& \liminf_{m\rightarrow \infty}
	\liminf_{\e\rightarrow 0} \E\sup_{t\in[0,T]} \|X_\e\|_{\X}^2
	<C(R,X_0,T).
	\end{align} 	
	Using \eqref{a.s. convergence}, \eqref{solution M estimate}, $\X\hookrightarrow\V$, \ref{gn hn condition} and Lemma \ref{stochastic integral convergence} (up to further subsequence) in \eqref{new approximate problem},  we obtain that
	$$\int_0^t \chi_R\big(\|\widetilde{X_\e}\|_{\V}\big)h_\e(t,\widetilde{X_\e}) \,{\rm d}\widetilde{\W_{\e}}
	\xrightarrow{\e\rightarrow 0}
	\int_0^t \chi_R\big(\|\widetilde{X}\|_{\V}\big)h(t,\widetilde{X}) \,{\rm d}\widetilde{\W}\ 
	\text { in } L^{2}(0, T ; \Z)\ \  \p-a.s.$$
	As before, it follows from \eqref{a.s. convergence}, \eqref{solution M estimate}, $\X\hookrightarrow\V$ and \ref{gn hn condition}  that for any $t\in[0,T]$ and $\phi\in\Z^*$,
	$$\int_0^t\chi^2_R\big(\|\widetilde{X}\|_{\V}\big)\tensor[_\Z]{\left\langle b(s,\widetilde{X_\e}(s))-b(s,\widetilde{X}(s))+ g_\e(s,\widetilde{X_\e}(s))-g(s,\widetilde{X}(s)),\phi\right\rangle}{_{\Z^*}}\dd s\xrightarrow{\e\rightarrow0}0,
	\ \ \p-a.s.$$
	Therefore we derive   that for all $\phi\in\Z^*$ and ${\rm d}t\otimes\widetilde{\p}-a.s.$,
	\begin{align*}
	&\hspace*{-0.5cm}\tensor[_\Z]{\left\langle\widetilde{X}(t),\phi\right\rangle}{_{\Z^*}}-
	\tensor[_\Z]{\left\langle\widetilde{X}(0),\phi\right\rangle}{_{\Z^*}}\\
	=\,&\int_0^t\chi^2_R\big(\|\widetilde{X}\|_{\V}\big)\tensor[_\Z]{\left\langle b(s,\widetilde{X}(s)) +g(s,\widetilde{X_\e}(s)),\phi\right\rangle}{_{\Z^*}}\dd s
	+\tensor[_\Z]{\left\langle\int_0^t \chi_R\big(\|\widetilde{X}\|_{\V}\big)h(t,\widetilde{X})\, {\rm d}\widetilde{\W},\phi\right\rangle}{_{\Z^*}}.
	\end{align*}
	Due to \eqref{solution M estimate}, \ref{b in X} and \ref{gn hn condition}, we see that 
	$t\mapsto\int_0^t\chi_R\big(\|\widetilde{X}\|_{\V}\big)h(t',\widetilde{X}(t')) \, {\rm d}\widetilde{\W}$ is a local continuous martingale  on $\Y\subset\Z$, and that $t\mapsto\int_0^t\chi^2_R\big(\|\widetilde{X}\|_{\V}\big)\left[b(t',\widetilde{X}(t'))+g(t',\widetilde{X}(t'))\right]\, {\rm d}t'$ is a continuous process on $\Z$ as well. Hence, we obtain that $\widetilde{X}$ is a global martingale solution to \eqref{cut-off problem}.
	Moreover, \eqref{a.s. convergence} and \eqref{solution M estimate} imply  that $\widetilde{X}\in L^2\big(\widetilde{\Omega};L^\infty(0,T;\X)\cap C([0,T];\Z)\big)$ holds.  Define
	$$\widetilde{\tau}=\inf\left\{t\ge0:\|\widetilde{X}(t)\|_{\V}>R\right\},$$
	then we see that $\left(\widetilde{\s},\widetilde{X},\widetilde{\tau}\right)$ is a local martingale solution to \eqref{Abstact irregular SDE}, where $\widetilde{\s}=\left(\widetilde{\Omega}, \widetilde{\mathcal{F}},\widetilde{\p},
	\{\widetilde{\mathcal{F}_t}\}_{t\geq0},\widetilde{\W}\right)$ with $\left\{\widetilde{\mathcal{F}_t}\right\}_{t\geq0}
	=\sigma\left\{\widetilde{X}(\tau),\widetilde{\W}(\tau)\right\}_{\tau\in[0,t]}$. We have  finished the proof.

	\subsection{Proof of \ref{Abstract irregular SDE: P} in Theorem \ref{Abstract irregular SDE: T}}\label{proof of Thm-P}
	
	To obtain a pathwise solution to \eqref{Abstact irregular SDE}, we will use \ref{Abstract irregular SDE: M} in Theorem \ref{Abstract irregular SDE: T} and the Gy\"{o}ngy-Krylov Lemma, cf. Lemma \ref{Gyongy convergence}. The proof can naturally be broken down into several subsections.
	
	\subsubsection{Pathwise uniqueness of the cut-off problem}

	We first  state the following result which indicates that for $L^\infty(\Omega)$-initial values, the solution map is time 
	locally Lipschitz in the  less regular space $\Z$.
	
	\begin{Lemma}\label{local time Lipschitz}
		Let $\s=\left(\Omega, \mathcal{F},\p,\{\mathcal{F}_t\}_{t\geq0},\W\right)$ be a fixed stochastic basis and let \ref{uniqueness requirement} hold. Let $M>0$ be a constant. Assume that $X_0$ and $Y_0$ are  two $\X$-valued $\mathcal{F}_0$-measurable random variables satisfying $\|X_0\|_{\X},\|Y_0\|_{\X}<M$ almost surely. \\
		Let $(\s,X,\tau_1)$ and $(\s,Y,\tau_2)$ be two local pathwise solutions to \eqref{Abstact irregular SDE}  such that $X(0)=X_0$, $Y(0)=Y_0$ almost surely,  and $X(\cdot\wedge \tau_1),Y(\cdot\wedge \tau_2)\in L^2\left(\Omega;C([0,\infty);\X)\right)$ for $i=1,2$.\\
		Then, for any $T>0$, there exists a constant $C(M,T)>0$ such that
		\begin{equation}\label{local time Lip estimate}
		\displaystyle\E\sup_{t\in[0,\tau^T_{X,Y}]}\|X(t)-Y(t)\|^2_{\Z}\leq C(M,T)\E\|X_0-Y_0\|^2_{\Z}.
		\end{equation}
		In \eqref{local time Lip estimate} we used 
		\begin{align}
		\tau^T_{X}:=\inf\left\{t\geq0: \|X(t)\|_{\X}>M+2\right\}\wedge T,\ \ 
		\tau^T_{Y}:=\inf\left\{t\geq0: \|Y(t)\|_{\X}>M+2\right\}\wedge T,\label{stopping time local Lip}
		\end{align}
		and
		$
		\tau^T_{X,Y}=\tau^T_{X}\wedge \tau^T_{Y}.
		$
	\end{Lemma}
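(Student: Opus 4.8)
The plan is to pass to the difference $Z:=X-Y$ and to run a Gr\"onwall argument for $\|Z\|_\Z^2$ in the \emph{weaker} space $\Z$, where---in contrast to $\X$---the It\^o formula is genuinely available. Subtracting the two copies of \eqref{define solution} shows that, up to the stopping time $\tau^T_{X,Y}$, the process $Z$ solves $\dd Z=\big(b(t,X)-b(t,Y)+g(t,X)-g(t,Y)\big)\dd t+\big(h(t,X)-h(t,Y)\big)\dd\W$ as an identity in $\Z$. Since $X,Y\in C([0,\infty);\X)\subset C([0,\infty);\Z)$ almost surely, since the drift is $\Z$-valued (it is controlled in $\Z$ by \eqref{A21} and \eqref{B11}), and since $h(t,X)-h(t,Y)\in\LL_2(\U;\Y)\subset\LL_2(\U;\Z)$, the process $Z$ is a genuine $\Z$-valued It\^o process with $\Z$-Bochner drift and Hilbert--Schmidt diffusion into $\Z$. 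Hence the standard Hilbert-space It\^o formula applies to $z\mapsto\|z\|_\Z^2$ in the regular case (all spaces equal to $\Z$); this is exactly the gain from working with the difference in the joint space. I localize by $\tau^T_{X,Y}$ from \eqref{stopping time local Lip}, which by the assumption $\|X_0\|_\X,\|Y_0\|_\X<M$ is positive a.s.\ and ensures $\|X\|_\X,\|Y\|_\X\le M+2=:N$, so that every constant $q(N)$ in \ref{uniqueness requirement} is at our disposal.

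Applying It\^o's formula yields $\dd\|Z\|_\Z^2=\big[2(b(t,X)-b(t,Y),Z)_\Z+2(g(t,X)-g(t,Y),Z)_\Z+\|h(t,X)-h(t,Y)\|^2_{\LL_2(\U;\Z)}\big]\dd t+2\big((h(t,X)-h(t,Y))\dd\W,Z\big)_\Z$. By Cauchy--Schwarz and \eqref{B11} the first drift term is $\le 2q(N)k(t)\|Z\|_\Z^2$, while the last two drift terms are precisely the numerator in \eqref{B12} and are thus $\le q(N)k(t)\|Z\|_\Z^2$; the total drift is therefore $\le 3q(N)k(t)\|Z\|_\Z^2$. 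Taking expectations (the stochastic integral being a martingale) and using that $k$ is locally bounded, Gr\"onwall's inequality gives the plain bound $\E\|Z(t\wedge\tau^T_{X,Y})\|_\Z^2\le C_1(M,T)\E\|Z_0\|_\Z^2$ with $C_1(M,T)=\exp\big(3q(N)\int_0^T k(s)\dd s\big)$.

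To upgrade this to the supremum bound \eqref{local time Lip estimate}, I take the supremum over $[0,t\wedge\tau^T_{X,Y}]$ before taking expectations. The drift then contributes $3q(N)\int_0^t k(s)\,\E\|Z(s\wedge\tau^T_{X,Y})\|_\Z^2\dd s$, which is already controlled by $C_1(M,T)\E\|Z_0\|_\Z^2$ via the $L^2$ estimate just obtained. For the martingale I invoke the BDG inequality: its quadratic variation is dominated by $\int_0^{\cdot}\|h(t,X)-h(t,Y)\|_{\LL_2(\U;\Z)}^2\|Z\|_\Z^2\dd s$, and the Lipschitz-type control of $\|h(t,X)-h(t,Y)\|^2_{\LL_2(\U;\Z)}$ provided by \eqref{B12}, combined with the uniform bound $\|Z\|_\Z\le C(M)$ on $[0,\tau^T_{X,Y}]$ and the $\Z$-estimates \eqref{A21}, turns this into $\int_0^{\cdot}q(N)k(s)\|Z\|_\Z^4\dd s$. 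A Young inequality absorbs a factor $\tfrac14\,\E\sup\|Z\|_\Z^2$ into the left-hand side and leaves a term of the form $C\int_0^t k(s)\,\E\sup_{[0,s\wedge\tau^T_{X,Y}]}\|Z\|_\Z^2\dd s$, and a final Gr\"onwall step produces \eqref{local time Lip estimate}.

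The main obstacle is twofold. First, one must legitimize the It\^o formula for $\|Z\|_\Z^2$: although neither $X$ nor $Y$ is regular enough for It\^o's formula in $\X$ (exactly the difficulty emphasized in the introduction), their difference is regular in $\Z$, and the combined monotonicity \eqref{B12} is what controls the joint $g$--$h$ contribution, so that no regularity of $X,Y$ beyond $X(\cdot\wedge\tau),Y(\cdot\wedge\tau)\in L^2(\Omega;C([0,\infty);\X))$ is required. Second, and more delicate, is the martingale estimate: one must extract from the single combined inequality \eqref{B12} enough control on $\|h(t,X)-h(t,Y)\|^2_{\LL_2(\U;\Z)}$ in isolation to run BDG, and it is here that the boundedness of $\|Z\|_\Z$ on the stopping interval together with the auxiliary $\Z$-bounds \eqref{A21} must be used carefully so that the Gr\"onwall loop closes with the quadratic right-hand side $\E\|Z_0\|_\Z^2$ rather than a spurious additive constant.
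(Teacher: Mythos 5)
Your proposal is correct and follows essentially the same route as the paper: apply the Hilbert-space It\^o formula to $\|X-Y\|_\Z^2$ (legitimate because the difference is a genuine $\Z$-valued It\^o process), control the drift via \eqref{A12}/\eqref{B11} and the combined $g$--$h$ term via \eqref{B12}, then close with the BDG inequality, Young's inequality and Gr\"onwall on $[0,\tau^T_{X,Y}]$. The only cosmetic difference is your two-stage Gr\"onwall (a pointwise $L^2$ bound first, then the supremum bound), where the paper does it in one pass; note also that the paper's own BDG step uses exactly the isolated bound $\|h(t,X)-h(t,Y)\|^2_{\LL_2(\U;\Z)}\lesssim q(M+2)k^2(t)\|X-Y\|_\Z^2$ that you flag as the delicate point, reading it off from \eqref{B12} without further comment.
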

	
	\begin{proof}
		Let
		$Z=X-Y$. Then $Z$ satisfies the stochastic differential equation
		\begin{align*}
		{\rm d}\|Z\|^2_{\Z}
		=\,&2 \left(\left[h(t,X)- h(t,Y)\right]{\rm d}\W, Z\right)_{\Z}
		+2\left(b(t,X)- b(t,Y), Z\right)_{\Z}\,{\rm d}t\nonumber\\
		&+2\left(g(t,X)- g(t,Y),Z\right)_{\Z}\,{\rm d}t
		+ \|h(t,X)-h(t,X)\|_{\LL_2(\U; \Z)}^2\,{\rm d}t.
		\end{align*}
		By \ref{b in X}, \ref{uniqueness requirement},  It\^o's formula (which holds true on the entire space $\Z$),  and the BDG inequality, we find
		for some $C>0$ depending on $b, g, h$ the estimate	\begin{align*}
		&\hspace*{-0.6cm}\E\sup_{t\in[0,\tau^T_{X,Y}]}\|Z(t)\|^2_{\Z} -\E\|Z(0)\|^2_{\Z} \notag\\
		\leq\,&
		C\E\left(\int_0^{\tau^T_{X,Y}}\|h(t,X)- h(t,Y)\|_{\LL_2(\U,\Z)}^2\|Z\|_{\Z}^2\,{\rm d}t\right)^{\frac12}
		+\E\int_0^{\tau^T_{X,Y}}q(M+2)k(t)\|Z(t)\|^2_{\Z}\,{\rm d}t\notag\\
		\leq\,&
		C q(M+2)\E
		\left(\sup_{t\in[0,\tau^T_{X,Y}]}\|Z\|_{\Z}^2\cdot
		\int_0^{\tau^T_{X,Y}}k^2(t)\|Z\|_{\Z}^2
		{\rm d}t\right)^{\frac12}
		+Cq(M+2)\int_0^{T}k(t)\E\sup_{t'\in[0,\tau^t_{X,Y}]}\|Z(t')\|^2_{\Z}\,{\rm d}t\notag\\
		\leq &\frac12\E\sup_{t\in[0,\tau^T_{X,Y}]}\|Z\|_{\Z}^2
		+C(M,T)\int_0^{T}
		\E\sup_{t'\in[0,\tau^t_{X,Y}]}\|Z(t')\|^2_{\Z}\,{\rm d}t.
		\end{align*}
		If we apply  Gr\"{o}nwall's inequality to the  estimate above, we get \eqref{local time Lip estimate}.
	\end{proof}
	
	\begin{Lemma}\label{pathwise uniqueness original}
		Let
		$\s=\left(\Omega, \mathcal{F},\p,\{\mathcal{F}_t\}_{t\geq0},\W\right)$ be a fixed stochastic basis and let \ref{uniqueness requirement} hold. Let $X_0$ be an $\X$-valued $\mathcal{F}_0$-measurable random variable satisfying $\E\|X_0\|^2_{\X}<\infty$. If $(\s,X_1,\tau_1)$ and $(\s,X_2,\tau_2)$ are two local pathwise solutions to \eqref{Abstact irregular SDE}  
		satisfying $X_i(\cdot\wedge \tau_i)\in L^2\left(\Omega;C([0,\infty);\X)\right)$ for $i=1,2$ and
		$\p\{X_1(0)=X_2(0)=X_0\}=1$, then 
		\begin{equation*}
		\p\left\{X_1=X_2 , \ \forall\
		t\in[0,\tau_1\wedge\tau_2]\right\}=1.
		\end{equation*}
	\end{Lemma}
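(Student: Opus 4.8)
The strategy is to deduce pathwise uniqueness from the local Lipschitz estimate of Lemma~\ref{local time Lipschitz}. Since the two solutions share the same initial datum, $X_1(0)=X_2(0)=X_0$, the right-hand side of \eqref{local time Lip estimate} would vanish and force $X_1=X_2$ on the relevant interval -- provided we were allowed to invoke that lemma. The only obstruction is that Lemma~\ref{local time Lipschitz} requires the initial data to be bounded in $\X$ almost surely, whereas here we merely assume $\E\|X_0\|_{\X}^2<\infty$. The plan is therefore to localize over the $\mathcal F_0$-measurable events
\[
\Omega_M:=\{\|X_0\|_{\X}<M\},\qquad M\in\N,
\]
which increase to $\Omega$ up to a $\p$-null set, because $\E\|X_0\|_{\X}^2<\infty$ forces $\|X_0\|_{\X}<\infty$ almost surely.

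First I would reproduce the energy estimate behind Lemma~\ref{local time Lipschitz} with the indicator $\mathbf 1_{\Omega_M}$ inserted. Writing $Z=X_1-X_2$ and applying the It\^o formula on $\Z$ to $\|Z\|_{\Z}^2$ exactly as in the proof of Lemma~\ref{local time Lipschitz}, I multiply through by $\mathbf 1_{\Omega_M}$; since $\Omega_M\in\mathcal F_0\subset\mathcal F_t$, the indicator commutes with the It\^o integral and passes freely through the BDG inequality and Gr\"onwall's lemma. Controlling the coefficients by \ref{uniqueness requirement} with $N=M+2$ on the stochastic interval fixed by the stopping times
\[
\tau^T_{X_i}:=\inf\{t\ge0:\|X_i(t)\|_{\X}>M+2\}\wedge\tau_i\wedge T,\qquad \tau^T_{X_1,X_2}:=\tau^T_{X_1}\wedge\tau^T_{X_2},
\]
this yields
\[
\E\Big(\mathbf 1_{\Omega_M}\sup_{t\in[0,\tau^T_{X_1,X_2}]}\|Z(t)\|_{\Z}^2\Big)\le C(M,T)\,\E\big(\mathbf 1_{\Omega_M}\|X_0-X_0\|_{\Z}^2\big)=0 .
\]
Hence, for every fixed $M,T$ one obtains $X_1(t)=X_2(t)$ for all $t\in[0,\tau^T_{X_1,X_2}]$, $\p$-almost surely on $\Omega_M$.

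It then remains to strip off the localization by a pathwise limiting argument. Intersecting the above full-measure events over the countable family $(M,T)\in\N\times\N$, I fix $\omega$ in the resulting full-measure set. For any $t<\tau_1(\omega)\wedge\tau_2(\omega)$ and any integer $T>t$, the continuity of $X_1(\cdot,\omega)$ and $X_2(\cdot,\omega)$ in $\X$ gives $\sup_{s\le t}\max_i\|X_i(s,\omega)\|_{\X}<\infty$; choosing the integer $M$ large enough that $M>\|X_0(\omega)\|_{\X}$ and $M+2>\sup_{s\le t}\max_i\|X_i(s,\omega)\|_{\X}$ places $\omega$ in $\Omega_M$ and guarantees $t\le\tau^T_{X_1,X_2}(\omega)$, so that $X_1(t,\omega)=X_2(t,\omega)$. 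Thus $X_1=X_2$ on $[0,\tau_1\wedge\tau_2)$, and, since $\X\hookrightarrow\Z$ is injective, equality in $\Z$ is equality of the two processes; it extends to the closed interval $[0,\tau_1\wedge\tau_2]$ by the $\Z$-continuity of the solutions, and letting $T\to\infty$ concludes the proof.

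The only genuinely delicate point is this reconciliation of a merely square-integrable initial datum with the almost-sure boundedness demanded by Lemma~\ref{local time Lipschitz}. Because $g$ and $h$ are nonlinear, one cannot simply replace $X_0$ by a truncated datum $\mathbf 1_{\Omega_M}X_0$ and relate the corresponding solutions to $X_1,X_2$ on $\Omega_M$; the localization must instead be carried out at the level of the $\|Z\|_{\Z}^2$-energy identity, where it is harmless precisely because $\mathbf 1_{\Omega_M}$ is $\mathcal F_0$-measurable.
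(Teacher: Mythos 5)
Your argument is correct, but it handles the passage from bounded to merely square-integrable initial data by a genuinely different device than the paper. The paper first proves the claim for $\|X_0\|_{\X}<M$ a.s.\ by re-running the proof of Lemma \ref{local time Lipschitz} with the single stopping time $\tau^T_K=\inf\{t\geq 0:\|X_1(t)\|_{\X}+\|X_2(t)\|_{\X}>K\}\wedge T$ and letting $K\to\infty$ via monotone convergence; it then reduces the general case to the bounded one by showing that $(\mathbf 1_{\Omega_k}X_i,\mathbf 1_{\Omega_k}\tau_i)$ is itself a pathwise solution with the bounded datum $\mathbf 1_{\Omega_k}X_0$, using the identity $\mathbf 1_{\Omega_k}F(t,X)=F(t,\mathbf 1_{\Omega_k}X)-\mathbf 1_{\Omega_k^{c}}F(t,\mathbf 0)$ for $F\in\{b,g,h\}$ to absorb the nonlinearity. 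You instead push the $\mathcal F_0$-measurable indicator $\mathbf 1_{\Omega_M}$ through the $\|Z\|^2_{\Z}$ energy identity and Gr\"onwall's lemma, which is legitimate and spares you from verifying that the truncated processes solve the truncated equation; your countable intersection over $(M,T)$ together with the pathwise continuity argument then replaces the paper's monotone-convergence step. Two small remarks: your closing claim that one \emph{cannot} relate solutions with truncated data to $X_1,X_2$ on $\Omega_M$ is overstated, since the paper does exactly that via the identity above; and the indicator is in fact dispensable even in your own argument, because $Z(0)=X_0-X_0=0$ a.s.\ already makes the right-hand side of the Gr\"onwall estimate vanish, the boundedness of $\|X_i(t)\|_{\X}$ on the stochastic interval being supplied by the stopping times alone.
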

	
	\begin{proof}
		We first assume that $\|X_0\|_{\X}<M$ $\p-a.s.$ for some deterministic $M>0$.  For any $K>2M$ and $T>0$,  we define 
		\begin{align*}
		\tau^T_{K}:=\inf\left\{t\geq0: \|X_1(t)\|_{\X}+\|X_2(t)\|_{\X}>K\right\}\wedge T.
		\end{align*}
		Then one can repeat all steps in the proof of \eqref{local time Lip estimate} by using $\tau^T_{K}$ instead of $\tau^T_{X,Y}$ to find  
		$$\displaystyle\E\sup_{t\in[0,\tau^T_{K}]}\|X_1(t)-X_2(t)\|^2_{\Z}\leq C(K,T)\E\|X_1(0)-X_2(0)\|^2_{\Z}=0.$$ 
		It is easy to see that
		\begin{align}
		\p\{\liminf_{K\rightarrow\infty}\tau^T_{K}\geq\tau_1\wedge\tau_2\wedge T\}=1.
		\label{tau-k>tau1 tau2}
		\end{align}
		Sending $K\rightarrow\infty$, using the monotone convergence theorem and \eqref{tau-k>tau1 tau2} with noticing $T>0$ is arbitrary, we obtain the desired result for $X_0$ being almost surely bounded.
		
		It remains to remove this restriction. 
		Motivated by \cite{GlattHoltz-Ziane-2009-ADE,GlattHoltz-Vicol-2014-AP}, for general $\X$-valued $\mathcal{F}_0$-measurable initial data such that $\E\|X_0\|^2_{\X}<\infty$ holds, we define 
		$\Omega_k=\{k-1\leq\|X_0\|_{\X}<k\}$, $k\geq1$. Then we see that $\Omega_k\bigcap\Omega_{k'}=\emptyset$ for $k\neq k'$, and $\bigcup_{k\geq1}\Omega_k$ is a set of full measure. Consider
		\begin{align*}
		X_0(\omega)&
		=\sum_{k\geq1}X_0(\omega,x)\textbf{1}_{k-1\leq\|X_0\|_{\X}<k}=:\sum_{k\geq1}X_{0,k}(\omega)\ \  \p-a.s.
		\end{align*}
		Notice that
		\begin{align*}
		&\hspace*{-0.5cm}\textbf{1}_{\Omega_k}X_1(t\wedge \tau_1)-\textbf{1}_{\Omega_k}X(0)\\
		=\,&\textbf{1}_{\Omega_k}\int_0^{t\wedge \tau_1} b(t',X_1)\,{\rm d}t'+\textbf{1}_{\Omega_k}\int_0^{t\wedge \tau_1}g(t',X_1)\,{\rm d}t'
		+\textbf{1}_{\Omega_k}\int_0^{t\wedge \tau_1} h(t,X_1)\,{\rm d}\W\\
		=\,&\int_0^{t\wedge \textbf{1}_{\Omega_k}\tau_1}\textbf{1}_{\Omega_k}b(t',X_1)\,{\rm d}t'
		+\int_0^{t\wedge \textbf{1}_{\Omega_k}\tau_1}\textbf{1}_{\Omega_k}g(t',X_1)\,{\rm d}t'
		+\int_0^{t\wedge \textbf{1}_{\Omega_k}\tau_1}\textbf{1}_{\Omega_k} h(t',X_1)\,{\rm d}\W.
		\end{align*}
		Due to $\textbf{1}_{\Omega_k} F(t,X_1)=F(t,\textbf{1}_{\Omega_k} X_1)-\textbf{1}_{\Omega^C_k}F(t,\mathbf{0})$ for $F\in\{b,g,h\}$, and \ref{b in X}, \ref{gn hn condition} we get  $\|b(t,\mathbf{0})\|_{\X}$, $\|g(t,\mathbf{0})\|_{\Z}$, $\|h(t,\mathbf{0})\|_{\LL_2(\U;\Y)}<\infty$. Then we can proceed
		with
		\begin{align*}
		&\hspace*{-0.5cm}\textbf{1}_{\Omega_k}X_1(t\wedge \textbf{1}_{\Omega_k}\tau_1)-X_{0,k}\\
		=\,&\int_0^{t\wedge \textbf{1}_{\Omega_k}\tau_1}b(t',\textbf{1}_{\Omega_k}X_1)\,{\rm d}t'+\int_0^{t\wedge \textbf{1}_{\Omega_k}\tau_1}g(t',\textbf{1}_{\Omega_k}X_1)\,{\rm d}t'
		+\int_0^{t\wedge \textbf{1}_{\Omega_k}\tau_1}
		h(t',\textbf{1}_{\Omega_k} X_1)\,{\rm d}\W\ \ \p-a.s.,
		\end{align*}
		which means that $(\textbf{1}_{\Omega_k}X_1,\textbf{1}_{\Omega_k}\tau_1)$ is a solution to \eqref{Abstact irregular SDE}  with
		initial data $X_{0,k}$.	
		%
		%
		%
		%
		Similarly, $(\textbf{1}_{\Omega_k}X_2,\textbf{1}_{\Omega_k}\tau_2)$  is also a solution to \eqref{Abstact irregular SDE} with initial data $X_{0,k}$. 
		Altogether we obtain  $\textbf{1}_{\Omega_k}X_1=\textbf{1}_{\Omega_k}X_{2}$ on $[0,\textbf{1}_{\Omega_k}\tau_1\wedge \textbf{1}_{\Omega_k}\tau_2]$ almost surely.   Because 	$X_i
		=\sum_{k\geq1}X_i\textbf{1}_{\Omega_k}$ and  $\tau_i=\sum_{k\geq1}\tau_i\textbf{1}_{\Omega_k}$  almost surely for $i=1,2$,  $\Omega_k\bigcap\Omega_{k'}=\emptyset$ for $k\neq k'$ and $\bigcup_{k\ge1}\Omega_k$ is a set of full measure, 
		we have
		$$\p\big\{X_1=X_2 \,  \forall\ t\in[0,\tau_1\wedge \tau_2]\big\}\geq\p\big\{\cup_{ k\geq1}\Omega_k\big\}
		=1,$$
		which completes the proof.
	\end{proof}
	
	For the cut-off problem \eqref{cut-off problem}, we also have
	pathwise uniqueness. Indeed,  since $\Z\hookrightarrow\V$,  the additional terms coming from the cut-off function $\chi_R(\cdot)$ can
	be handled by the mean value theorem as 
	$$\left|\chi_R\big(\|X_1\|_{\V}\big)-\chi_R\big(\|X_2\|_{\V}\big)\right|\leq C\|X_1-X_2\|_{\V}\leq C \|X_1-X_2\|_{\Z}.$$ 
	Then one can modify the proof of  Lemma \ref{pathwise uniqueness original} in a straightforward way  to get
	
	\begin{Lemma}\label{pathwise uniqueness cut-off}
		Let $T>0$ and
		$\s=\left(\Omega, \mathcal{F},\p,\{\mathcal{F}_t\}_{t\geq0},\W\right)$ be a fixed stochastic basis. Let \ref{uniqueness requirement} hold  and  let 
		$X_0$ be an $\X$-valued $\mathcal{F}_0$-measurable random variable satisfying $\E\|X_0\|^2_{\X}<\infty$. \\ 
		If  $(\s,X_1,T)$ and $(\s,X_2,T)$ are two solutions, on the same basis $\s$, of \eqref{cut-off problem} such that $\p\{X_1(0)=X_2(0)=X_0\}=1$ and $X_i\in L^2\left(\Omega;C([0,T);\X)\right)$ for $i=1,2$, then
		\begin{equation*}
		\p\left\{X_1=X_2 \, \forall\
		t\in[0,T]\right\}=1.
		\end{equation*}
	\end{Lemma}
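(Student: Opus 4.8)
The plan is to reproduce the argument of Lemma~\ref{pathwise uniqueness original}, with the cut-off factors $\chi_R^2(\|\cdot\|_\V)$ and $\chi_R(\|\cdot\|_\V)$ as the only additional feature. I would first reduce to the case $\|X_0\|_\X < M$ almost surely for some deterministic $M > 0$; the passage to general $X_0$ with $\E\|X_0\|_\X^2 < \infty$ is then identical to the final step of Lemma~\ref{pathwise uniqueness original}, decomposing $\Omega = \bigcup_k \Omega_k$ with $\Omega_k = \{k-1 \le \|X_0\|_\X < k\}$ and using that $\mathbf{1}_{\Omega_k}$ passes through \eqref{cut-off problem} unchanged. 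Under the bound $\|X_0\|_\X < M$, for each $K > 2M$ I introduce the stopping time
\[
\tau_K^T := \inf\{t \ge 0 : \|X_1(t)\|_\X + \|X_2(t)\|_\X > K\} \wedge T,
\]
which is positive because $X_1, X_2 \in C([0,T); \X)$, and set $Z = X_1 - X_2$.

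Applying It\^o's formula to $\|Z\|_\Z^2$ on the whole space $\Z$, exactly as in Lemma~\ref{local time Lipschitz} (the computation being carried out entirely in $\Z$, so that the non-invariance of $g,h$ in $\X$ is immaterial), the novelty is to control the differences of the cut-off coefficients. For a generic coefficient $F \in \{b + g, h\}$ and $\chi \in \{\chi_R^2, \chi_R\}$ I split
\begin{align*}
\chi(\|X_1\|_\V)F(t,X_1) - \chi(\|X_2\|_\V)F(t,X_2)
&= \chi(\|X_1\|_\V)\big[F(t,X_1) - F(t,X_2)\big]\\
&\quad + \big[\chi(\|X_1\|_\V) - \chi(\|X_2\|_\V)\big]F(t,X_2).
\end{align*}
In the first group, $\chi \le 1$ together with \ref{uniqueness requirement} (and \ref{b in X} for the $b$-part) gives, after pairing against $Z$ in $\Z$, a bound of the form $q(K)k(t)\|Z\|_\Z^2$, precisely as in Lemma~\ref{local time Lipschitz}. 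In the second group I use the mean value theorem and the embedding $\Z \hookrightarrow \V$, namely
\[
\big|\chi_R(\|X_1\|_\V) - \chi_R(\|X_2\|_\V)\big| \le C\|X_1 - X_2\|_\V \le C\|Z\|_\Z,
\]
so that each such term is controlled by $C\|F(t,X_2)\|_\Z\|Z\|_\Z^2$; on $\{t \le \tau_K^T\}$ one has $\|X_2\|_\X \le K$, whence \eqref{A11}, \eqref{A21} and $\Y \hookrightarrow \Z$ bound $\|F(t,X_2)\|_\Z$ by $q(K)k(t)$, which is integrable in time.

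Collecting these estimates and treating the stochastic integral by the BDG inequality as in Lemma~\ref{local time Lipschitz}, I obtain
\[
\E\sup_{t \in [0,\tau_K^T]}\|Z\|_\Z^2 \le \frac12\E\sup_{t \in [0,\tau_K^T]}\|Z\|_\Z^2 + C(K,T)\int_0^T \E\sup_{t' \in [0,\tau_K^t]}\|Z(t')\|_\Z^2\,\mathrm{d}t,
\]
and Gr\"onwall's inequality forces $\E\sup_{[0,\tau_K^T]}\|Z\|_\Z^2 \le C(K,T)\E\|X_1(0) - X_2(0)\|_\Z^2 = 0$. Since $\p\{\liminf_{K \to \infty}\tau_K^T \ge T\} = 1$, letting $K \to \infty$ and invoking monotone convergence yields $X_1 = X_2$ on $[0,T]$ almost surely. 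I expect the only point requiring care to be the treatment of the cut-off difference terms, but, as noted in the remark preceding the statement, the embedding $\Z \hookrightarrow \V$ reduces this to the routine mean value estimate above, so no essential difficulty beyond Lemma~\ref{pathwise uniqueness original} arises.
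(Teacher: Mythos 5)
Your proposal follows the paper's own proof of Lemma \ref{pathwise uniqueness cut-off} essentially verbatim: the paper likewise reduces the statement to the argument of Lemma \ref{pathwise uniqueness original} (stopping times $\tau^T_K$, It\^{o} formula for $\|Z\|^2_\Z$, BDG, Gr\"{o}nwall), the only new ingredient being the mean-value/embedding bound $\left|\chi_R(\|X_1\|_\V)-\chi_R(\|X_2\|_\V)\right|\le C\|X_1-X_2\|_\V\le C\|X_1-X_2\|_\Z$, which is exactly your treatment of the cut-off differences. The one step you gloss over --- the cross term arising when expanding $\|\chi_R(\|X_1\|_\V)h(t,X_1)-\chi_R(\|X_2\|_\V)h(t,X_2)\|^2_{\LL_2(\U;\Z)}$, which does not split cleanly into your two groups because \eqref{B12} controls only the combination with the $g$-pairing and not $\|h(t,X_1)-h(t,X_2)\|_{\LL_2(\U;\Z)}$ by itself --- is glossed over identically in the paper, whose proof of this lemma consists of the same remark and the instruction to "modify the proof of Lemma \ref{pathwise uniqueness original} in a straightforward way."
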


	\subsubsection{Pathwise solution to the cut-off problem}
	
	Now we prove the existence and uniqueness of a pathwise solution to \eqref{cut-off problem}. To be more precise, we are going to show the following result.
	
	\begin{Lemma}\label{global pathwise solution cut-off}
		Let $\s=(\Omega, \mathcal{F},\p,\{\mathcal{F}_t\}_{t\geq0}, \W)$ be a fixed stochastic basis.  Let $X_0\in L^2(\Omega;\X)$ be an $\mathcal{F}_0$-measurable random variable. \\
		If Assumptions \ref{b in X}-\ref{continuity requirement} hold, then \eqref{cut-off problem} has a unique global pathwise solution $X$ which satisfies  for any $T>0$ 
		\begin{equation}\label{global_cutoof}
		X\in L^2\left(\Omega;C([0,T];\X)\right).
		\end{equation} 
	\end{Lemma}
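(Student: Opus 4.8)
The plan is to derive a strong (pathwise) solution on the given basis $\s$ from the martingale solutions of part \ref{Abstract irregular SDE: M} together with pathwise uniqueness, via the Gy\"ongy--Krylov characterization of convergence in probability (Lemma \ref{Gyongy convergence}), and then to upgrade the time regularity of that solution from $\Z$ to $\X$ by means of the regularizing operators $\{T_\e\}$ supplied by \ref{continuity requirement}.

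\emph{Existence and uniqueness in $C([0,T];\Z)$.} First I would re-run Lemmas \ref{Tightness} and \ref{convergence} for the cut-off problem \eqref{cut-off problem}, producing the approximating family $\{X_\e\}$ solving \eqref{approximation cut problem} together with the $\e$-uniform bound \eqref{Xn uniform bound}. To apply Lemma \ref{Gyongy convergence} on $\s$ I would examine the joint laws of the pairs $(X_\e,X_{\e'})$ on $C([0,T];\Z)^2\times C([0,T];\U_0)$; their tightness is inherited from Lemma \ref{Tightness}, and Skorokhod's theorem furnishes a.s.-convergent copies on a common space. By the arguments of Lemma \ref{convergence} (using \ref{gn hn condition} and Lemma \ref{stochastic integral convergence}) both marginal limits are martingale solutions of \eqref{cut-off problem} driven by one and the same Wiener process, so Lemma \ref{pathwise uniqueness cut-off}---whose defining estimate \eqref{local time Lip estimate} is measured in the weaker norm of $\Z$ and hence applies to these $\Z$-continuous, $\X$-bounded limits---forces them to coincide. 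Consequently every subsequence of the joint laws admits a further subsequence concentrated on the diagonal, and Lemma \ref{Gyongy convergence} yields an $\F_t$-adapted process $X$ with $X_\e\to X$ in probability in $C([0,T];\Z)$; passing to the limit in \eqref{approximation cut problem} shows $(X,T)$ solves \eqref{cut-off problem}. Fatou's lemma applied to \eqref{Xn uniform bound} transfers to $X\in L^2(\Omega;L^\infty(0,T;\X)\cap C([0,T];\Z))$, uniqueness in this class is again Lemma \ref{pathwise uniqueness cut-off}, and since $T>0$ is arbitrary the solution is global.

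\emph{Time continuity in $\X$.} The principal task is to promote $X\in L^\infty(0,T;\X)\cap C([0,T];\Z)$ to $X\in C([0,T];\X)$, and this is exactly where the It\^o formula is unavailable on $\X$ and \ref{continuity requirement} must be invoked. A standard argument (using boundedness in $\X$, continuity in $\Z$, and density of $\X\hookrightarrow\Z$) first gives $X\in C_w([0,T];\X)$, so in particular $t\mapsto\|X(t)\|_{\X}$ is lower semicontinuous. For the matching upper bound I would apply the continuous linear operator $T_\e:\Z\to\X$ to \eqref{cut-off problem}: then $T_\e X$ is an $\X$-valued It\^o process with $\X$-valued coefficients, and the classical It\^o formula expresses $\|T_\e X(t)\|_{\X}^2$ through drift and quadratic-variation terms bounded uniformly in $\e$ by \eqref{B23}--\eqref{B24}. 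Letting $\e\to0$ and using $T_\e X\to X$ in $\X$ from \eqref{B21}, I would identify $t\mapsto\|X(t)\|_{\X}^2$ with a continuous adapted process, whence $\limsup_{s\to t}\|X(s)\|_{\X}\le\|X(t)\|_{\X}$. Combining this with the lower semicontinuity yields continuity of $t\mapsto\|X(t)\|_{\X}$, and in the Hilbert space $\X$ weak continuity together with norm continuity upgrades to strong continuity $X\in C([0,T];\X)$; with the $L^2(\Omega)$ bound this is precisely \eqref{global_cutoof}.

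I expect the main obstacle to be this last passage $\e\to0$ in the It\^o expansion of $\|T_\e X\|_{\X}^2$: one must show that the regularized pairings $\chi_R^2(\|X\|_{\V})\,(T_\e g(t,X),T_\e X)_{\X}$ and the corrector $\chi_R^2(\|X\|_{\V})\,\|T_\e h(t,X)\|_{\LL_2(\U;\X)}^2$ converge to a bona fide continuous-in-time process, which is delicate exactly because $g$ and $h$ are not invariant in $\X$. The uniform bounds \eqref{B23}--\eqref{B24}, the strong convergence $T_\e X\to X$ in $\X$, and a dominated-convergence together with a diagonal argument in $\e$ are what make this succeed, while the cut-off $\chi_R(\|X\|_{\V})$ keeps every quantity finite and uniform in $\e$.
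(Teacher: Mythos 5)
Your first step---existence and uniqueness of a global solution in $L^2\left(\Omega;L^\infty(0,T;\X)\cap C([0,T];\Z)\right)$ via tightness of the joint laws of pairs $(X_{\e^{(1)}},X_{\e^{(2)}},\W)$, Skorokhod representation, identification of both limits as martingale solutions, pathwise uniqueness in the $\Z$-norm from Lemma \ref{pathwise uniqueness cut-off}, and the Gy\"ongy--Krylov Lemma \ref{Gyongy convergence}---is exactly the paper's argument. The gap sits in your second step, at precisely the point you flag as the main obstacle. You propose to let $\e\to0$ \emph{inside} the It\^o expansion of $\|T_\e X(t)\|^2_{\X}$ and thereby ``identify $t\mapsto\|X(t)\|^2_{\X}$ with a continuous adapted process'', invoking dominated convergence and a diagonal argument. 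But Assumption \ref{continuity requirement} supplies only \emph{uniform-in-$\e$ bounds} \eqref{B23}--\eqref{B24} on the combinations $2\left(T_\e g(t,X),T_\e X\right)_{\X}+\|T_\e h(t,X)\|^2_{\LL_2(\U;\X)}$ and $\sum_i\left|\left(T_\e h(t,X)e_i,T_\e X\right)_{\X}\right|^2$; it asserts no convergence of these quantities as $\e\to0$, and the individual terms $\left(T_\e g(t,X),T_\e X\right)_{\X}$ and $\|T_\e h(t,X)\|^2_{\LL_2(\U;\X)}$ are in general expected to diverge, since $g$ and $h$ are not $\X$-valued. Dominated convergence therefore has neither a pointwise limit nor, for the signed pairing, a dominating function to act on, and the drift integrals in the It\^o expansion need not converge to anything: the decomposition of $\|X(t)\|^2_{\X}$ into drift, corrector and martingale parts that your identification presupposes may simply not exist. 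The one-sided nature of \eqref{B23} also means your intermediate claim $\limsup_{s\to t}\|X(s)\|_{\X}\le\|X(t)\|_{\X}$ would still require uniform control of the martingale parts, which again leads back to moment estimates on increments rather than to a term-by-term limit.

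The paper never takes a limit of the coefficients. From the It\^o formula \eqref{TnX 2} together with \eqref{B23}, \eqref{B24}, \ref{b in X}, the bound $\chi_R\le1$ and the BDG inequality, it derives the increment estimate $\E\bigl[\bigl(\|T_\e X(t_1\wedge\tau_N)\|^2_{\X}-\|T_\e X(t_2\wedge\tau_N)\|^2_{\X}\bigr)^4\bigr]\leq C(N,T)|t_1-t_2|^{2}$ \emph{uniformly in} $\e$, where $\tau_N$ is the stopping time \eqref{stopping time X}; only the sizes of the regularized pairings enter, never their limits. It then lets $\e\to0$ by Fatou's lemma using \eqref{B21}, obtains the same fourth-moment bound for $\|X(\cdot\wedge\tau_N)\|^2_{\X}$, and concludes continuity of $t\mapsto\|X(t\wedge\tau_N)\|_{\X}$ from Kolmogorov's continuity theorem; combined with the weak continuity in $\X$ that you correctly established, this yields \eqref{global_cutoof}. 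If you replace the ``identify the limit of the It\^o expansion'' step by this Kolmogorov-criterion argument, your proof closes.
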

	
	\begin{proof} 
		Uniqueness is a direct consequence of Lemma \ref{pathwise uniqueness cut-off}. 
		The proof  of the other assertions is divided into two steps.
		
		\textit{Step 1: Existence.} 	
		Let $\s=(\Omega, \mathcal{F},\p,\{\mathcal{F}_t\}_{t\geq0}, \W)$ be given and  let $X_\e$  be the global pathwise solution to \eqref{approximation cut problem}. We define sequences of measures
		$\nu_{\e^{(1)},\e^{(2)}}$ and $\mu_{\e^{(1)},\e^{(2)}}$ as
		\begin{align*}
		\nu_{\e^{(1)},\e^{(2)}}(\cdot)=\p\left\{(X_{\e^{(1)}},X_{\e^{(2)}})\in\cdot\right\}\ \mathrm{on}\ C([0,T];\Z)\times C([0,T];\Z),
		\end{align*}
		and
		\begin{align*}
		\mu_{\e^{(1)},\e^{(2)}}(\cdot)=\p\left\{(X_{\e^{(1)}},X_{\e^{(2)}},\W)\in\cdot\right\}\ \mathrm{on}\ C([0,T];\Z)\times C([0,T];\Z)\times C([0,T];\U_0).
		\end{align*}
		Let $\left\{\nu_{\e_k^{(1)},\e_k^{(2)}}\right\}_{k\in\N}$ be an arbitrary subsequence of $\left\{\nu_{\e^{(1)},\e^{(2)}}\right\}_{n\in\N}$  such that $\e_k^{(1)},\e_k^{(2)}\rightarrow0$ as $k\rightarrow \infty$.
		With minor modifications in the proof of Lemma \ref{Tightness}, the tightness of $\left\{\nu_{\e_k^{(1)},\e_k^{(2)}}\right\}_{k\in\N}$ can be  obtained. Similar to Lemma \ref{convergence}, one can find a probability space $\left(\widetilde{\Omega}, \widetilde{\mathcal{F}},\widetilde{\p}\right)$ on which there is a sequence of random variables $\left(\underline{X_{\e^{(1)}_k}},\overline{X_{\e^{(2)}_k}},\widetilde{\W_{k}}\right)$ and a random variable $\left(\underline{X},\overline{X},\widetilde{\W}\right)$ such that
		$$\left(\underline{X_{\e^{(1)}_k}},\overline{X_{\e^{(2)}_k}},\widetilde{\W_{k}}\right)\xrightarrow[k\rightarrow\infty]{} \left(\underline{X},\overline{X},\widetilde{\W}\right) \ \mathrm{in}\ C([0,T];\Z)\times C([0,T];\Z)\times C([0,T];\U_0)\ \ \widetilde{\p}-a.s.$$
		Then, $\nu_{\e^{(1)},\e^{(2)}}$ converges weakly to a measure $\nu$ on $C([0,T];\Z)\times C([0,T];\Z)$ defined by
		$$\nu(\cdot)
		=\widetilde{\p}\left\{\left(\underline{X},\overline{X}\right)\in\cdot\right\}.$$
		Going along the lines as in Section \ref{prove M solution}, we see that both $\left(\widetilde{\s},\underline{X},T\right)$ and $\left(\widetilde{\s},\overline{X},T\right)$ are martingale solutions to \eqref{cut-off problem} such that $\underline{X},\overline{X}\in L^2\left(\widetilde{\Omega};L^\infty(0,T;\X)\cap C([0,T];\Z)\right)$. Moreover, since $X_\e(0)\equiv X_0$ for all $n$, we have that $\underline{X}(0)=\overline{X}(0)$ almost surely in $\widetilde{\Omega}$. Then we use Lemma \ref{pathwise uniqueness cut-off} to see $$\nu\left(
		\left\{\left(\underline{X},\overline{X}\right)\in
		C([0,T];\Z)\times C([0,T];\Z),\underline{X}=\overline{X}\right\}
		\right)
		=1.$$
		Lemma \ref{Gyongy convergence} implies that the original sequence $\{X_\e\}$ defined on the initial probability space $(\Omega, \mathcal{F},\p)$ has a subsequence (still labeled in the same way) satisfying
		\begin{equation}
		X_\e\rightarrow X\ {\rm in}\ C\left([0,T];\Z\right) \label{convergence Xn}
		\end{equation}
		for some $X$ in $C([0,T];\Z)$. Similar to \eqref{solution M estimate}, we have
		\begin{align}\label{solution P estimate}
		\E\sup_{t\in[0,T]} \|X\|_{\X}^2
		\leq\,&
		\liminf_{m\rightarrow \infty}
		\E\sup_{t\in[0,T]} \|\pi_mX\|_{\X}^2\notag\\
		\leq\,&
		\liminf_{m\rightarrow \infty}
		\liminf_{\e\rightarrow 0}
		\E\sup_{t\in[0,T]} \|\pi_m X_\e\|_{\X}^2\notag\\
		\leq\,& \liminf_{m\rightarrow \infty}
		\liminf_{\e\rightarrow 0} \E\sup_{t\in[0,T]} \|X_\e\|_{\X}^2
		<C(R,X_0,T).
		\end{align} 
		Therefore $X\in L^2\left(\Omega;L^\infty(0,T;\X)\cap C([0,T];\Z)\right)$.
		Since for each $n$, $X_\e$ is $\{\mathcal{F}_t\}_{t\geq0}$ progressive measurable, so is $X$. 
		Using \eqref{convergence Xn} and the embedding $\Z\hookrightarrow\V$, we obtain a global pathwise solution to \eqref{cut-off problem}.

		\textit{Step 2: Time continuity.}  As	$X\in L^2\left(\Omega;L^\infty(0,T;\X)\cap C([0,T];\Z)\right)$, now we only need to prove that $X(t)$ is continuous in $\X$. Since $\X\hookrightarrow\Z$ is dense, we see that $X$ is weakly continuous in $\X$ (cf. \cite[page 263, Lemma 1.4]{Temam-1977-book}). It suffices to  prove the continuity of $[0,T]\ni t\mapsto \|X(t)\|_\X$. 
		The difficulty here is that the problem \eqref{Abstact irregular SDE}
		is singular,  i.e., $g(t,X)$ is only a $\Z$-valued process and $h(t,X)$ is only an $\LL_2(\U;\Y)$-valued process, hence the  products $\left(g(t,X),X\right)_{\X}$ and $\left(h(t,X)e_i,X\right)_{\X}$ might not exist 
		and the  classical It\^{o} formula in the Hilbert space $\X$ (see \cite[Theorem 4.32]{Prato-Zabczyk-2014-Cambridge} or \cite[Theorem 2.10]{Gawarecki-Mandrekar-2010-Springer}) can not be used directly here. 
		At this point  the regularization operator $T_\e$ from \ref{continuity requirement}  is invoked to consider the 
		It\^{o} formula for $\|T_\e  X\|^2_{\X}$ instead.  Then we have
		\begin{align}
		{\rm d}\|T_\e X\|^2_{\X}
		=\,&2\chi_R\big(\|X\|_{\V}\big)\left(T_\e h(t,X)\,{\rm d}\W,T_\e X\right)_{\X}
		+2\chi^2_R\big(\|X\|_{\V}\big)\left(T_\e b(t,X),T_\e X\right)_{\X}\,{\rm d}t\nonumber\\
		&+2\chi^2_R\big(\|X\|_{\V}\big)\left(T_\e g(t,X),T_\e X\right)_{\X}\,{\rm d}t+
		\chi^2_R\big(\|X\|_{\V}\big)\|T_\e h(t,X)\|_{\LL_2(\U;\X)}^2\,{\rm d}t.\label{TnX 2}
		\end{align}
		By \eqref{solution P estimate}, 
		\begin{equation}\label{stopping time X}
		\tau_N=\inf\{t\ge0:\|X\|_{\X}>N\}\rightarrow \infty\ \text{as}\ N\rightarrow\infty\ \ \p-a.s.
		\end{equation}
		Thus, we only need to prove the continuity up to time $\tau_N\wedge T$ for each $N\ge 1$. 
		Using \ref{continuity requirement}, \ref{b in X} and the bound
		$\chi_R\big(\|X\|_{\V}\big)\leq 1$, we have for $[t_2,t_1]\subset[0,T]$ with $t_1-t_2<1$ the estimate
		\begin{align*}
		\E\left[\left(\|T_\e  X(t_1\wedge\tau_N)\|^2_{\X}-\|T_\e  X(t_2\wedge\tau_N)\|^2_{\X}\right)^4\right]
		\leq\,& C(N,T)|t_1-t_2|^{2}.
		\end{align*}
		Using Fatou's lemma, we arrive at
		\begin{align*}
		\E\left[\left(\|X(t_1\wedge\tau_N)\|^2_{\X}-\|X(t_2\wedge\tau_N)\|^2_{\X}\right)^4\right]\leq C(N,T)|t_1-t_2|^{2},
		\end{align*}
		which together with Kolmogorov's continuity theorem ensures the continuity of $t\mapsto\|X(t\wedge\tau_N)\|_{\X}$.
	\end{proof}

	With Lemma \ref{global pathwise solution cut-off} at hand, we are in the position to finish the proof of \ref{Abstract irregular SDE: P} in Theorem \ref{Abstract irregular SDE: T}.
	
	\subsubsection{Concluding the proof of  \ref{Abstract irregular SDE: P} in Theorem \ref{Abstract irregular SDE: T}}\label{prove P solution}

	%

	Similar to Lemma \ref{pathwise uniqueness original}, for $X_0(\omega,x)\in L^2(\Omega; \X)$, we let 
	$$
	\Omega_k=\{k-1\leq\|X_0\|_{\X}<k\},\ \ k\geq1.
	$$
	Since $\E\|\X_0\|^2_{\X}<\infty$, we have $1=\sum_{k\geq1}\textbf{1}_{\Omega_k}$ $\p-a.s.$, which means that
	\begin{align*}
	X_0(\omega)&
	=\sum_{k\geq1}X_0(\omega,x)\textbf{1}_{k-1\leq\|X_0\|_{\X}<k}:=\sum_{k\geq1}X_{0,k}(\omega)\ \  \p-a.s.
	\end{align*}
	On account of Lemma \ref{global pathwise solution cut-off}, we let $X_{k,R}$ be the global  pathwise solution to the cut-off problem \eqref{cut-off problem} with initial value $X_{0,k}$ and cut-off function $\chi_R(\cdot)$. Define
	\begin{equation}\label{Remove tau k}
	\tau_{k,R}=\inf\left\{t>0:\sup_{t'\in[0,t]}\|X_{k,R}(t')\|^2_{\X}>\|X_{0,k}\|^2_{\X}+2\right\}.
	\end{equation}
	Since $X_{k,R}$ is continuous in time (cf. Lemma \ref{global pathwise solution cut-off}), 
	for any $R>0$, we have $\p\{\tau_{k,R}>0,\ \forall k\ge1\}=1$. Now we let $R=R_k$ be discrete and then denote $(X_k,\tau_k)=(X_{k,R_k},\tau_{k,R_k})$. If $R^2_k>k^2+2$, then 
	$\p\{\tau_k>0,\ \forall k\ge1\}=1$
	and
	$$\p\left\{\|X_k\|^2_{\V}\leq\|X_k\|^2_{\X}\leq\|X_{0,k}\|^2_{\X}+2<R^2_k,\ \forall t\in[0,\tau_{k}],\ \forall k\ge1\right\}=1,$$
	which means $$\p\left\{\chi_{R_k}(\|X_k\|_{\V})=1,\ \forall t\in[0,\tau_k],\ \forall k\ge1\right\}=1.$$
	Therefore $(X_k,\tau_k)$ is the pathwise solution  to \eqref{Abstact irregular SDE} with initial value $X_{0,k}$.
	As has been shown in Lemma \ref{pathwise uniqueness original}, $\textbf{1}_{\Omega_k}X_{k}$ also solves   \eqref{Abstact irregular SDE} with initial value $X_{0,k}$ on $[0,\textbf{1}_{\Omega_k}\tau_{k}]$. Then uniqueness means $X_k=\textbf{1}_{\Omega_k}X_{k}$ on $[0,\textbf{1}_{\Omega_k}\tau_{k}]$  $\p-a.s.$ Therefore we infer from
	$\p\{\bigcup_{k\geq1}\Omega_k\}=1$ that
	the pair
	\begin{equation*}
	\bigg(X=\sum_{k\geq1}\textbf{1}_{\Omega_k}X_k,\ \
	\tau=\sum_{k\geq1}\textbf{1}_{\Omega_k}\tau_k\bigg)
	\end{equation*}
	is a pathwise solution to \eqref{Abstact irregular SDE} corresponding to the initial condition $X_0$.  Since for each $k$, $X_k$ is continuous in time (cf. Lemma \ref{global pathwise solution cut-off}), 
	so is $X$.  Then we have
	\begin{align*}
	\sup_{t\in[0,\tau]}\|X\|_{H^s}^2
	=\sum_{k\geq1}\textbf{1}_{k-1\leq\|X_0\|_{\X}<k}\sup_{t\in[0,\tau_k]}\|X_k\|_{H^s}^2
	\leq\,&\sum_{k\geq1}\textbf{1}_{k-1\leq\|X_0\|_{\X}<k}\left( \|X_{0,k}\|^2_{H^s}+2\right)
	\leq2\|X_{0}\|^2_{H^s}+4.
	\end{align*}
	Taking expectation gives rise to \eqref{L2 moment bound} and we have finished the proof of \ref{Abstract irregular SDE: P} in Theorem \ref{Abstract irregular SDE: T}.
	
	
	\subsection{Proof of  \ref{Abstract irregular SDE: P:time and blow-up} in Theorem \ref{Abstract irregular SDE: T}}
	
	To complete the proof of Theorem \ref{Abstract irregular SDE: T}, it suffices to prove the blow-up criterion 
	\eqref{Blow-up criterion} when additionally\ref{blow-up requirement} holds true.  To show it, we define
	\begin{align*}
	\tau_{1,m}:=\inf\left\{t\geq0: \|X(t)\|_{\X}\geq m\right\},\ \ \
	\tau_{2,l}:=\inf\left\{t\geq0: \|X(t)\|_{\V}\geq l\right\},\ \ m,l\in\N,
	\end{align*}
	where $\inf\emptyset=\infty$.
	Denote  $\displaystyle\tau_1=\lim_{m\rightarrow\infty}\tau_{1,m}$ and $\displaystyle\tau_2=\lim_{l\rightarrow\infty}\tau_{2,l}$. Then,  \eqref{Blow-up criterion} is just a direct consequence  of the  statement
	\begin{align}\label{tau1=tau2}
	\tau_{1}=\tau_{2}\ \ \p-a.s.
	\end{align}  
	Hence it suffices to prove \eqref{tau1=tau2}. Because $\X\hookrightarrow\V$,  it is obvious that 
	$\tau_{1}\leq \tau_2$ $\p-a.s.$
	Therefore, the proof reduces further  to  checking only  $\tau_{1}\geq \tau_2$ $\p-a.s.$
	We first notice that for all $M,l\in\N$,
	\begin{align*}
	\left\{\sup_{t\in[0,\tau_{2,l}\wedge M]}\|X(t)\|_{\X}<\infty\right\}
	=\bigcup_{m\in\N}\left\{\sup_{t\in[0,\tau_{2,l}\wedge M]}\|X(t)\|_{\X}<m\right\}
	\subset\bigcup_{m\in\N}\left\{\tau_{2,l}\wedge M\leq\tau_{1,m}\right\}.
	\end{align*}
	Because $$\bigcup_{m\in\N}\left\{\tau_{2,l}\wedge M\leq\tau_{1,m}\right\}\subset\left\{\tau_{2,l}\wedge M\leq\tau_{1}\right\},$$
	as long as
	\begin{align}
	\p\left( \sup_{t\in[0,\tau_{2,l}\wedge M]}\|X(t)\|_{\X}<\infty\right)=1,
	\ \ \forall\ M,l\in\N,\label{tau2<tau1 condition}
	\end{align}
	we have $\p\left(\tau_{2,l}\wedge M\leq\tau_{1}\right) =1$  for all $M,l\in\N$, and
	\begin{align*}
	\p\left(\tau_2\leq\tau_1\right)
	=\p\left(\bigcap_{l\in\N}\left\{\tau_{2,l}\leq \tau_{1}\right\}\right)
	=\p\left(\bigcap_{M,l\in\N}\left\{\tau_{2,l}\wedge M\leq \tau_{1}\right\}\right)=1.\label{tau2<tau1}
	\end{align*}
	As a result, it remains to prove \eqref{tau2<tau1 condition}.  However,  as mentioned before, we can not directly apply the It\^{o} formula to $\|X\|^2_{\X}$ to get control of $\E\|X(t)\|_{\X}^2$. As in \eqref{TnX 2}, but now  with $Q_\e $, we use It\^{o} formula for  $\|Q_\e X\|^2_{\X}$, apply  the BDG inequality, 
	\ref{b in X} and \ref{blow-up requirement} to find constants $C_1>0$ and $C_2=C_2(l)>0$ such that
	\begin{align*}
	&\hspace*{-1cm}\E\sup_{t\in[0,\tau_{2,l}\wedge M]}\|Q_\e X\|^2_{\X}-\E\|Q_\e X_0\|^2_{\X}\\
	\leq\,&C_1 \E\left(\int_0^{\tau_{2,l}\wedge M}k(t)
	f\big(\|X\|_{\V}\big)\|X\|^2_{\X}\|Q_\e X\|^2_{\X}\dd t\right)^{\frac12}+C_1\E\int_0^{\tau_{2,l}\wedge M}k(t)f\big(\|X\|_{\V}\big)\|X\|^2_{\X}\,{\rm d}t\nonumber\\
	\leq\,&C_{2}\E\left(\sup_{t\in[0,\tau_{2,l}\wedge M]}\|Q_\e X\|_{\X}^{2}\int_0^{\tau_{2,l}\wedge M}k(t)
	\|X\|^2_{\X}\dd t\right)^{\frac12}+C_{2}\E\int_0^{\tau_{2,l}\wedge M}k(t)\|X\|_{\X}^{2}\,{\rm d}t\notag\\
	\leq\,&\frac{1}{2}\E\sup_{t\in[0,\tau_{2,l}\wedge M]}\|Q_\e X\|_{\X}^{2}
	+C_{2}\int_0^{M}k(t)\E\sup_{t'\in[0,t\wedge \tau_{2,l}]}\|X(t')\|_{\X}^{2}\,{\rm d}t.
	\end{align*} 
	This, together with \ref{blow-up requirement}, yields
	\begin{align*}
	\E\sup_{t\in[0,\tau_{2,l}\wedge M]}\|Q_\e X\|^2_{\X}\leq
	2\E\|X_0\|^2_{\X}
	+C_{2}\int_0^{M}k(t)\E\sup_{t'\in[0,t\wedge \tau_{2,l}]}\|X(t')\|_{\X}^{2}\,{\rm d}t.
	\end{align*}

	Since the right hand side of the inequality above does not depend on $\e$, and since $Q_\e $ satisfies \eqref{B21}, we can send $\e\rightarrow0 $ to find
	\begin{align*}
	\E\sup_{t\in[0,\tau_{2,l}\wedge M]}\|X\|^2_{\X}\leq
	2\E\|X_0\|^2_{\X}
	+C_{2}\int_0^{M}k(t)\E\sup_{t'\in[0,t\wedge \tau_{2,l}]}\|X(t')\|_{\X}^{2}\,{\rm d}t.
	\end{align*} 
	Then Gr\"{o}nwall's inequality  shows that for each $l,M\in\N$,
	$$\E\sup_{t\in[0,\tau_{2,l}\wedge M]}\|X(t)\|^2_{\X}
	\leq 2\E\|X_0\|^2_{\X}\exp\left\{C_2\int_0^Mk(t)\,{\rm d}t\right\}<\infty,$$ 
	which gives \eqref{tau2<tau1 condition}.  We conclude the proof of \ref{Abstract irregular SDE: P:time and blow-up} in Theorem \ref{Abstract irregular SDE: T}.

	\section{Applications to nonlinear ideal fluid models with transport noise}\label{sec:application}

	\subsection{Stochastic advection by Lie transport  in  fluid dynamics}\label{Application to SALT}
	Starting with the pioneering works  \cite{Fedrizzi-Flandoli-2013-JFA,Flandoli-Gubinelli-Priola-2010-Invention} for  linear scalar transport equations, many achievements have been made in recent years  for stochastic fluid equations with noise of \textit{transport type}.  Transport-type noise refers to noise  depending linearly on the gradient of the solution. In \cite{Holm-2015-ProcA},  stochastic equations governing the dynamics of some ideal fluid regimes have been derived by employing a novel variational principle for stochastic Lagrangian particle dynamics. Later, the same stochastic evolution equations were rediscovered in \cite{Cotter-etal-2017} using a multi-scale decomposition of the deterministic Lagrangian flow map into a slow large-scale mean, and a rapidly fluctuating small-scale map. 
	In \cite{Holm-2015-ProcA}, the extension of geometric mechanics to include stochasticity in nonlinear fluid theories was accomplished by using Hamilton's variational principle. This extension motivates us to study  stochastic Lagrangian fluid trajectories, denoted as $X_{t}(x, t)$,  arising from the stochastic Eulerian vector field with a noise in the Stratonovich  sense, i.e.,
	\begin{equation}
	\dd X_{t}(x, t) := u(x, t) dt + \displaystyle\sum_{k=1}^{M}\xi_{k}(x)\circ \dd W_{k}.
	\label{SALT}
	\end{equation}
	In \eqref{SALT} $u(x,t)$ means the drift velocity, $\{W_k=W_k(t)\}_{k=1,2,\cdots,M}$ is a family of standard 1-D independent Brownian motions, and $M$ can be determined via the amount of variance required from a principal component analysis, or via empirical orthogonal function analysis.
	
	Deriving continuum-scale  equations  taking into account noise as in \eqref{SALT} is known as  the Stochastic Advection by Lie Transport (SALT) 
	approach, see \cite{Cotter-etal-2019} and the references therein. The SALT approach combines stochasticity in the velocity of the fluid material loop in Kelvin's circulation theorem with ensemble forecasting and meets the important challenge of incorporating stochastic parameterisation at the fundamental level, see for example \cite{Berner-Jung-Palmer-12,Leslie-Quarini-79,Zidikheri-F-10}.

	Many subsequent investigations of the properties of the equations of fluid dynamics with the SALT modification have appeared in the literature recently. For example, local existence in Sobolev spaces and a Beale-Kato-Majda type blow-up criterion were derived in \cite{Crisan-Flandoli-Holm-2018-JNS,Flandoli-Luo-2019} for the incompressible 3-D SALT Euler equations. For the 2-D version, global existence of solutions has been shown in \cite{Crisan-Lang-2019}. In \cite{Alonso-Bethencourt-2020-JNS}, the authors provide a local existence result for the incompressible 2-D SALT Boussinesq equations. For a simpler but still nonlinear equation as the SALT Burgers equation, we refer to \cite{Alonso-etal-2019-NODEA,Flandoli-2011-book}.

	\subsubsection{The two-component CH system with transport noise}
	The  Camassa-Holm (CH) equation
	\begin{equation}\label{CH:equation}
	u_{t}-u_{xxt}+3uu_{x}=2u_{x}u_{xx}+uu_{xxx}
	\end{equation}
	was proposed independently by Fokas and Fuchssteiner in \cite{Fuchssteiner-Fokas-1981-PhyD} and by Camassa and Holm in \cite{Camassa-Holm-1993-PRL}. 
	In \cite{Fuchssteiner-Fokas-1981-PhyD}, it was proposed to consider some completely integrable generalizations of the Korteweg-de-Vries equation with bi-Hamiltonian structures, and in \cite{Camassa-Holm-1993-PRL}, it was derived to describe the unidirectional propagation of shallow water waves over a flat bottom. Solutions of 
	equation \eqref{CH:equation} exhibit 
	the  wave-breaking phenomenon, i.e., smooth global existence may fail \cite{Constantin-Escher-1998,Constantin-Escher-1998-2}.
	Global conservative solutions to the CH equation \eqref{CH:equation} were obtained in \cite{Bresan-Constantin-2007,Holden-Raynaud-2007}. 
	Different stochastic versions of the CH equation have been studied including additive noise \cite{Chen-Gao-Guo-2012-JDE} and multiplicative noise \cite{Albeverio-etal-2019-Arxiv,Rohde-Tang-2020-Arxiv,Tang-2018-SIMA,Tang-2020-Arxiv}. Following the approach in \cite{Holm-2015-ProcA}, the corresponding stochastic version of the CH equation with transport noise was introduced in \cite{Bendall-Cotter-Holm-2019,Crisan-Holm-2018}. Transforming the equation into
	a partial differential equation with random coefficients, the well-posedness of the stochastic CH equation with some special transport noise has been studied in \cite{Albeverio-etal-2019-Arxiv}.     We can extend this result to a far  more complex system:  the  stochastic two-component CH system  which has been  derived in \cite{Holm-Erwin-2019-Arxiv}, i.e., 
	\begin{equation}\label{SCH2-S-integral}
	\left\{\begin{aligned}
	&\dd m+(m\partial_{x}+\partial_{x} m) \dd\chi_{t}+\eta\partial_{x}\eta \dd t=0,\\
	&\dd\eta + (\eta \dd\chi_{t})_{x}=0,\\ 
	&m=u-u_{xx}.
	\end{aligned} \right.
	\end{equation} 
	In \eqref{SCH2-S-integral}
	$u$ is the fluid velocity and $\eta$ denotes the depth of the flow. As in \eqref{SALT}, the noise structure  in \eqref{SCH2-S-integral} is
	$$\dd\chi_{t} =u(t,x) \dd t +\sum_{k=1}^{M} \xi_{k}(x) \circ \dd W_k.$$
	The functions $\xi_1,\ldots, \xi_M$  represent spatial velocity-velocity correlations up to order $M$.\\ 
	Note that the   system  \eqref{SCH2-S-integral} reduces to the scalar CH equation from \cite{Albeverio-etal-2019-Arxiv} if we put $\eta$ to be zero.
	Here we consider $M=\infty$ and  rewrite \eqref{SCH2-S-integral} as
	\begin{equation}\label{SCH2 transport noise}
	\left\{\begin{aligned}
	&\dd m+\left[(mu)_{x} +\eta\eta_x\right] \dd t + \displaystyle\sum_{k=1}^{\infty} \mathcal{L}_{\xi_{k}}m\circ \dd W_k = 0, \\
	&\dd\eta + (\eta u)_{x} \dd t + \displaystyle\sum_{k=1}^{\infty} \mathcal{L}_{\xi_{k}}\eta\circ \dd W_k = 0.
	\end{aligned} \right.
	\end{equation}
	The differential operator $\mathcal{L}_{\xi}$  is given by
	\begin{equation}\label{define Lxi}
	\mathcal{L}_{\xi_k}= \partial_{x}\xi_k + \xi_k\partial_{x}.
	\end{equation}  
	We use the notation $\mathcal{L}_{\xi_k}$ since it coincides with the Lie derivative operator acting on one-forms. However, our analysis is valid for  general  linear differential operators with suitable coefficients.\\
	Calculating the cross-variation term in  the  general transformation formula
	$$\int_0^t f \circ d W   = \int_0^t f \dd W+ \frac{1}{2} \left\langle f, W\right\rangle_t, $$
	we obtain  the corresponding It\^o formulation of \eqref{SCH2 transport noise}, given by 
	\begin{equation}\label{SCH2 transport noise 2}
	\left\{\begin{aligned}
	&\dd m+\left[(mu)_{x} +\eta\eta_x\right] \dd t -\frac{1}{2}\displaystyle\sum_{k=1}^{\infty}\mathcal{L}^2_{\xi_{k}}m \dd{t}  = -\displaystyle\sum_{k=1}^{\infty}\mathcal{L}_{\xi_{k}}m  \dd W_k, \\
	&\dd\eta + (\eta u)_{x} \dd t -\frac{1}{2}\displaystyle\sum_{k=1}^{\infty} \mathcal{L}^2_{\xi_{k}}\eta \dd{t} = -\displaystyle\sum_{k=1}^{\infty}\mathcal{L}_{\xi_{k}}\eta  \dd W_k.
	\end{aligned} \right.
	\end{equation}
	Note that the   operator $\mathcal{L}^2_{\xi_{k}}$  in \eqref{SCH2 transport noise 2} is the  second-order operator 
	$$\mathcal{L}^2_{\xi_{k}} f=\mathcal{L}_{\xi_{k}}(\mathcal{L}_{\xi_{k}} f)=  \xi_k^2\partial^2_{xx}f+ 3\xi_{k}\partial_{x}\xi_k\partial_{x}f + (\xi_k\partial^2_{xx}\xi_{_k}+(\partial_x\xi_k)^2)f. $$
	In this paper, we will consider \eqref{SCH2 transport noise 2} on the periodic torus $\mathbb T=\R/2\pi\mathbb Z$ in terms of the unknowns $(u,\eta)$.
	Therefore, for any real number $s$, we define $D^s=(I-\Delta)^{s/2}$
	as
	$\widehat{D^sf}(k)=(1+|k|^2)^{s/2}\widehat{f}(k)$. Then we apply $(1-\partial^{2}_{xx})^{-1}=D^{-2}$ to \eqref{SCH2 transport noise 2} and consider 
	for  $(u,\eta)$ the nonlocal Cauchy problem 
	\begin{equation}\label{SCH2 Ito form}
	\left\{\begin{aligned}
	&\dd u
	+\left[uu_x+\partial_xD^{-2}\left(\frac{1}{2}u^2+u_x^2+\frac{1}{2}\eta^2\right)-\frac{1}{2}D^{-2}\sum_{k=1}^{\infty}\mathcal{L}^2_{\xi_{k}}D^2u \right] \dd t=-D^{-2}\sum_{k=1}^{\infty}\mathcal{L}_{\xi_{k}}D^2u \dd W_k, \\
	&\dd\eta + \left(u\eta_x+\eta u_x\right)\ \dd t 
	- \frac{1}{2}\sum_{k=1}^{\infty} \mathcal{L}^2_{\xi_{k}}\eta\ \dd{t} = -\sum_{k=1}^{\infty}\mathcal{L}_{\xi_{k}}\eta  \dd W_k,\\
	&(u(0),\eta(0))=(u_0,\eta_0).
	\end{aligned} \right.
	\end{equation}
	Here we remark that in \eqref{SCH2 Ito form}, $f=D^{-2}g=(I-\partial^2_{xx})^{-1}g$ means $f=\mathcal{G}\star g$, where $\mathcal{G}$ is the Green function of the Helmholtz operator $(I-\partial^2_{xx})$ and $\star$ stands for the convolution. The local theory for \eqref{SCH2 Ito form} is stated in Theorem \ref{SCH2 results} below.
	
	\subsubsection{The CCF model with transport noise}
	As the second application of the abstract framework, we will consider a stochastic transport equation with non-local velocity on the periodic torus $\T$. In the deterministic case,  it reads
	\begin{equation}\label{CCF:model}
	\theta_{t}+(\mathcal{H}\theta) \theta_{x}=0,
	\end{equation}
	where
	$\mathcal{H}$ is the periodic Hilbert transform defined by
	\begin{equation}\label{define Hilbert transform}
	(\mathcal{H}f)(x)=\frac{1}{2\pi}\ {\rm p.v.}\int_0^{2\pi}f(t)\cot\left(\frac{x-t}{2}\right)\dd t.
	\end{equation}  
	Equation \eqref{CCF:model} was proposed by C\'ordoba, C\'ordoba and Fontelos in \cite{Cordoba-etal-2005-Annals} to consider advective transport  with 
	non-local velocity.
	It is deeply connected to the 2-D SQG equation and hence with the 3-D Euler equations (cf.~\cite{Bae-Granero-2015-AM} and the references therein). Notice that, if we replace the non-local Hilbert transform by the identity operator we recover the classical Burgers equation. 
	In \cite{Cordoba-etal-2005-Annals},  the breakdown of classical solutions to \eqref{CCF:model} for a generic
	class of smooth initial data was discovered.\\ 
	To the best of our knowledge, the stochastic counterpart of the CCF model \eqref{CCF:model} has not been studied yet. In this paper, we will consider the stochastic CCF model with transport noise, i.e.,
	\begin{equation}\label{Transport:nonloca:eq}
	\dd\theta + \left(\mathcal{H}\theta\right)\partial_{x}\theta \ dt + \displaystyle\sum_{k=1}^{\infty} \mathcal{L}_{\xi_{k}}\theta\circ \dd W_k=0,
	\end{equation}
	where
	$\{W_k=W_k(t)\}_{k\in\N}$ is a sequence of standard 1-D independent Brownian motions and $\mathcal{L}_{\xi_{k}}$ is given  as in \eqref{define Lxi}. 
	Using the corresponding It\^{o} formulation, we  are led to the Cauchy problem
	\begin{equation}\label{Transport:nonloca:eq:ito}
	\left\{\begin{aligned}
	&\dd\theta + (\mathcal{H}\theta)\partial_{x}\theta \ dt -\frac{1}{2}\displaystyle\sum_{k=1}^{\infty} 
	\mathcal{L}^{2}_{\xi_{k}}\theta \ \dd t= -\displaystyle\sum_{k=1}^{\infty} 
	\mathcal{L}_{\xi_{k}}\theta \dd W_k,\\
	&\theta(0)=\theta_0.
	\end{aligned}\right.
	\end{equation}
	A local theory for \eqref{Transport:nonloca:eq:ito} is stated in Theorem \ref{transport:nonlocal results} below.
	
	\subsection{Notations, assumptions and main results}\label{sect:applications:main results}
	
	To state the main results for \eqref{SCH2 Ito form} and \eqref{Transport:nonloca:eq:ito}, we  introduce some function spaces.
	For   $d\in\N$ and $1\leq p<\infty$ we denote by  $L^p(\T^d;\R)$ the standard Lebesgue space of measurable $p$-integrable $\R$-valued functions with domain $\T^d=(\R/2\pi\mathbb Z)^d$ and by  $L^\infty(\T^d;\R)$ the space of essentially bounded functions. Particularly, $L^2(\T^d;\R)$ is equipped with the inner product 
	$
	(f,g)_{L^2}=\int_{\T^d}f\cdot\overline{g}\,{\rm d}x,
	$
	where $\overline{g}$ denotes the complex conjugate of $g$.  The Fourier
	transform and inverse Fourier transform of $f(x)\in L^2(\T^d;\R)$ are defined by
	$\widehat{f}(\xi)=\int_{\T^d}f(x){\rm e}^{-{\rm i}x\cdot \xi}\,{\rm d}x$ and 
	$f(x)=\frac{1}{(2\pi)^d}\sum_{k\in{\mathbb Z}^d}\widehat{f}(k){\rm e}^{{\rm i}x\cdot k}$, respectively. Recalling that for any $s\in\R$,  
	$\widehat{D^sf}(k)=(1+|k|^2)^{s/2}\widehat{f}(k)$, we define the Sobolev space $H^s$ on $\T^d$ with values in $\R$ as
	\begin{align*}
	H^s(\T^d;\R):=\left\{f\in L^2(\T^d;\R):\|f\|_{H^s(\T^d;\R)}^2
	=\sum_{k\in{\mathbb Z}^d}|\widehat{D^sf}(k)|^2<+\infty\right\}.
	\end{align*}
	For $u=(u_j)_{1\leq j\leq n}$: $\T^d\mapsto\R^n$, we define
	$  
	\|u\|^2_{H^s(\T^d;\R^n)}:=\sum_{j=1}^n\|u_j\|^2_{H^s(\T^d;\R)}.
	$
	For the sake of simplicity, we omit the parentheses in the above notations from now on if there is no ambiguity.
	Similarly, for two spaces $H^{s_1}$ and $H^{s_2}$ ($s_1,s_2>0$) and $(f,g)\in H^{s_1}\times H^{s_2}$,  we define $\|(f,g)\|^2_{H^{s_1}\times H^{s_2}}:=\|f\|^2_{H^{s_1}}+\|g\|^2_{H^{s_2}}$. The  commutator  for two operators $P,Q$ is denoted by
	$[P,Q]:= PQ-QP.$ 
	The space of linear operators from  $\U$ to some separable Hilbert space $\mathbb X$ is denoted by $\LL(\U;\mathbb X)$.

	To obtain a local theory for \eqref{SCH2 Ito form} and \eqref{Transport:nonloca:eq:ito}, we have to impose natural regularity assumptions on $\{\xi_{k}(x)\}_{k\in\N}$ to give a reasonable meaning to the stochastic integral and to show certain estimates. For this reason, we make the following assumption:
	\begin{Assumption}\label{Assum-xi}
		$\sum_{k\in\N}\|\xi_k\|_{H^s}<\infty$ for any $s\geq0$. 
		
	\end{Assumption}
	
	\begin{Remark}\label{Remark xi with f}
		It follows from Assumption \ref{Assum-xi} that
		there is a $C>0$ such that	for all $f\in H^{s+2}$ with $s>\frac{1}{2}$, we have
		\begin{equation*}
		\sum_{k=1}^\infty\norm{\mathcal{L}_{\xi_k}f}_{H^{s}} \leq C \norm{f}_{H^{s+1}}\ \ \text{and}\ \ \ 
		\sum_{k=1}^\infty\norm{\mathcal{L}^2_{\xi_k}f}_{H^{s}} \leq C \norm{f}_{H^{s+2}}.
		\end{equation*}
		Besides, we do not require that $\{\xi_k\}_{k\in\N}$ is an orthogonal system.
	\end{Remark}

	The main results for \eqref{SCH2 Ito form} and \eqref{Transport:nonloca:eq:ito} are the following:
	\begin{Theorem}\label{SCH2 results}
		Let	$s>\frac{11}{2}$ and $\s=(\Omega, \mathcal{F},\p,\{\mathcal{F}_t\}_{t\geq0}, \W)$ be a  stochastic basis fixed in advance. Let Assumption \ref{Assum-xi} hold.  If
		$(u_0,\eta_0)\in L^2(\Omega;H^s\times H^{s-1})$ is an $\mathcal{F}_0$-measurable random variable, then \eqref{SCH2 Ito form} has a local unique pathwise solution $((u,\eta),\tau)$ such that 
		\begin{equation}\label{SCH2 solution bound}
		(u,\eta)(\cdot\wedge \tau)\in L^2\left(\Omega; C\left([0,\infty);H^s\times H^{s-1}\right)\right).
		\end{equation} 
		Moreover, the maximal solution $((u,\eta),\tau^*)$ to \eqref{SCH2 Ito form} satisfies
		\begin{equation*}
		\textbf{1}_{\left\{\limsup_{t\rightarrow \tau^*}\|(u,\eta)(t)\|_{H^s\times H^{s-1}}=\infty\right\}}=\textbf{1}_{\left\{\limsup_{t\rightarrow \tau^*}\|(u,\eta)(t)\|_{W^{1,\infty}\times W^{1,\infty}}=\infty\right\}}\ \ \p-a.s.
		\end{equation*}
	\end{Theorem}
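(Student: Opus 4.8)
The plan is to recognise the Cauchy problem \eqref{SCH2 Ito form} as a particular instance of the abstract equation \eqref{Abstact irregular SDE} for the unknown $X=(u,\eta)$ and then to check that the coefficients obey Assumption \ref{Assum-A}; the conclusions of Theorem \ref{Abstract irregular SDE: T}\,\ref{Abstract irregular SDE: P}--\ref{Abstract irregular SDE: P:time and blow-up} then give exactly \eqref{SCH2 solution bound} and the blow-up dichotomy. The spaces are chosen according to how many derivatives each term costs,
\[
\X=H^s\times H^{s-1},\quad \Y=H^{s-1}\times H^{s-2},\quad \Z=H^{s-2}\times H^{s-3},\quad \V=W^{1,\infty}\times W^{1,\infty},
\]
so that $\X\hookrightarrow\Y\hookrightarrow\hookrightarrow\Z$ (compactly, since $\T$ is compact) and $\Z\hookrightarrow\V$; the restriction $s>\tfrac{11}{2}$ is what the product, commutator and embedding estimates below require. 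The nonlocal term $\partial_x D^{-2}\big(\tfrac12 u^2+u_x^2+\tfrac12\eta^2\big)$ gains one derivative and, together with $\eta u_x$, is invariant on $\X$, so it is taken as the regular drift $b$. The transport parts $uu_x$, $u\eta_x$ lose one derivative and the Stratonovich--It\^o corrections $\tfrac12 D^{-2}\mathcal{L}^2_{\xi_k}D^2u$, $\tfrac12\mathcal{L}^2_{\xi_k}\eta$ lose two, so all of them form the singular drift $g\colon\X\to\Z$, while $h(X)e_k=\big(-D^{-2}\mathcal{L}_{\xi_k}D^2u,\,-\mathcal{L}_{\xi_k}\eta\big)$ loses one derivative and is the singular diffusion $h\colon\X\to\LL_2(\U;\Y)$, the summability over $k$ coming from Assumption \ref{Assum-xi}.

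For the regularizations I would use Friedrichs mollifiers $J_\e$ and set $g_\e(t,X)=J_\e\,g(t,J_\e X)$, $h_\e(t,X)=J_\e\,h(t,J_\e X)$ and $T_\e=Q_\e=J_\e$. Since $J_\e$ smooths every Sobolev function into $H^\infty\subset\X$, the maps $g_\e,h_\e$ are genuinely $\X$-valued, which gives \eqref{A22}--\eqref{A23} with constants $C_{\e,N}$ exploding as $\e\to0$. The uniform $\Z$/$\Y$-bounds \eqref{A21}, the $\Z$-continuity \eqref{A13}, \eqref{B11} and the convergences \eqref{A24}--\eqref{A25}, as well as condition \ref{b in X} for $b$, follow from Moser/Kato--Ponce product and commutator estimates together with $\|J_\e f\|\le\|f\|$ and $J_\e f\to f$; these are routine once the spaces are fixed.

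The heart of the matter is the renormalization condition \ref{growth gn hn} and the cancellation conditions inside \ref{uniqueness requirement}, \ref{continuity requirement} and \ref{blow-up requirement}, all of which require the top-order contribution of the noise to cancel. The mechanism is that, for the generalized Lie derivative, the It\^o drift cancels the quadratic variation to leading order: in $L^2$,
\[
\big(\mathcal{L}^2_{\xi}f,f\big)_{L^2}+\big\|\mathcal{L}_{\xi}f\big\|^2_{L^2}=\big(\mathcal{L}_{\xi}f,(\mathcal{L}_{\xi}+\mathcal{L}^{\ast}_{\xi})f\big)_{L^2},
\]
and $\mathcal{L}_{\xi}+\mathcal{L}^{\ast}_{\xi}$ reduces to multiplication by $\xi_x$, an operator of order zero, so that after one integration by parts the right-hand side is $\lesssim\|\xi\|^2_{W^{2,\infty}}\|f\|^2_{L^2}$; at the $H^\sigma$ level the same structure survives modulo commutators with $D^\sigma$, which is precisely Lemma \ref{Liecancellations}. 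I would apply this with $\sigma=s$ (resp. $s-1$) to the two components, absorbing the conjugation $D^{-2}\mathcal{L}_{\xi_k}D^2=\mathcal{L}_{\xi_k}+D^{-2}[\mathcal{L}_{\xi_k},D^2]$ into a bounded remainder, and handle the transport terms $uu_x$, $u\eta_x$ by the commutator bound $\|[D^\sigma,u]\partial_x v\|_{L^2}\lesssim\|u_x\|_{L^\infty}\|v\|_{H^\sigma}+\|u\|_{H^\sigma}\|\partial_x v\|_{L^\infty}$; this produces exactly the factor $f(\|X\|_{\V})$ with $\V=W^{1,\infty}\times W^{1,\infty}$ demanded in \eqref{A32} and \eqref{blow-up requirement 1}--\eqref{blow-up requirement 2}. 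For \ref{uniqueness requirement} the identical computation is run for the difference of two solutions in the weaker norm $\Z$, where the derivative lost by $h$ is again recovered only through the cancellation with the It\^o term.

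The main obstacle, and the reason the mollifiers cannot be removed, is to make these cancellations \emph{uniform in} $\e$: since $J_\e$ commutes neither with $D^\sigma$ nor with $\mathcal{L}_{\xi_k}$, inserting $J_\e$ in $g_\e,h_\e$ and $T_\e=Q_\e=J_\e$ in \eqref{B23}--\eqref{blow-up requirement 1} generates commutators $[J_\e,\mathcal{L}_{\xi_k}]$ and $[J_\e,D^\sigma]$ that must be bounded independently of $\e$ by the standard mollifier commutator lemmas. A secondary difficulty is the bookkeeping of the two-component coupling: the cross terms $\eta\eta_x$, $u\eta_x$ and $\eta u_x$ tie the $H^s$ estimate for $u$ to the $H^{s-1}$ estimate for $\eta$, and the two energy balances must be added so that the combined bound closes quadratically in $\|(u,\eta)\|_{\X}$ with a coefficient depending only on $\|(u,\eta)\|_{\V}$. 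Once \ref{b in X}--\ref{blow-up requirement} are verified, Theorem \ref{Abstract irregular SDE: T} delivers the local unique pathwise solution, the estimate \eqref{SCH2 solution bound}, and the $\X$-versus-$\V$ blow-up criterion.
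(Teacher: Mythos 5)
Your architecture coincides with the paper's: the same triple of spaces (your $\Y,\Z$ are exactly $\X^{s-1}=H^{s-1}\times H^{s-2}$ and $\X^{s-2}=H^{s-2}\times H^{s-3}$ in the paper's notation \eqref{U Xs V SCH2}), the same assignment of $-G(u,\eta)$ and $-\eta u_x$ to the regular drift $b$ and of the transport and It\^{o}-correction terms to the singular $g$ and $h$, mollifier-built $g_\e,h_\e$, and Lemma \ref{Liecancellations} as the engine behind \ref{growth gn hn}, \ref{uniqueness requirement} and \ref{blow-up requirement}. Two of your choices differ harmlessly from the paper's: the paper takes $T_\e=Q_\e=\tilde J_\e=(1-\e^2\Delta)^{-1}$ so that Lemma \ref{Te commutator} applies verbatim (a Friedrichs analogue would also do), and it places the mollifiers as $J_\e^3(\cdots)J_\e$ in front of the It\^{o} correction so that, after the self-adjointness \eqref{Je Te self-adjoint}, the relevant pairing is literally $\left(\mathcal{P}\sum_k\mathcal{L}^2_{\xi_k}f,\mathcal{P}f\right)_{L^2}+\sum_k\|\mathcal{P}\mathcal{L}_{\xi_k}f\|^2_{L^2}$ with $\mathcal{P}=D^{\sigma}J_\e$; your $g_\e=J_\e g(J_\e\cdot)$ also works because $\|J_\e\cdot\|_{L^2}\le\|\cdot\|_{L^2}$ lets you dominate the quadratic-variation term.

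The gap is in your treatment of the first component of the noise. You propose to write $D^{-2}\mathcal{L}_{\xi_k}D^2=\mathcal{L}_{\xi_k}+R_k$ with $R_k=D^{-2}[\mathcal{L}_{\xi_k},D^2]\in \mbox{OPS}^{0}_{1,0}$ and to absorb $R_k$ into a bounded remainder while running the cancellation at level $\sigma=s$. But Lemma \ref{Liecancellations} controls $\left(\mathcal{P}\mathcal{L}^2_{\xi_k}f,\mathcal{P}f\right)_{L^2}+\|\mathcal{P}\mathcal{L}_{\xi_k}f\|^2_{L^2}$ for the operator $\mathcal{L}_{\xi_k}$ itself, not for $\mathcal{L}_{\xi_k}+R_k$; expanding the perturbed expression produces cross terms such as $2\left(\mathcal{L}_{\xi_k}u,R_ku\right)_{H^s}$ in the quadratic variation, which are a priori only $O(\|u\|_{H^{s+1}}\|u\|_{H^s})$ because $R_k$ is of order zero rather than order $-1$, and they do not cancel term by term against the cross terms generated by $(\mathcal{L}_{\xi_k}+R_k)^2u$ without redoing the delicate commutator analysis of the appendix for the perturbed operator. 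The paper avoids this entirely: since $D^{\sigma}$ commutes with $D^{\pm2}$, one has $\|D^{-2}\mathcal{L}_{\xi_k}D^2u\|^2_{H^s}=\|D^{s-2}\mathcal{L}_{\xi_k}(D^2u)\|^2_{L^2}$ and $\left(D^{-2}\sum_k\mathcal{L}^2_{\xi_k}D^2u,u\right)_{H^s}=\left(D^{s-2}\sum_k\mathcal{L}^2_{\xi_k}(D^2u),D^{s-2}(D^2u)\right)_{L^2}$, so Lemma \ref{Liecancellations} applies directly with $\mathcal{P}=D^{s-2}T_\e\in\mbox{OPS}^{s-2}_{1,0}$ and $f=D^2u$, giving the bound $\lesssim\|D^2u\|^2_{H^{s-2}}=\|u\|^2_{H^s}$ with no conjugation remainder at all (and this is also where the requirement $s-4>\tfrac32$, i.e.\ $s>\tfrac{11}{2}$, enters when the same identity is used at level $\Z$ for \eqref{B12}). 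Replace your perturbative step by this exact substitution; the rest of your verification of \ref{b in X}--\ref{blow-up requirement} then proceeds as in the paper.
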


	\begin{Theorem}\label{transport:nonlocal results}
		Let	$s>\frac{7}{2}$ and $\s=(\Omega, \mathcal{F},\p,\{\mathcal{F}_t\}_{t\geq0}, \W)$ be a  stochastic basis fixed in advance. Let Assumption \ref{Assum-xi} hold.  If
		$\theta_0\in L^2(\Omega;H^s)$ is an $\mathcal{F}_0$-measurable random variable, then \eqref{Transport:nonloca:eq:ito} has a local unique pathwise solution $(\theta,\tau)$ such that 
		\begin{equation}\label{Trans:nonlocal:bound}
		\theta(\cdot\wedge \tau)\in L^2\left(\Omega; C\left([0,\infty);H^s\right)\right).
		\end{equation} 
		Moreover, the maximal solution $(\theta,\tau^*)$ to \eqref{Transport:nonloca:eq:ito} satisfies 
		\begin{equation}\label{STNL blowup criterion}
		\textbf{1}_{\left\{\limsup_{t\rightarrow \tau^*}\|\theta(t)\|_{H^s}=\infty\right\}}
		=\textbf{1}_{\left\{\limsup_{t\rightarrow \tau^*}\|\theta_x(t)\|_{L^{\infty}}+\|(\mathcal{H}\theta_x)(t)\|_{L^{\infty}}=\infty\right\}}\ \ \p-a.s.
		\end{equation}
	\end{Theorem}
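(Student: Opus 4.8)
The plan is to realize the Cauchy problem \eqref{Transport:nonloca:eq:ito} as an instance of the abstract equation \eqref{Abstact irregular SDE} and then invoke Theorem \ref{Abstract irregular SDE: T}. I take $X=\theta$ with the scale $\X=H^s$, $\Y=H^{s-1}$, $\Z=H^{s-2}$, and let $\V$ be the Banach space carrying the norm $\norm{f}_{L^\infty}+\norm{\partial_x f}_{L^\infty}+\norm{\mathcal{H}\partial_x f}_{L^\infty}$ which, up to the lower-order term, is the quantity on the right of \eqref{STNL blowup criterion}. On $\T$ the Rellich theorem gives $\Y\hookrightarrow\hookrightarrow\Z$, and the Sobolev embedding $\Z=H^{s-2}\hookrightarrow\V$ holds precisely because $s>\tfrac72$ forces $s-3>\tfrac12$, which is what controls both $\partial_x f$ and $\mathcal{H}\partial_x f$ in $L^\infty$ for $f\in H^{s-2}$; this is exactly where the regularity threshold enters. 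I set $b\equiv0$ (there is no regular part in \eqref{Transport:nonloca:eq:ito}, and Theorem \ref{Abstract irregular SDE: T} allows this) and read off the singular coefficients $g(\theta)=-(\mathcal{H}\theta)\partial_x\theta+\tfrac12\sum_k\mathcal{L}^2_{\xi_k}\theta$ and $h(\theta)e_k=-\mathcal{L}_{\xi_k}\theta$. Using that $H^s$ is an algebra together with Remark \ref{Remark xi with f} and Assumption \ref{Assum-xi}, one checks $g:\X\to\Z$ and $h:\X\to\LL_2(\U;\Y)$, so the mapping hypotheses of Assumption \ref{Assum-A} are met.

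Next I construct the regularizations. Let $\{J_\e\}_{\e\in(0,1)}$ be Friedrichs mollifiers on $\T$ (say $\widehat{J_\e f}(k)=\rho(\e k)\widehat f(k)$ with $0\le\rho\le1$, $\rho(0)=1$), and set $T_\e=Q_\e=J_\e$, so that \eqref{B21} follows from the standard mollifier properties and the density of $H^s$ in $H^{s-2}$. I define $g_\e$ and $h_\e$ by inserting $J_\e$ into $g$ and $h$ (for instance $h_\e(\theta)e_k=-J_\e\mathcal{L}_{\xi_k}J_\e\theta$ and $g_\e$ analogously), which turns them into maps $\X\to\X$ and $\X\to\LL_2(\U;\X)$ that are locally Lipschitz on balls of $H^s$; this yields \ref{gn hn condition} apart from the cancellation bounds, together with the $\e$-uniform $\Z$-bounds in \eqref{A21}. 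The convergences \eqref{A24}--\eqref{A25} as $\e\to0$ then follow from $J_\e\to I$ strongly and the uniform $H^s$-bounds provided by the abstract scheme.

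The analytic core is the family of cancellation estimates feeding \ref{growth gn hn}, \ref{uniqueness requirement}, \ref{continuity requirement} and \ref{blow-up requirement}. For the transport part I pair $D^s\big[(\mathcal{H}\theta)\partial_x\theta\big]$ with $D^s\theta$ in $L^2$, commute $D^s$ past the coefficient $\mathcal{H}\theta$ by a Kato--Ponce estimate, and integrate by parts in the top-order term, so that the derivative-losing contribution collapses to $\tfrac12\int_{\T}\partial_x(\mathcal{H}\theta)\,\abs{D^s\theta}^2\,\dd x$; this is bounded by $\norm{\mathcal{H}\partial_x\theta}_{L^\infty}\norm{\theta}_{H^s}^2$, which is exactly why $\mathcal{H}\partial_x\theta$ must be monitored in \eqref{STNL blowup criterion}. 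For the noise together with its It\^{o} corrector I invoke the cancellation property of generalized Lie derivatives, Lemma \ref{Liecancellations}: since $\mathcal{L}_{\xi_k}$ is first order and skew-adjoint in $L^2$, the combination $2\big(\tfrac12\sum_k\mathcal{L}^2_{\xi_k}\theta,\theta\big)_{H^s}+\sum_k\norm{\mathcal{L}_{\xi_k}\theta}_{H^s}^2$ has its top-order parts cancel and is controlled by $C\norm{\theta}_{H^s}^2$. Carried out with $J_\e$ in place, these two mechanisms give the $\e$-uniform bounds \eqref{A31}--\eqref{A32} and \eqref{B23}--\eqref{blow-up requirement 1}; repeating the transport estimate in the weaker norm $H^{s-2}$, where the nonlinearity is now handled by plain product estimates using the $H^s$-bound $N$ on the arguments, yields the monotonicity bound \eqref{B12}, while \eqref{B11} is trivial since $b\equiv0$.

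The step I expect to be most delicate is making all of these estimates uniform in the mollification parameter $\e$: the operator $J_\e$ commutes with neither $(\mathcal{H}\theta)\partial_x$ nor $\mathcal{L}_{\xi_k}$, so I must show that the extra commutators $[J_\e,\mathcal{H}\theta]\partial_x\theta$ and $[J_\e,\mathcal{L}_{\xi_k}]$ remain bounded as $\e\to0$, so that the regularized energy balance reproduces the formal one without destroying the cancellations; this is precisely what the chosen form of $g_\e,h_\e$ and Lemma \ref{Liecancellations} are designed to deliver. Once Assumptions \ref{b in X}--\ref{blow-up requirement} are verified, Theorem \ref{Abstract irregular SDE: T} furnishes the local unique pathwise solution with the bound \eqref{Trans:nonlocal:bound}, and its part \ref{Abstract irregular SDE: P:time and blow-up} yields the blow-up dichotomy in $\X=H^s$ versus $\V$, which is exactly the criterion \eqref{STNL blowup criterion}.
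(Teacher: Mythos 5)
Your reduction to the abstract framework, the choice $\X=H^s$, $\Y=H^{s-1}$, $\Z=H^{s-2}$, the mollified coefficients $g_\e,h_\e$, the Kato--Ponce treatment of the transport term and the use of Lemma \ref{Liecancellations} for the noise plus It\^o corrector all coincide with the paper's proof (the paper takes $T_\e=Q_\e=\tilde J_\e=(1-\e^2\Delta)^{-1}$ rather than the Friedrichs mollifier, which only changes which commutator lemma is cited). The genuine divergence is in how the blow-up criterion is obtained. The paper takes $\V=H^r$ with $r\in(3/2,s-2)$, gets from Theorem \ref{Abstract irregular SDE: T} the criterion in terms of $\|\theta\|_{H^r}$, and then \emph{upgrades} it to \eqref{STNL blowup criterion} by a second, separate Gr\"onwall argument on $[0,\sigma_{2,l}\wedge N]$ built on the refined estimates \eqref{STNL:refined estimate 1}--\eqref{STNL:refined estimate 2}, whose right-hand sides involve only $\|\theta_x\|_{L^\infty}+\|\mathcal{H}\theta_x\|_{L^\infty}$. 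You instead try to encode the target quantities directly into $\V$ so that part \ref{Abstract irregular SDE: P:time and blow-up} delivers \eqref{STNL blowup criterion} in one shot.

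That shortcut has a gap you acknowledge but do not close. The quantity $\|\theta_x\|_{L^\infty}+\|\mathcal{H}\theta_x\|_{L^\infty}$ is only a seminorm (it vanishes on constants), so to make $\V$ a Banach space you must add a lower-order term such as $\|\theta\|_{L^\infty}$; the abstract theorem then yields equivalence of $H^s$-blow-up with blow-up of $\|\theta\|_{L^\infty}+\|\theta_x\|_{L^\infty}+\|\mathcal{H}\theta_x\|_{L^\infty}$, which is a weaker statement than \eqref{STNL blowup criterion}. To discard the $\|\theta\|_{L^\infty}$ term you must rule out the scenario where it blows up while $\|\theta_x\|_{L^\infty}+\|\mathcal{H}\theta_x\|_{L^\infty}$ stays bounded --- e.g.\ by controlling the evolution of the spatial mean of $\theta$, or by running exactly the refined stopped-Gr\"onwall argument the paper uses. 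So either way the second step is not avoided; as written, "up to the lower-order term" is an unproved assertion. Two smaller imprecisions: $\mathcal{L}_{\xi_k}f=(\xi_k f)_x$ is not skew-adjoint in $L^2$ (its adjoint is $-\xi_k\partial_x$, so skew-adjointness holds only for the principal part --- this is precisely why Lemma \ref{Liecancellations}, rather than a bare integration by parts, is needed); and the threshold $s>7/2$ is forced not only by the embedding $H^{s-2}\hookrightarrow W^{1,\infty}$ but also by the hypothesis $s-2>\tfrac d2+1$ of Lemma \ref{Liecancellations} applied at the level of $\Z=H^{s-2}$ in the uniqueness estimate \eqref{B12}.
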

	
	\begin{Remark}
		We require $s>11/2$ in Theorem \ref{SCH2 results}. This is because, if $(u,\eta)\in H^s\times H^{s-1}$,  then  $\left(-\frac{1}{2}D^{-2}\sum_{k=1}^{\infty}\mathcal{L}^2_{\xi_{k}}D^2u,-\frac{1}{2}\sum_{k=1}^{\infty}\mathcal{L}^2_{\xi_{k}}\eta\right)\in H^{s-2}\times H^{s-3}$. 
		As one can see $\left|\left(uu_x,u\right)_{H^s}\right|+\left|\left(u\eta_x,\eta\right)_{H^{s-1}}\right|\lesssim\, \|(u,\eta)\|_{W^{1,\infty}\times W^{1,\infty}}\|(u,\eta)\|^2_{H^s\times H^{s-1}}$. To apply Theorem \ref{Abstract irregular SDE: T} to \eqref{SCH2 Ito form} with $\X=H^s\times H^{s-1}$, we have to  verify \eqref{B12} with using Lemma \ref{Liecancellations}.  Therefore $s-4>\frac{3}{2}$, which means $s>11/2$. Similarly, $s>7/2$ is needed in Theorem \ref{transport:nonlocal results}.\\
		As mentioned before, the scalar stochastic CH equation with  transport noise  has been analyzed in  \cite{Albeverio-etal-2019-Arxiv} with a completely
		different approach. The authors obtain the local existence of pathwise solutions  in  a less regular space but without a blow-up criterion.  We note 
		that our approach can be also applied to this equation to give local existence, uniqueness and the blow-up criterion. 
	\end{Remark}
	
	\begin{Remark}\label{blow-up remark}
		Notice that in the deterministic case, one can use the estimate
		\begin{equation}\label{Dong estimate}
		\|\mathcal{H}\theta_x\|_{L^\infty}\lesssim\, 
		\left(1+\|\theta_x\|_{L^\infty}\log\left({\rm e}+\|\theta_x\|_{H^1}\right)+\|\theta_x\|_{L^2}\right)
		\end{equation}
		to improve  the blow-up criterion \eqref{STNL blowup criterion} into (cf.~\cite{Dong-2008-JFA})
		$$ \limsup_{t\rightarrow \tau^*}\|\theta(t)\|_{H^s}=\infty
		\Longleftrightarrow  \limsup_{t\rightarrow \tau^*}\|\theta_x(t)\|_{L^{\infty}}=\infty.$$
		To achieve this in the stochastic setting, we have an essential difficulty in closing the $H^s$-estimate. That is, one has to split the expectation $\E\|\mathcal{H}\theta_x\|_{L^\infty}\|\theta\|^2_{H^s}$. If we use \eqref{Dong estimate}, so far we have not known how to close the estimate for 
		$\E\|\theta\|^2_{H^s}$, where $\E\left[\left(1+\|\theta_x\|_{L^\infty}\log\left({\rm e}+\|\theta_x\|_{H^1}\right)+\|\theta_x\|_{L^2}\right)\|\theta\|_{H^s}\right]$ is involved.
	\end{Remark}

	\subsection{The stochastic two-component CH system: Proof of Theorem \ref{SCH2 results}}\label{Sec:SCH2}
	Now we consider \eqref{SCH2 Ito form} on the periodic torus $\mathbb T$, and  we will apply the abstract framework developed in Section \ref{sect: Abstact irregular SDE}  to obtain Theorem \ref{SCH2 results}. 
	To put \eqref{SCH2 Ito form} into the abstract framework, we define
	\begin{equation*}
	X=(u,\eta),\ G(u,\eta)=\partial_xD^{-2}\left(\frac{1}{2}u^2+u_x^2+\frac{1}{2}\eta^2\right),
	\end{equation*}
	and we set
	\begin{equation}\label{Define drift}
	\begin{array}{rcccl}
	b(t,X) &=&b(X)&=&\left(-G(u,\eta),-\eta u_x\right),\\[1.0ex]
	g(t,X)&=&g(X)&=&  \left(-uu_x+\frac{1}{2}D^{-2}\sum_{k=1}^{\infty}\mathcal{L}^2_{\xi_{k}}D^2u,
	-u\eta_x+\frac{1}{2}\sum_{k=1}^{\infty} \mathcal{L}^2_{\xi_{k}}\eta\right),\\[2.0ex]
	h^k(t,X)&=&h^k(X)&=&\displaystyle \left(-D^{-2}\mathcal{L}_{\xi_{k}}D^2u,-\mathcal{L}_{\xi_{k}}\eta\right),\ \ k\in\N.
	\end{array} 
	\end{equation}
	
	Now we recall that $\U$ is a fixed separable Hilbert space and  $\{e_i\}_{i\in\N}$ is a complete orthonormal basis of $\U$ such that the cylindrical Wiener process $\W$ is defined as in \eqref{define W}. Then we  define $h(X)\in \LL(\U;H^s\times H^{s-1})$ such that
	\begin{align}\label{Define diffusion}
	h(X)(e_k)=h^{k}(X)=\left(-D^{-2}\mathcal{L}_{\xi_{k}}D^2u,-\mathcal{L}_{\xi_{k}}\eta\right),\ \ k\in\N.
	\end{align}
	Altogether we can  rewrite the problem \eqref{SCH2 Ito form} as
	\begin{equation}\label{Abstarct SCH2}
	\left\{\begin{aligned}
	&\dd X=\left(b(X)+g(X)\right)\dd t+h(X)\dd\W,\\
	&X(0)=X_0=(u_0,\eta_0).
	\end{aligned}\right.
	\end{equation}
	
	In order to prove Theorem \ref{SCH2 results} by applying Theorem \ref{Abstract irregular SDE: T}, we  need to check that Assumption \ref{Assum-A}  is satisfied.  To ease notation, we 
	define  
	\begin{equation}\label{U Xs V SCH2}
	\X^s=H^s\times H^{s-1}
	\end{equation}
	and make the following choice for the spaces $\X \subset \Y \subset \Z$  and $\Z\subset\V$,
	\begin{equation}\label{XYZCS}
	\X=\X^s, \,  \Y=\X^{s-1}, \, \Z=\X^{s-2}, \, \V=W^{1,\infty}\times W^{1,\infty}.
	\end{equation}

	\subsubsection{Estimates on nonlinear terms\label{subsubest}}
	In this preparatory part,  some basic Sobolev estimates to deal with $b,g,h$ from \eqref{Define drift}, \eqref{Define diffusion} are introduced.

	\begin{Lemma}\label{Lemma for b}
		Let $s>5/2$. Then $b$ is regular in $\X$ and  for $X=(u,\eta)\in \X^s$, $Y=(v,\rho)\in \X^s$, we have  
		\begin{align*}
		\|b(X)\|_{\X^s} &\lesssim\, 
		\|X\|_{\V}\|X\|_{\X^s}. \\
		\|b(X)-b(Y)\|_{\X^s} &\lesssim\, \left(\|X\|_{\X^s}+\|Y\|_{\X^s}\right)\|X-Y\|_{\X^s}.
		\end{align*}
	\end{Lemma}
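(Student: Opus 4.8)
The plan is to treat the two components of $b(X)=\left(-G(u,\eta),-\eta u_x\right)$ separately and to reduce every term to a product that can be controlled by the tame (Moser--Kato--Ponce type) estimate $\|fg\|_{H^\sigma}\lesssim \|f\|_{L^\infty}\|g\|_{H^\sigma}+\|g\|_{L^\infty}\|f\|_{H^\sigma}$ for $\sigma\ge 0$, together with the algebra property of $H^{s-1}$, which is available because $s-1>1/2$. The only structural fact I need about the nonlocal term is that $\partial_xD^{-2}$ is smoothing of order $-1$, so that $\|G(u,\eta)\|_{H^s}\lesssim \left\|\tfrac12 u^2+u_x^2+\tfrac12\eta^2\right\|_{H^{s-1}}$; this transfers all the remaining work onto products in $H^{s-1}$.

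For the growth bound I would apply the tame estimate with the $L^\infty$-norm placed on one factor, giving $\|u^2\|_{H^{s-1}}\lesssim \|u\|_{L^\infty}\|u\|_{H^{s-1}}$, $\|u_x^2\|_{H^{s-1}}\lesssim \|u_x\|_{L^\infty}\|u_x\|_{H^{s-1}}$ and $\|\eta^2\|_{H^{s-1}}\lesssim \|\eta\|_{L^\infty}\|\eta\|_{H^{s-1}}$. Recognising $\|u_x\|_{H^{s-1}}\le\|u\|_{H^{s}}$ and bounding each $L^\infty$-factor by $\|X\|_{\V}$ and each Sobolev factor by $\|X\|_{\X^s}$ yields $\|G(u,\eta)\|_{H^s}\lesssim \|X\|_{\V}\|X\|_{\X^s}$. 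For the second component I would use the symmetric tame estimate $\|\eta u_x\|_{H^{s-1}}\lesssim \|\eta\|_{L^\infty}\|u_x\|_{H^{s-1}}+\|\eta\|_{H^{s-1}}\|u_x\|_{L^\infty}$ and again split each summand into a $\V$-factor times an $\X^s$-factor. Summing the two components gives the first inequality, and in particular shows $b(X)\in\X^s$, i.e.\ that $b$ is invariant in $\X$.

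For the Lipschitz estimate I would write each quadratic difference in bilinear form, $u^2-v^2=(u-v)(u+v)$, $u_x^2-v_x^2=(u_x-v_x)(u_x+v_x)$, $\eta^2-\rho^2=(\eta-\rho)(\eta+\rho)$, and likewise $\eta u_x-\rho v_x=\eta(u_x-v_x)+(\eta-\rho)v_x$. Here I do \emph{not} need the tame refinement, since the target allows full Sobolev norms on both factors; applying the $H^{s-1}$-algebra property to each product and the order $-1$ smoothing of $\partial_xD^{-2}$ on the $G$-difference, every term is bounded by $\left(\|X\|_{\X^s}+\|Y\|_{\X^s}\right)\|X-Y\|_{\X^s}$, which is the claim.

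The step requiring the most care is the growth bound, not the Lipschitz bound: one must invoke the \emph{tame} form of the product estimate so that a bare $L^\infty$ (hence $\V$) norm, and not a full $\X^s$ norm, sits on one factor --- this is precisely the structure demanded by the growth condition \ref{b in X} through \eqref{A11}, and it is what will later make the cut-off scheme in \eqref{cut-off problem} effective. The hypothesis $s>5/2$ enters here: beyond ensuring $s-1>1/2$ so that $H^{s-1}$ is an algebra, it is the threshold for the embedding $H^{s-1}\hookrightarrow W^{1,\infty}$, so that the factor $\eta$, which lives only in $H^{s-1}$, is genuinely controlled in the $\V$-norm appearing on the right-hand sides.
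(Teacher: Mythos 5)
Your proposal is correct and follows essentially the same route as the paper: the boundedness of $\partial_xD^{-2}:H^{s-1}\to H^{s}$ together with the tame (Moser) product estimate for the growth bound, and the bilinear factorizations $u^2-v^2=(u+v)(u-v)$, etc., combined with the algebra property of $H^{s-1}$ for the Lipschitz bound. The only cosmetic difference is your splitting $\eta u_x-\rho v_x=\eta(u_x-v_x)+(\eta-\rho)v_x$ versus the paper's $u_x(\eta-\rho)+\rho(u_x-v_x)$, which is immaterial.
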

	\begin{proof}
		Since $\partial_x(1-\partial^2_{xx})^{-1}$ is a bounded map from $H^s$ to $H^{s+1}$, the first estimate follows from 
		\begin{align*}
		\|b(X)\|^2_{\X^s}
		= \,&\|G(u,\eta)\|^2_{H^s}+\|u_x\eta\|^2_{H^{s-1}}\\
		\lesssim\, &\|u^2+u^2_x+\eta\|^2_{H^{s-1}}+\|u_x\|^2_{L^{\infty}}\|\eta\|^2_{H^{s-1}}
		+\|u_x\|^2_{H^{s-1}}\|\eta\|^2_{L^{\infty}}\\
		\lesssim\, &\|u\|^2_{W^{1,\infty}}\|u\|^2_{H^{s}}+\|\eta\|^2_{L^{\infty}}\|\eta\|^2_{H^{s-1}}
		+
		\|u\|^2_{W^{1,\infty}}\|\eta\|^2_{H^{s-1}}
		+\|u\|^2_{H^{s}}\|\eta\|^2_{L^{\infty}}\\
		\lesssim\, &\|(u,\eta)\|^2_{W^{1,\infty}\times L^{\infty}}\|(u,\eta)\|^2_{H^s\times H^{s-1}}.
		\end{align*} 
		Using the fact that $H^{s-1}$ is an algebra, we can infer that
		\begin{align*}
		&\hspace*{-0.5cm}\|b(X)-b(Y)\|^2_{\X^s}\\
		\lesssim\,&
		\|G(u,\eta)-G(v,\rho)\|^2_{H^s}+\|u_x\eta-v_x\rho\|^2_{H^{s-1}}\\
		\lesssim\,& \|u^2-v^2+u_x^2-v_x^2+\eta^2-\rho^2\|^2_{H^{s-1}}
		+\|u_x(\eta-\rho)+\rho(u_x-v_x)\|^2_{H^{s-1}}\\
		\lesssim\,&  \|u+v\|_{H^s}^2\|u-v\|^2_{H^s}
		+\|\eta+\rho\|_{H^{s-1}}^2\|\eta-\rho\|^2_{H^{s-1}}
		+\|u\|^2_{H^s}\|\eta-\rho\|^2_{H^{s-1}}
		+\|\rho\|^2_{H^{s-1}}\|u-v\|^2_{H^s}\\
		\lesssim\,&  \left(\|(u,\eta)\|^2_{H^{s}\times H^{s-1}}+\|(v,\rho)\|^2_{H^{s}\times H^{s-1}}\right)\|(u-v,\eta-\rho)\|^2_{H^{s}\times H^{s-1}},
		\end{align*}
		which gives the second estimate.
	\end{proof}
	
	\begin{Lemma}\label{Lemma for g h}
		Let Assumption \ref{Assum-xi} hold true and  $s>7/2$. If $X=(u,\eta)\in \X^s$, then  $g:\X^s\rightarrow \X^{s-2}$ and $h:\X^s\rightarrow \LL_2(\U;\X^{s-1})$ obey
		\begin{align*}
		\|g(X)\|_{\X^{s-2}}&\lesssim\, 1+\|X\|^2_{\X^{s}} 
		\end{align*}
		and
		\begin{align*}
		\|h(X)\|_{\LL_{2}(\U;\X^{s-1})}&\lesssim\,\|X\|_{\X^{s}}.
		\end{align*}
	\end{Lemma}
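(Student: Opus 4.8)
The plan is to verify the two bounds componentwise, in each case separating the genuinely nonlinear (quadratic) contributions from the linear Lie-derivative contributions: I would control the former by the algebra property of $H^\sigma$ for $\sigma>\tfrac12$, and the latter by Remark \ref{Remark xi with f} (the consequence of Assumption \ref{Assum-xi}). The two structural facts I rely on throughout are that $D^{-2}=(1-\partial_x^2)^{-1}$ is an isometry of $H^\sigma$ onto $H^{\sigma+2}$ (so it gains two derivatives), and that $H^\sigma(\mathbb T)$ is a Banach algebra exactly when $\sigma>\tfrac12$. Since $s>\tfrac72$ we have $s-2>\tfrac32$ and $s-3>\tfrac12$, so the target indices $s-2$ and $s-3$ both lie in the algebra range.

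For $h$, recall $h(X)e_k=h^k(X)=\left(-D^{-2}\mathcal{L}_{\xi_k}D^2u,-\mathcal{L}_{\xi_k}\eta\right)$, so that $\|h(X)\|_{\LL_2(\U;\X^{s-1})}^2=\sum_k\big(\|D^{-2}\mathcal{L}_{\xi_k}D^2u\|_{H^{s-1}}^2+\|\mathcal{L}_{\xi_k}\eta\|_{H^{s-2}}^2\big)$. Using $D^{-2}\colon H^{s-3}\to H^{s-1}$ together with the first estimate of Remark \ref{Remark xi with f} at target index $s-3>\tfrac12$ (this is precisely where $s>\tfrac72$ enters) I would bound $\sum_k\|D^{-2}\mathcal{L}_{\xi_k}D^2u\|_{H^{s-1}}\le\sum_k\|\mathcal{L}_{\xi_k}D^2u\|_{H^{s-3}}\lesssim\|D^2u\|_{H^{s-2}}=\|u\|_{H^s}$, and likewise $\sum_k\|\mathcal{L}_{\xi_k}\eta\|_{H^{s-2}}\lesssim\|\eta\|_{H^{s-1}}$. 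Since $\sum_k a_k^2\le\big(\sum_k a_k\big)^2$ for $a_k\ge0$, these $\ell^1$-bounds yield $\|h(X)\|_{\LL_2(\U;\X^{s-1})}\lesssim\|u\|_{H^s}+\|\eta\|_{H^{s-1}}\lesssim\|X\|_{\X^s}$.

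For $g=(g_1,g_2)$ with $g_1=-uu_x+\tfrac12 D^{-2}\sum_k\mathcal{L}^2_{\xi_k}D^2u\in H^{s-2}$ and $g_2=-u\eta_x+\tfrac12\sum_k\mathcal{L}^2_{\xi_k}\eta\in H^{s-3}$, three of the four pieces are routine. The quadratic terms are handled by the algebra property: $\|uu_x\|_{H^{s-2}}\lesssim\|u\|_{H^{s-2}}\|u\|_{H^{s-1}}\lesssim\|u\|_{H^s}^2$ and $\|u\eta_x\|_{H^{s-3}}\lesssim\|u\|_{H^{s-3}}\|\eta\|_{H^{s-2}}\lesssim\|X\|_{\X^s}^2$ (the latter forcing $s-3>\tfrac12$). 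The linear term of $g_2$ is immediate from the second estimate of Remark \ref{Remark xi with f} at index $s-3$, giving $\sum_k\|\mathcal{L}^2_{\xi_k}\eta\|_{H^{s-3}}\lesssim\|\eta\|_{H^{s-1}}\lesssim\|X\|_{\X^s}$.

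The main obstacle is the remaining linear term $D^{-2}\sum_k\mathcal{L}^2_{\xi_k}D^2u$ measured in $H^{s-2}$: a naive order count factors it through $\|\mathcal{L}^2_{\xi_k}D^2u\|_{H^{s-4}}$ and would invoke Remark \ref{Remark xi with f} at index $s-4$, i.e. demand $s>\tfrac92$, more than we assume. To get by with $s>\tfrac72$ I would exploit that $D^{-2}\mathcal{L}^2_{\xi_k}D^2$ is genuinely of order two by peeling off a commutator,
\[
D^{-2}\mathcal{L}^2_{\xi_k}D^2=\mathcal{L}^2_{\xi_k}+D^{-2}[\mathcal{L}^2_{\xi_k},D^2].
\]
The first summand is admissible: $\sum_k\|\mathcal{L}^2_{\xi_k}u\|_{H^{s-2}}\lesssim\|u\|_{H^s}$ by Remark \ref{Remark xi with f} at the allowed index $s-2>\tfrac12$. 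Since $D^2=1-\partial_x^2$ has constant coefficients, $[\mathcal{L}^2_{\xi_k},D^2]$ is a differential operator of order three with coefficients built from $\xi_k$ and its derivatives (schematically $\xi_k\,\partial_x\xi_k\,\partial_x^3$ plus lower order), so $D^{-2}[\mathcal{L}^2_{\xi_k},D^2]$ has order one; estimating its output in $H^{s-4}$ by a standard product estimate (available because $s-4>-\tfrac12$, the smooth factor $\xi_k\,\partial_x\xi_k$ taken in a high Sobolev norm) bounds the top term by $\lesssim\|\xi_k\,\partial_x\xi_k\|_{H^{\sigma_0}}\|\partial_x^3u\|_{H^{s-4}}\lesssim\|\xi_k\,\partial_x\xi_k\|_{H^{\sigma_0}}\|u\|_{H^{s-1}}$, and summing over $k$ via Assumption \ref{Assum-xi} gives $\lesssim\|u\|_{H^s}$. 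Collecting the four pieces and absorbing the linear contributions by Young's inequality $\|X\|_{\X^s}\le\tfrac12(1+\|X\|_{\X^s}^2)$ produces $\|g(X)\|_{\X^{s-2}}\lesssim1+\|X\|_{\X^s}^2$, completing the proof.
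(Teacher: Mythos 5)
Your proof is correct and follows the same overall strategy as the paper: treat each component of $g$ and $h$ separately, use the two-derivative gain of $D^{-2}$, the algebra property of $H^{\sigma}(\mathbb T)$ for $\sigma>\tfrac12$ on the quadratic terms, and Assumption \textbf{(B)} (via Remark \ref{Remark xi with f}) on the Lie-derivative terms, exactly as in the paper's two displayed chains of inequalities. The one place you genuinely diverge is the term $D^{-2}\sum_k\mathcal L^2_{\xi_k}D^2u$ in $H^{s-2}$: the paper simply asserts $\lesssim\|D^2u\|_{H^{s-2}}$, which amounts to estimating $\sum_k\|\mathcal L^2_{\xi_k}D^2u\|_{H^{s-4}}$ directly; since $\mathcal L^2_{\xi_k}$ has smooth coefficients and multiplication by a smooth function is bounded on $H^{\sigma}$ for every $\sigma\in\R$ (by duality when $\sigma\le\tfrac12$), this direct route does not actually need $s-4>\tfrac12$, so Remark \ref{Remark xi with f} is not the right tool there but the bound still holds. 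You instead peel off the commutator $D^{-2}\mathcal L_{\xi_k}^2D^2=\mathcal L^2_{\xi_k}+D^{-2}[\mathcal L^2_{\xi_k},D^2]$ so that the main piece falls under Remark \ref{Remark xi with f} at the admissible index $s-2$ and only a lower-order remainder needs a product estimate at negative regularity. Your version is slightly longer but makes explicit a step the paper leaves implicit, and your observation that a naive application of Remark \ref{Remark xi with f} at index $s-4$ would demand $s>\tfrac92$ is a fair diagnosis of why that step deserves a justification; both arguments are valid under the stated hypothesis $s>\tfrac72$.
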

	\begin{proof}
		Using  $H^{s-3}\hookrightarrow L^{\infty}$, we derive
		\begin{align*}
		\|g(X)\|^2_{\X^{s-2}}
		=\,&\left\|-uu_x+\frac{1}{2}D^{-2}\sum_{k=1}^{\infty}\mathcal{L}^2_{\xi_{k}}D^2u\right\|^2_{H^{s-2}}
		+\norm{-u\eta_x+\frac{1}{2}\displaystyle\sum_{k=1}^{\infty} \mathcal{L}^2_{\xi_{k}}\eta}^2_{H^{s-3}}\\
		\lesssim\,& \norm{u}^4_{H^s}+\norm{D^2u}^2_{H^{s-2}}+\norm{\eta}^2_{H^{s-1}}\norm{u}^2_{H^{s}}+\norm{\eta}^2_{H^{s-1}}\\
		\lesssim\,&
		\left(1+\norm{u}^2_{H^s}+\norm{\eta}^2_{H^{s-1}}\right)^2,
		\end{align*}
		which implies the first estimate. 
		Similarly, from the definition of $h$ in  \eqref{Define diffusion}, and the definition of $\LL_{\xi}$ in \eqref{define Lxi}, one has
		\begin{align*}
		\sum_{k=1}^{\infty}\|h(X)e_k\|^2_{\X^{s-1}}=\,&\sum_{k=1}^{\infty}
		\left(\norm{D^{-2} \mathcal{L}_{\xi_{k}}D^2u}^2_{H^{s-1}}
		+\norm{ \mathcal{L}_{\xi_{k}}\eta}^2_{H^{s-2}}\right)\\
		\lesssim\,&\sum_{k=1}^{\infty}
		\left(\norm{ \mathcal{L}_{\xi_{k}}D^2u}^2_{H^{s-3}}
		+\norm{ \mathcal{L}_{\xi_{k}}\eta}^2_{H^{s-2}}\right)\lesssim\, \|u\|^2_{H^s}+\|\eta\|^2_{H^{s-1}},
		\end{align*}
		which gives 
		the second estimate.
	\end{proof}

	\begin{Lemma}\label{Difference cancellation}
		Let $s>\frac{11}{2}$, $X=(u,\eta)\in \X^s$ and $Y=(v,\rho)\in \X^s$. Then we have
		\begin{align*}
		2\left(g(t,X)-g(t,Y),X-Y\right)_{\X^{s-2}}+\|h(t,X)-h(t,Y)\|^2_{\LL_2(\U;\X^{s-2})}\lesssim\,& \left(1+\|X\|^2_{\X^{s}}+\|Y\|^2_{\X^{s}}\right)\|X-Y\|^2_{\X^{s-2}}.
		\end{align*}
	\end{Lemma}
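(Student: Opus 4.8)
The plan is to split the drift $g$ and the diffusion $h$ into their nonlinear transport part and their linear (generalized Lie) part, estimate the former by standard Sobolev product and commutator estimates, and handle the latter through the cancellation estimate of Lemma \ref{Liecancellations}. Throughout I would write $X-Y=(w,\sigma):=(u-v,\eta-\rho)$ and use that the inner product on $\X^{s-2}=H^{s-2}\times H^{s-3}$ splits as the sum of the $H^{s-2}$ inner product of the first slots and the $H^{s-3}$ inner product of the second slots; likewise $\|h(X)-h(Y)\|^2_{\LL_2(\U;\X^{s-2})}$ splits into an $H^{s-2}$ piece from the $u$-slot and an $H^{s-3}$ piece from the $\eta$-slot. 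Since $g^{\mathrm{Lie}}(X):=\big(\tfrac12 D^{-2}\sum_k\mathcal{L}^2_{\xi_k}D^2u,\tfrac12\sum_k\mathcal{L}^2_{\xi_k}\eta\big)$ and $h$ are \emph{linear} in $X$, the corresponding differences are exactly these operators applied to $(w,\sigma)$, so the Lie/diffusion terms will be expressed purely through $(w,\sigma)$.

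For the nonlinear transport part $(-uu_x,-u\eta_x)$ of $g$, which has no counterpart in $h$, I would rewrite the differences as
\begin{align*}
uu_x-vv_x=\tfrac12\big((u+v)w\big)_x,\qquad u\eta_x-v\rho_x=u\sigma_x+w\rho_x,
\end{align*}
and estimate $\big(\tfrac12((u+v)w)_x,w\big)_{H^{s-2}}$ and $(u\sigma_x+w\rho_x,\sigma)_{H^{s-3}}$ by the standard energy/commutator trick: for $r>1/2$ one has $|(f\partial_x\psi,\psi)_{H^r}|\lesssim\|f\|_{H^{r+1}}\|\psi\|_{H^r}^2$, while the product $w\rho_x$ is controlled using that $H^{s-3}$ is an algebra ($s-3>1/2$). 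Since $u,v\in H^s$ and $\rho\in H^{s-1}$, this produces a bound of the form $(\|X\|_{\X^s}+\|Y\|_{\X^s})\|X-Y\|^2_{\X^{s-2}}$, which is absorbed into $(1+\|X\|^2_{\X^s}+\|Y\|^2_{\X^s})\|X-Y\|^2_{\X^{s-2}}$ by Young's inequality.

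For the Lie part I would use the factor $\tfrac12$ in front of $\mathcal{L}^2_{\xi_k}$ together with the factor $2$ from the inner product, so that the $\eta$-slot contributions reduce exactly to
$$\sum_{k}\Big[\big(\mathcal{L}^2_{\xi_k}\sigma,\sigma\big)_{H^{s-3}}+\norm{\mathcal{L}_{\xi_k}\sigma}^2_{H^{s-3}}\Big],$$
and, writing $\widetilde{\mathcal{L}}_k:=D^{-2}\mathcal{L}_{\xi_k}D^2$ so that $D^{-2}\mathcal{L}^2_{\xi_k}D^2=\widetilde{\mathcal{L}}_k^2$, the $u$-slot contributions reduce to
$$\sum_{k}\Big[\big(\widetilde{\mathcal{L}}_k^2 w,w\big)_{H^{s-2}}+\norm{\widetilde{\mathcal{L}}_k w}^2_{H^{s-2}}\Big].$$
This is where the singularity bites: taken separately $\mathcal{L}^2_{\xi_k}$ loses two derivatives and $\norm{\mathcal{L}_{\xi_k}\sigma}^2_{H^{s-3}}$ one, neither of which is controllable by $\norm{\sigma}^2_{H^{s-3}}$. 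The resolution is that $\mathcal{L}_{\xi_k}$ is skew-symmetric in $L^2$, whence $(\mathcal{L}^2_{\xi_k}\psi,\psi)_{L^2}+\norm{\mathcal{L}_{\xi_k}\psi}^2_{L^2}=0$; in the $H^r$ pairing the two leading terms still cancel, and the combination differs from zero only by commutator remainders whose top-order derivative can be integrated by parts and absorbed. This is exactly what Lemma \ref{Liecancellations} quantifies, so I would invoke it (and Assumption \ref{Assum-xi} to sum the convergent series in $k$) to bound the two displays by $C\norm{\sigma}^2_{H^{s-3}}$ and $C\norm{w}^2_{H^{s-2}}$, i.e.\ by $C\norm{X-Y}^2_{\X^{s-2}}$.

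Adding the transport and Lie contributions then yields the asserted estimate. The Lie part is the genuine obstacle: the naive derivative count fails and everything rests on the cancellation of Lemma \ref{Liecancellations}. It is also here that the regularity threshold enters — the commutator remainders produced by the cancellation at the level $\X^{s-2}$ are controlled only when $s-4>3/2$, so that the relevant lower-order quantities embed into $L^\infty$; this is the origin of the hypothesis $s>\tfrac{11}{2}$, whereas the transport estimates require only $s-3>1/2$ and are not the bottleneck.
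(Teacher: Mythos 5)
Your proposal is correct and follows essentially the same route as the paper: the transport differences are handled by Kato--Ponce commutator estimates plus integration by parts (your symmetric rewriting $uu_x-vv_x=\tfrac12((u+v)w)_x$ versus the paper's $vw_x+wu_x$ is an immaterial variant), and the linear Lie/diffusion differences are reduced, via $D^{s-2}D^{-2}=D^{s-4}$ acting on $f=D^2w$, to exactly the combination controlled by Lemma \ref{Liecancellations}. You also correctly identify that the threshold $s-4>\tfrac32$ needed to apply that cancellation at the level $\X^{s-2}$ is the origin of the hypothesis $s>\tfrac{11}{2}$.
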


	\begin{proof}
		Recalling \eqref{Define drift} and \eqref{Define diffusion}, we have
		\begin{align*}
		&\hspace*{-0.5cm}2\left(g(X)-g(Y),X-Y\right)_{\X^{s-2}}
		+\|h(t,X)-h(t,Y)\|^2_{\LL_2(\U;\X^{s-2})}\\
		=\,& 2(vv_x-uu_x,u-v)_{H^{s-2}}+2(v\rho_x-u\eta_x,\eta-\rho)_{H^{s-3}}\\
		&+\left(D^{-2}\sum_{k=1}^{\infty}\mathcal{L}^2_{\xi_{k}}D^2(u-v),u-v\right)_{H^{s-2}}
		+\sum_{k=1}^{\infty}
		\left(D^{-2}\mathcal{L}_{\xi_{k}}D^2(u-v),D^{-2} \mathcal{L}_{\xi_{k}}D^2(u-v)\right)_{H^{s-2}}\\
		&+\left(\sum_{k=1}^{\infty} \mathcal{L}^2_{\xi_{k}}(\eta-\rho),\eta-\rho\right)_{H^{s-3}}
		+\sum_{k=1}^{\infty}\left(\mathcal{L}_{\xi_{k}}(\eta-\rho), \mathcal{L}_{\xi_{k}}(\eta-\rho)\right)_{H^{s-3}}\\
		=:\,&\sum_{i=1}^{6}I_i.
		\end{align*}
		Because $H^{s-2}\hookrightarrow W^{1,\infty}$, we can use Lemma \ref{KP  commutator estimate} and integration by parts to arrive at
		\begin{align*}
		\left|I_1\right|
		\lesssim\,& \left|\left(D^{s-2}v(u-v)_x,D^{s-2}(u-v)\right)_{L^2}\right|
		+\left|\left(D^{s-2}(u-v)u_x,D^{s-2}(u-v)\right)_{L^2}\right|\\
		\lesssim\,& \norm{[D^{s-2},v](u-v)_x}_{L^2}\norm{u-v}_{H^{s-2}}
		+\norm{u_x}_{L^{\infty}}\norm{u-v}^2_{H^{s-2}}\\
		\lesssim\, &\left(\norm{v}_{H^s}+\norm{u}_{H^s}\right)\norm{u-v}^2_{H^{s-2}}.
		\end{align*}
		Similarly, we have
		\begin{align*}
		\left|I_2\right|
		\lesssim\,& \left|\left(D^{s-3}v(\eta-\rho)_x,D^{s-3}(\eta-\rho)\right)_{L^2}\right|
		+\left|\left(D^{s-3}(u-v)\eta_x,D^{s-3}(\eta-\rho)\right)_{L^2}\right|\\
		\lesssim\,& \norm{[D^{s-3},v](\eta-\rho)_x}_{L^2}\norm{\eta-\rho}_{H^{s-3}}
		+\norm{\eta_x}_{H^{s-3}}\norm{u-v}_{H^{s-3}}\norm{\eta-\rho}_{H^{s-3}}\\
		\lesssim\, &\norm{v}_{H^s}\norm{\eta-\rho}^2_{H^{s-3}}+\norm{\eta}^2_{H^{s-1}}\norm{u-v}^2_{H^{s-3}}+\norm{\eta-\rho}^2_{H^{s-3}}.
		\end{align*}
		Therefore,
		\begin{align*}
		|I_1|+|I_2|
		\lesssim\,&  \left(\norm{\eta}^2_{H^{s-1}}+\norm{v}_{H^s}+\norm{u}_{H^s}\right)
		\norm{u-v}^2_{H^{s-2}}
		+\left(1+\norm{v}_{H^s}\right)\norm{\eta-\rho}^2_{H^{s-3}}\\
		\lesssim\, & \left(1+\|X\|^2_{\X^{s}}+\|Y\|^2_{\X^{s}}\right)\|X-Y\|^2_{\X^{s-2}}.
		\end{align*}
		Observe that $D^{s-2}D^{-2}=D^{s-4}$.
		Since $s-4>3/2$,  we can invoke Lemma \ref{Liecancellations} 
		to obtain
		\begin{align*}
		&\hspace*{-0.5cm}I_3+I_4\\
		=\,&\left(D^{s-4}\sum_{k=1}^{\infty}\mathcal{L}^2_{\xi_{k}}D^2(u-v),
		D^{s-4}D^{2}(u-v)\right)_{L^2}
		+\sum_{k=1}^{\infty}
		\left(D^{s-4}\mathcal{L}_{\xi_{k}}D^2(u-v),D^{s-4}\mathcal{L}_{\xi_{k}}D^2(u-v)\right)_{L^2}\\
		\lesssim\,& \|D^2(u-v)\|^2_{H^{s-4}}\lesssim\, \|u-v\|^2_{H^{s-2}}.
		\end{align*}
		In the same way, we have
		\begin{align*}
		&\hspace*{-0.5cm}I_5+I_6\\
		=\,&\left(D^{s-3}\sum_{k=1}^{\infty}\mathcal{L}^2_{\xi_{k}}(\eta-\rho),
		D^{s-3}(\eta-\rho)\right)_{L^2}
		+\sum_{k=1}^{\infty}
		\left(D^{s-2}\mathcal{L}_{\xi_{k}}(\eta-\rho),
		D^{s-2}\mathcal{L}_{\xi_{k}}(\eta-\rho)\right)_{L^2}\\
		\lesssim\,& \|\eta-\rho\|^2_{H^{s-3}}.
		\end{align*}
		Collecting the above estimates, we obtain the desired result.
	\end{proof}
	
	\subsubsection{Proof of Theorem \ref{SCH2 results}}
	Now we will  prove that all the requirements in Assumption \ref{Assum-A} hold true. 
	We first fix  regular mappings  $g_\e$ and $h_\e $ using the mollification operators from   \eqref{Define Je} and \eqref{Define Te} in the  Appendix
	\ref{sec:appendix} by
	\begin{align}\label{family:gn}
	g_\e(X)&
	=\left(-J_\e [J_\e uJ_\e u_x]
	+\frac{1}{2} J^3_\e D^{-2}\sum_{k=1}^{\infty}\mathcal{L}^2_{\xi_{k}}D^2J_\e u, -J_\e [J_\e uJ_\e \eta_x]+\frac{1}{2} J^3_\e\sum_{k=1}^{\infty} \mathcal{L}^2_{\xi_{k}}J_\e \eta\right).
	\end{align}
	Let
	\begin{align}\label{family: h-n,k}
	h_{\e}^{k}(X)&=\left(-J_\e D^{-2}\mathcal{L}_{\xi_{k}}D^2J_\e u, -J_\e\mathcal{L}_{\xi_{k}}J_\e \eta \right).
	\end{align}
	Similar to \eqref{Define diffusion}, here we define $h_\e(X)\in \LL(\U;\X^s)$   such that
	\begin{align}\label{family:hn}
	h_{\e}(X)(e_k)=h_{\e}^{k}(X),\ \ k\in\N.
	\end{align}
	%
	We choose functions    $k(\cdot)\equiv1$, $ f(\cdot)=C(1+\cdot)$, $q(\cdot)=C(1+\cdot^5)$ for some $C>1$ large enough depending only on $b,g,h$.
	Finally we let  $T_\e =Q_\e= {\tilde J}_\e$,   where ${\tilde J}_\e $ is given in \eqref{Define Te}.
	%
	
	Let $s>11/2$.
	Obviously, $\X\hookrightarrow\Y\hookrightarrow\hookrightarrow\Z\hookrightarrow \V$.  Then
	Lemma \ref{Lemma for b} shows $b:\X^s\rightarrow\X^s$, and Lemma \ref{Lemma for g h} implies $g: \X^s\rightarrow\X^{s-2}$ and $h: \X^s\rightarrow \LL_2(\U;\X^{s-1})$. Hence the stochastic integral in \eqref{Abstarct SCH2} is a well defined $\X^{s-1}$-valued local martingale. It is  straightforward to verify that all of them are continuous in $X\in\X^s$.

	\textbf{Checking \ref{b in X}:} Lemma \ref{Lemma for b} implies \ref{b in X}.
	
	\textbf{Checking \ref{gn hn condition}:} By the construction of $g_\e(\cdot)$ and $h_\e(\cdot)$,  \eqref{Je r>s}, Lemma \ref{Lemma for g h} and Assumption \ref{Assum-xi}, it is easy to check that  \ref{gn hn condition} is satisfied.

	\textbf{Checking \ref{growth gn hn}:} 
	We first verify \eqref{A31}. By \eqref{family: h-n,k} and \eqref{Je Te self-adjoint}, we have
	\begin{align*}
	\left(h_{\e}^k(X),X\right)_{\X}
	=\,&\left(-J_\e D^{-2}\mathcal{L}_{\xi_{k}}D^2J_\e u,u\right)_{H^s}
	+\left(-J_\e \mathcal{L}_{\xi_{k}}J_\e \eta,\eta\right)_{H^{s-1}}\\
	=\,&-\left(D^{-2}\mathcal{L}_{\xi_{k}}D^2J_\e u,J_\e u\right)_{H^s}
	-\left(\mathcal{L}_{\xi_{k}}J_\e \eta,J_\e\eta\right)_{H^{s-1}}\\
	=\,&-\left(D^{s-2}\mathcal{L}_{\xi_{k}}D^2J_\e u,D^{s-2}D^2J_\e u\right)_{L^2}
	-\left(D^{s-1}\mathcal{L}_{\xi_{k}}J_\e \eta,D^{s-1}J_\e\eta\right)_{L^{2}}.
	\end{align*}
	Let $v=D^{2}J_\e u$.
	From the definition of the operator $\LL_{\xi}$ in \eqref{define Lxi}, we have
	\begin{align*}
	\left(D^{s-2}\mathcal{L}_{\xi_{k}}v,D^{s-2}v\right)_{L^2}
	=\,&\left(D^{s-2}\left(v\partial_x\xi_k\right),D^{s-2}v\right)_{L^2}
	+\left(D^{s-2}\left(\partial_xv\xi_k\right),D^{s-2}v\right)_{L^2}\\
	=\,&\left([D^{s-2},v]\partial_x\xi_k,D^{s-2}v\right)_{L^2}
	+\left(vD^{s-2}\partial_x\xi_k,D^{s-2}v\right)_{L^2}\\
	&+\left([D^{s-2},\xi_k]\partial_xv,D^{s-2}v\right)_{L^2}
	+\left(\xi_kD^{s-2}\partial_xv,D^{s-2}v\right)_{L^2}.
	\end{align*}
	By Lemma \ref{KP  commutator estimate}, $H^{s-2}\hookrightarrow W^{1,\infty}$ and integration by parts, we arrive at
	\begin{align*}
	\left([D^{s-2},v]\partial_x\xi_k,D^{s-2}v\right)_{L^2}
	+\left(vD^{s-2}\partial_x\xi_k,D^{s-2}v\right)_{L^2}
	\lesssim\, &\|v\|^2_{H^{s-2}}\|\xi_k\|_{H^{s-1}}
	\end{align*}
	and
	\begin{align*}
	\left([D^{s-2},\xi_k]\partial_xv,D^{s-2}v\right)_{L^2}
	+\left(\xi_kD^{s-2}\partial_xv,D^{s-2}v\right)_{L^2}
	\lesssim\, &\|v\|^2_{H^{s-2}}\|\xi_k\|_{H^{s-2}}.
	\end{align*}
	Combining the above estimates and using \eqref{Je Te Hs}, we have that
	\begin{equation*}
	\left(D^{s-2}\mathcal{L}_{\xi_{k}}D^2J_\e u,D^{s-2}D^2J_\e u\right)_{L^2}
	\lesssim\,\|v\|^2_{H^{s-2}}\|\xi_k\|_{H^{s}}\leq\|u\|^2_{H^{s}}\|\xi_k\|_{H^{s}}.
	\end{equation*}
	Similarly,
	\begin{equation*}
	\left(D^{s-1}\mathcal{L}_{\xi_{k}}J_\e \eta,D^{s-1}J_\e\eta\right)_{L^{2}}
	\lesssim\,\|J_\e\eta\|^2_{H^{s-1}}\|\xi_k\|_{H^{s}}\leq\|\eta\|^2_{H^{s-1}}\|\xi_k\|_{H^{s}}.
	\end{equation*}
	Therefore, by using \eqref{family: h-n,k}, \eqref{family:hn}, Assumption \ref{Assum-xi} and \eqref{Je Te Hs}, we conclude that
	\begin{align*}
	\sum_{k=1}^{\infty}\left|\left(h_\e(X)\xi_k,X\right)_{\X}\right|^2
	=\sum_{k=1}^{\infty}\left|\left(h_{\e}^k(X),X\right)_{\X}\right|^2
	\lesssim\, \sum_{k=1}^{\infty}\|\xi_k\|^2_{H^{s}}\left(\|u\|^2_{H^s}+\|\eta\|^2_{H^{s-1}}\right)^2\leq C\|X\|^4_{\X},
	\end{align*} 
	which yields \eqref{A31}. 
	
	Now we prove \eqref{A32}. For all $X=(u,\eta)\in \X^s$, we have
	\begin{align*}
	&\hspace*{-0.5cm}2\left(g_\e(X),X\right)_{\X^{s}}+\|h_\e(X)\|^2_{\LL_2(\U;\X^{s})}\\
	=\,& -2(J_\e [J_\e uJ_\e u_x],u)_{H^{s}}
	-2\left(J_\e[J_\e uJ_\e \eta_x], \eta\right)_{H^{s-1}}\\
	&+\left(D^sJ^3_\e D^{-2}\sum_{k=1}^{\infty}\mathcal{L}^2_{\xi_{k}}D^2J_\e u, D^s u\right)_{L^2}
	+\sum_{k=1}^{\infty}
	\left(D^sJ_\e D^{-2}\mathcal{L}_{\xi_{k}}D^2J_\e u, D^sJ_\e D^{-2}\mathcal{L}_{\xi_{k}}D^2J_\e u\right)_{L^2}\\
	&+\left(D^{s-1}J^3_\e \sum_{k=1}^{\infty}\mathcal{L}^2_{\xi_{k}}J_\e \eta, D^{s-1} \eta\right)_{L^2}
	+\sum_{k=1}^{\infty}
	\left(D^{s-1}J_\e\mathcal{L}_{\xi_{k}}J_\e \eta, D^{s-1}J_\e\mathcal{L}_{\xi_{k}}J_\e \eta\right)_{L^2} \\
	=:\,&\sum_{i=1}^{6}E_i.
	\end{align*}

	It follows from \eqref{Je Te Ds}, \eqref{Je Te Hs}, Lemma \ref{KP  commutator estimate} and integration by parts that
	\begin{equation*}
	|E_1|=2\left|\left([D^s,J_\e u]J_\e u_x, D^sJ_\e u\right)_{L^2}
	+\left(J_\e uD^sJ_\e u_x, D^sJ_\e u\right)_{L^2}\right|
	\lesssim\,\|u_x\|_{L^{\infty}}\|u\|^2_{H^s}
	\end{equation*}
	and
	\begin{align*}
	|E_2|
	=\,&2\left|\left([D^{s-1},J_\e u]J_\e \eta_x, D^{s-1}J_\e \eta\right)_{L^2}+\left(J_\e uD^{s-1}J_\e \eta_x, D^{s-1}J_\e \eta\right)_{L^2}\right|\\
	\lesssim\,&(\|u_x\|_{L^\infty}+\|\eta_x\|_{L^\infty})
	\left(\|u\|^2_{H^s}+\|\eta\|^2_{H^{s-1}}\right).
	\end{align*}
	By $\eqref{Je Te Ds}$, $\eqref{Je Te self-adjoint}$ and the fact that $D^{s-2}=D^sD^{-2}$, we obtain
	\begin{align*}
	&\hspace*{-0.5cm}E_3+E_4\\
	=\,&\left(D^{s-2}J_\e\sum_{k=1}^{\infty}\mathcal{L}^2_{\xi_{k}}D^2J_\e u,  D^{s-2}J_\e D^{2}J_\e u\right)_{L^2}
	+\sum_{k=1}^{\infty}
	\left(D^{s-2}J_\e\mathcal{L}_{\xi_{k}}D^2J_\e u, D^{s-2}J_\e\mathcal{L}_{\xi_{k}}  D^2J_\e u\right)_{L^2}.
	\end{align*}
	Since $\mathcal{P}=D^{s-2}J_\e \in \mbox{OPS}^{s-2}_{1,0}$ (cf. Lemma \ref{lemma;pseudo}), we apply Lemma \ref{Liecancellations} to arrive at
	\begin{align*}
	E_3+E_4
	\lesssim\, \norm{D^{2}J_{\e} u}^2_{H^{s-2}} \leq C \norm{u}^2_{H^{s}},
	\end{align*}
	where we have used \eqref{Je Te Hs} in the last inequality.
	Similarly,
	\begin{align*}
	&\hspace*{-0.5cm}E_5+E_6\\
	=\,& \left(D^{s-1}J_\e\sum_{k=1}^{\infty}\mathcal{L}^2_{\xi_{k}}J_\e  \eta,  D^{s-1}J_\e J_\e\eta\right)_{L^2}
	+\sum_{k=1}^{\infty}
	\left(D^{s-1}J_\e\mathcal{L}_{\xi_{k}}J_\e \eta, D^{s-1}J_\e\mathcal{L}_{\xi_{k}}J_\e \eta\right)_{L^2} \\
	\leq &C \|J_\e\eta\|^2_{H^{s-1}}\leq C \|\eta\|^2_{H^{s-1}}.
	\end{align*}
	Combining the above estimates, we arrive at
	\begin{align*}
	2\left(g_\e(X),X\right)_{\X^s}+\|h_\e(X)\|^2_{\LL_2(\U;\X^s)}
	\lesssim\, (1+\|u_x\|_{L^\infty}+\|\eta_x\|_{L^\infty})\left(\|u\|^2_{H^s}+\|\eta\|^2_{H^{s-1}}\right)\leq f\big(\|X\|_{\V}\big)\|X\|^2_{\X^s},
	\end{align*}
	which implies \eqref{A32} with $k(t)\equiv 1$.
	
	\textbf{Checking \ref{uniqueness requirement}:} It is clear that $\X=\X^s$ is dense in $\Z=\X^{s-2}$. Since $s-2>\frac{5}{2}$, inequality \eqref{B11} follows directly from Lemma \ref{Lemma for b}. Applying Lemma \ref{Difference cancellation} yields \eqref{B12}.
	
	\textbf{Checking \ref{continuity requirement}:} Recall that  $\tilde{J}_\e =(1-\e^{2}\Delta)^{-1}$. Due to \eqref{Je Te Hs} and $T_\e =Q_\e =\tilde{J}_\e $, \ref{continuity requirement} is a direct consequence of \ref{blow-up requirement}, which will be checked below.
	
	\textbf{Checking \ref{blow-up requirement}:}
	It is easy to prove \eqref{B21} and we omit the details here. Then we notice that
	\begin{align*}
	&\hspace*{-0.5cm}2\left(T_\e  g(X),T_\e  X\right)_{\X^{s}} +\|T_\e  h(X)\|^2_{\LL_2(\U;\X^{s})} \\
	= &-2(T_\e  [uu_x],T_\e  u)_{H^{s}}
	-2\left(T_\e [u\eta_x], T_\e  \eta\right)_{H^{s-1}}\\
	&+\left(D^sT_\e  D^{-2}\sum_{k=1}^{\infty}\mathcal{L}^2_{\xi_{k}}D^2 u, D^s T_\e  u\right)_{L^2}
	+\sum_{k=1}^{\infty}
	\left(D^sT_\e  D^{-2}\mathcal{L}_{\xi_{k}}D^2 u, D^sT_\e  D^{-2}\mathcal{L}_{\xi_{k}}D^2 u\right)_{L^2}\\
	&+\left(D^{s-1}T_\e  \sum_{k=1}^{\infty}\mathcal{L}^2_{\xi_{k}} \eta, D^{s-1} T_\e \eta\right)_{L^2}
	+\sum_{k=1}^{\infty}
	\left(D^{s-1}T_\e \mathcal{L}_{\xi_{k}} \eta, D^{s-1}T_\e \mathcal{L}_{\xi_{k}}\eta\right)_{L^2}=\sum_{i=1}^{6}R_i.
	\end{align*}
	For the first term we have that
	\begin{align*}
	|R_{1}| =\,& 2\left|\left(D^sT_\e  
	\left[uu_x\right],D^sT_\e  u\right)_{L^2}\right|\notag\\
	\leq\,&2\left|\left(\left[D^s,
	u\right]u_x,D^sT^2_\e u\right)_{L^2}+
	\left([T_\e ,u]D^su_x, D^sT_\e  u\right)_{L^2}
	+\left(uD^sT_\e  u_x, D^sT_\e  u\right)_{L^2}\right|\notag\\
	\leq\,&  C\|u_x\|_{L^{\infty}}\|u\|_{H^s}\|T_\e  u\|_{H^s}
	+C\|u_x\|_{L^{\infty}}\|T_\e  u\|^2_{H^s},
	\end{align*}
	where we have used Lemmas \ref{Te commutator} and \ref{KP  commutator estimate}, integration by parts, embedding $H^{s-1}\hookrightarrow W^{1,\infty}$, \eqref{Je Te self-adjoint} and \eqref{Je Te Hs}.
	Similarly, we can show that
	\begin{align*}
	|R_{2}| =\,& 2\left|\left(D^{s-1}T_\e  
	\left[u\eta_x\right],D^{s-1}T_\e  \eta\right)_{L^2}\right|\notag\\
	=\,&2\left|\left(\left[D^{s-1},u\right]\eta_x,D^{s-1}T^2_\e \eta\right)_{L^2}
	+\left([T_\e ,u]D^{s-1}\eta_x, D^{s-1}T_\e  \eta\right)_{L^2}
	+\left(uD^{s-1}T_\e  \eta_x, D^{s-1}T_\e  \eta\right)_{L^2}\notag\right|\\
	\leq\,&  C\left(\|u_x\|_{L^{\infty}}\|\eta\|_{H^{s-1}}\|T_\e  \eta\|_{H^{s-1}}
	+\|\eta_x\|_{L^{\infty}}\|u\|_{H^s}\|T_\e  \eta\|_{H^{s-1}}\right)
	+C\|u_x\|_{L^{\infty}}\|T_\e  \eta\|^2_{H^s}\\
	\lesssim\,& \|u_x\|_{L^{\infty}}\|\eta\|_{H^{s-1}}\|T_\e  \eta\|_{H^{s-1}}
	+\|\eta_x\|_{L^{\infty}}\|u\|_{H^s}\|T_\e  \eta\|_{H^{s-1}}.
	\end{align*}
	Using Lemma \ref{Liecancellations} yields
	\begin{align*}
	R_3+R_4=\,&\left(D^{s-2}T_{\e}\sum_{k=1}^{\infty}\mathcal{L}^2_{\xi_{k}}D^2 u, D^{s-2}T_{\e}D^2 u\right)_{L^2}
	+\sum_{k=1}^{\infty}
	\left(-D^{s-2}T_{\e}\mathcal{L}_{\xi_{k}}D^2 u, -D^{s-2}T_{\e}\mathcal{L}_{\xi_{k}}D^2 u\right)_{L^2} \\
	\lesssim\,&  \|D^2u\|^2_{H^{s-2}}\leq \|u\|^{2}_{H^s}. 
	\end{align*}
	and analogously 
	\begin{align*}
	R_5+R_6=\,&\left(D^{s-1}T_\e  \sum_{k=1}^{\infty}\mathcal{L}^2_{\xi_{k}} \eta, D^{s-1} T_\e \eta\right)_{L^2}
	+\sum_{k=1}^{\infty}
	\left(D^{s-1}T_\e \mathcal{L}_{\xi_{k}} \eta, D^{s-1}T_\e \mathcal{L}_{\xi_{k}}\eta\right)_{L^2} \lesssim\,  \|\eta\|^2_{H^{s-1}}.
	\end{align*}
	Gathering together the above estimates and noticing \eqref{Je Te Hs}, we get 
	\begin{align*}
	&\hspace*{-0.5cm}2\left(T_\e  g(X),T_\e  X\right)_{\X^{s}} +\|T_\e  h(X)\|^2_{\LL_2(\U;\X^{s})} \notag\\
	\lesssim\,& (1+\|u_x\|_{L^\infty}+\|\eta_x\|_{L^\infty})\left(\|u\|^2_{H^s}+\|\eta\|^2_{H^{s-1}}\right)
	\leq f\big(\|X\|_{\V}\big)\|X\|^2_{\X^s},
	\end{align*}
	which gives \eqref{blow-up requirement 1}. We are just left to show \eqref{blow-up requirement 2} to conclude the proof of Theorem \ref{SCH2 results}. To this end, we recall \eqref{Define drift} and consider
	\begin{align*}
	-\left(T_\e  h_k(X),T_\e  X\right)_{\X^s}=\,&
	\left(T_{\e}D^{s-2}\mathcal{L}_{\xi_{k}}D^2 u,T_{\e}D^{s}u\right)_{L^2}
	+\left(T_{\e}D^{s-1}\mathcal{L}_{\xi_{k}}\eta,T_{\e}D^{s-1}\eta\right)_{L^2}\\ 
	=\,& \left(\mathcal{P}_{1}\mathcal{L}_{\xi_{k}}D^2 u,\mathcal{P}_{1}D^2 u\right)_{L^2}  +\left(\mathcal{P}_{2}\mathcal{L}_{\xi_{k}}\eta,\mathcal{P}_{2}\eta\right)_{L^2} \\
	=\,& \left(\mathcal{T}_{1}D^2 u,\mathcal{P}_{1}D^2 u\right)_{L^2} 
	+\left(\mathcal{L}_{\xi_{k}}\mathcal{P}_{1}D^2 u,\mathcal{P}_{1}D^2 u\right)_{L^2}\\
	& \quad  +\left(\mathcal{T}_{2}\eta,\mathcal{P}_{2}\eta\right)_{L^2}
	+\left(\mathcal{L}_{\xi_{k}}\mathcal{P}_{2}\eta,\mathcal{P}_{2}\eta\right)_{L^2}\\
	=:\,&\displaystyle\sum_{i=1}^4 J_i,
	\end{align*}
	where $\mathcal{P}_{1}:=T_{\e}D^{s-2}\in \mbox{OPS}^{s-2}_{1,0}$, $\mathcal{P}_{2}:=T_{\e}D^{s-1}\in \mbox{OPS}^{s-1}_{1,0}$ (cf. Lemma \ref{lemma;pseudo}),  and 
	$\mathcal{T}_{1}=\left[\mathcal{P}_{1}, \mathcal{L}_{\xi_{k}} \right]$, $\mathcal{T}_{2}=\left[\mathcal{P}_{2}, \mathcal{L}_{\xi_{k}} \right]$.
	Using integration by parts, \eqref{define Lxi} and \eqref{Je Te Ds}, we have that
	\begin{align*}
	|J_{2}|+|J_{4}|&
	\lesssim\, \norm{\partial_{x}\xi_{k}}_{L^\infty}\left(\norm{\mathcal{P}_{1}D^2 u}^2_{L^2}+\norm{\mathcal{P}_{2} \eta}^2_{L^2}\right)
	\lesssim\,\norm{\partial_{x}\xi_{k}}_{L^\infty}\|T_\e  X\|^2_{\X^s}.
	\end{align*}
	Using \eqref{Je Te self-adjoint} and \eqref{Je Te Ds}, we have
	\begin{align*}
	J_3=\,&\left(\mathcal{T}_{2} \eta,\mathcal{P}_{2} \eta\right)_{L^2}\\
	=\,&\left(D^{s-1}\mathcal{L}_{\xi_{k}}\eta,D^{s-1}T^2_\e \eta\right)_{L^2} 
	-\left(\mathcal{L}_{\xi_{k}}D^{s-1}T_\e \eta,D^{s-1}T_\e  \eta\right)_{L^2} \\
	=\,&\left(D^{s-1}\xi_k\partial_x\eta,D^{s-1}T^2_\e \eta\right)_{L^2} +
	\left(D^{s-1}\eta\partial_x\xi_k,D^{s-1}T^2_\e \eta\right)_{L^2}-
	\left(\mathcal{L}_{\xi_{k}}D^{s-1}T_\e \eta,D^{s-1} T_\e \eta\right)_{L^2}\\
	=\,&\left(\left[D^{s-1},\xi_k\right]\partial_x\eta,D^{s-1}T^2_\e \eta\right)_{L^2} +
	\left(T_\e \xi_kD^{s-1}\partial_x\eta,D^{s-1}T_\e  \eta\right)_{L^2}\\
	&+\left(D^{s-1}\eta\partial_x\xi_k,D^{s-1}T^2_\e \eta\right)_{L^2}-
	\left(\mathcal{L}_{\xi_{k}}D^{s-1}T_\e \eta,D^{s-1} T_\e \eta\right)_{L^2}\\
	=:\,& \sum_{i=1}^4 K_i.
	\end{align*}
	On account of $H^{s-1}\hookrightarrow W^{1,\infty}$ and integration by parts, 
	it holds that
	\begin{equation*}
	|K_3|\lesssim\, \|\eta\partial_x\xi_k\|_{H^{s-1}}\|T_\e \eta\|_{H^{s-1}}\leq \|\xi_k\|_{H^s}\|\eta\|_{H^{s-1}}\|T_\e \eta\|_{H^{s-1}},
	\end{equation*}
	and
	\begin{equation*}
	|K_4|\lesssim\, \norm{\partial_{x}\xi_{k}}_{L^\infty}\norm{T_\e \eta}^2_{H^{s-1}}
	\lesssim\, \norm{\partial_{x}\xi_{k}}_{L^\infty}\|\eta\|_{H^{s-1}}\|T_\e \eta\|_{H^{s-1}}.
	\end{equation*}
	Then we apply Lemma \ref{KP  commutator estimate} to $K_1$ to find
	\begin{equation*}
	|K_1|\lesssim\, \norm{\xi_{k}}_{H^{s}}\|\eta\|_{H^{s-1}}\|T_\e \eta\|_{H^{s-1}}.
	\end{equation*}
	For $K_2$, we use Lemma \ref{Te commutator} and integration by parts to derive
	\begin{align*}
	|K_2|
	\lesssim\,& 
	\left|\left(\left[T_\e ,\xi_k\right]\partial_xD^{s-1}\eta,D^{s-1}T_\e  \eta\right)\right|
	+\left|\left(\xi_k\partial_xD^{s-1}T_\e \eta,D^{s-1}T_\e  \eta\right)\right|
	\lesssim\, \norm{\partial_{x}\xi_{k}}_{L^\infty}\|\eta\|_{H^{s-1}}\|T_\e \eta\|_{H^{s-1}}.
	\end{align*}
	Therefore, 
	$$|J_3|=\left|\left(\mathcal{T}_{2} \eta,\mathcal{P}_{2} \eta\right)_{L^2}\right|\lesssim\, \|\xi_k\|_{H^s}\|\eta\|_{H^{s-1}}\|T_\e \eta\|_{H^{s-1}}.$$
	
	The form  $J_4=\left(\mathcal{T}_{1}D^2 u,\mathcal{P}_{1}D^2 u\right)_{L^2}$ can be handled in the same way  
	using $H^{s-2}\hookrightarrow W^{1,\infty}$. Hence we have
	$$|J_3|=\left|\left(\mathcal{T}_{1} f,\mathcal{P}_{1} f\right)_{L^2}\right|\lesssim\, \|\xi_k\|_{H^s}\|f\|_{H^{s-2}}\|T_\e  f\|_{H^{s-2}}
	\lesssim\, \|\xi_k\|_{H^s}\|u\|_{H^{s}}\|T_\e  u\|_{H^{s}}.$$
	Now we summarize the above estimates, and use \eqref{Define diffusion} and
	Assumption \ref{Assum-xi}  to arrive at
	\begin{align}\label{final sch2}
	\sum_{k=1}^{\infty}\left|\left(T_\e  h(X)e_k,T_\e  X\right)_{\X^s}\right|^2\lesssim\, \sum_{k=1}^{\infty}\norm{\xi_{k}}^2_{H^s}\|X\|^2_{\X^s}\|T_\e  X\|^2_{\X^s}
	\leq C\|X\|^2_{\X^s}\|T_\e  X\|^2_{\X^s}.
	\end{align}
	Hence we obtain inequality \eqref{blow-up requirement 2} 
	and complete the proof.

	\subsection{Stochastic CCF model: Proof of Theorem \ref{transport:nonlocal results}}\label{Sec:STNL} 
	In this section we will apply Theorem \ref{Abstract irregular SDE: T} to  \eqref{Transport:nonloca:eq:ito} with $x\in\T$ to obtain Theorem \ref{transport:nonlocal results}. To that purpose, we set $X=\theta$ and 
	\begin{equation}\label{def:b:g:h:nonlocal}
	\begin{array}{rcccl}
	b(t,X)&=&b(X)&=&0, \\[1.2ex]
	g(t,X)&=&g(X)&=& \displaystyle -(\mathcal{H}\theta)\partial_{x}\theta+\frac{1}{2}\sum_{k=1}^{\infty} \mathcal{L}^2_{\xi_{k}}\theta,\\[2.2ex]
	h^k(t,X)&=&h^k(X)&=&-\mathcal{L}_{\xi_{k}}\theta,\ \ k\in\N.
	\end{array} 
	\end{equation}
	As in \eqref{Define diffusion}, we  define $h(X)\in \LL(\U;H^s)$ such that
	\begin{align}\label{Define diff:nonlocal}
	h(X)(e_k)=h^{k}(X),\ \ k\in\N.
	\end{align}
	With the above notations,  we reformulate \eqref{Transport:nonloca:eq:ito} in the abstract form, i.e.,
	\begin{equation}\label{Abstarct nlp}
	\left\{\begin{aligned}
	&\dd X=\left(b(X)+g(X)\right) \dd t+h(X)\dd\W,\\
	&X(0)=\theta_0.
	\end{aligned}\right.
	\end{equation}
	
	To prove Theorem \ref{transport:nonlocal results}, we would like to invoke Theorem \ref{Abstract irregular SDE: T} to this setting. To do that, we just need to check the Assumption \ref{Assum-A}. Now we let $r\in({3}/{2},s-2)$, and then let
	\begin{equation}\label{U Xs V nlt}
	\X^s=H^s \ {\rm and} \ \V=H^r.
	\end{equation}
	
	
	\subsubsection{Estimates on nonlinear terms}
	Analogously to Section \ref{subsubest} we will need the following auxiliary lemmas.
	\begin{Lemma}\label{Lemma for g h nlt}
		Let Assumption \ref{Assum-xi} hold true and  $s>5/2$. If $X=\theta \in \X^s$, 
		then  $g:\X^s\rightarrow \X^{s-2}$ and $h:\X^s\rightarrow \LL_2(\U;\X^{s-1})$ such that
		\begin{align*}
		\|g(X)\|_{\X^{s-2}}&\lesssim\, 1+\|X\|^2_{\X^{s}},
		\end{align*}
		and 
		\begin{align*}
		\|h(X)\|_{\LL_{2}(\U;\X^{s-1})}&\lesssim\, \|X\|_{\X^{s}}.
		\end{align*}
	\end{Lemma}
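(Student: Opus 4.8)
The plan is to mirror the proof of Lemma \ref{Lemma for g h}, exploiting two ingredients: the standard product (algebra) estimates in Sobolev spaces on $\T$, and the mapping bounds of Remark \ref{Remark xi with f}, which already encode that $\mathcal{L}_{\xi_k}$ and $\mathcal{L}^2_{\xi_k}$ cost one and two derivatives respectively and that the sum over $k$ is controlled by Assumption \ref{Assum-xi}. Since $s>5/2$ here (in particular $s-2>\tfrac12$), $H^{s-2}$ is a Banach algebra, which is the only threshold that really matters for this lemma.

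First I would treat the term $g(X)=-(\mathcal{H}\theta)\partial_{x}\theta+\tfrac12\sum_{k}\mathcal{L}^2_{\xi_k}\theta$ by splitting it into the nonlocal transport part and the second-order part. For the former, I would use that the periodic Hilbert transform $\mathcal{H}$ is a Fourier multiplier of order zero, hence bounded on every $H^{\sigma}$, so that $\mathcal{H}\theta\in H^{s}$; combined with $\partial_{x}\theta\in H^{s-1}$ and the algebra property of $H^{s-2}$ one gets $\|(\mathcal{H}\theta)\partial_{x}\theta\|_{H^{s-2}}\lesssim\|\mathcal{H}\theta\|_{H^{s-2}}\|\partial_{x}\theta\|_{H^{s-2}}\lesssim\|\theta\|_{H^{s}}^2$. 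For the second-order part, Remark \ref{Remark xi with f} applied with $s$ replaced by $s-2$ gives $\sum_{k}\|\mathcal{L}^2_{\xi_k}\theta\|_{H^{s-2}}\lesssim\|\theta\|_{H^{s}}$. Adding the two contributions and bounding $\|\theta\|_{H^{s}}\le 1+\|\theta\|_{H^{s}}^2$ yields $\|g(X)\|_{\X^{s-2}}\lesssim 1+\|X\|_{\X^{s}}^2$.

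For the diffusion coefficient I would expand $\|h(X)\|_{\LL_2(\U;\X^{s-1})}^2=\sum_{k}\|\mathcal{L}_{\xi_k}\theta\|_{H^{s-1}}^2$ and dominate the $\ell^2$-sum by the $\ell^1$-sum, so that $\|h(X)\|_{\LL_2(\U;\X^{s-1})}\le\sum_{k}\|\mathcal{L}_{\xi_k}\theta\|_{H^{s-1}}$. Here either Remark \ref{Remark xi with f} (with $s$ replaced by $s-1$) applies directly, or one writes $\mathcal{L}_{\xi_k}\theta=\partial_{x}(\xi_k\theta)$, uses the algebra estimate $\|\xi_k\theta\|_{H^{s}}\lesssim\|\xi_k\|_{H^{s}}\|\theta\|_{H^{s}}$, and sums using $\sum_{k}\|\xi_k\|_{H^{s}}<\infty$ from Assumption \ref{Assum-xi}; either way $\|h(X)\|_{\LL_2(\U;\X^{s-1})}\lesssim\|\theta\|_{H^{s}}=\|X\|_{\X^{s}}$.

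There is no genuine obstacle: the computation is routine once the right Sobolev product rules are in place. The only points to be careful about are (i) confirming that the nonlocal factor $\mathcal{H}$ does not cost regularity, which is immediate from its being a bounded order-zero multiplier, and (ii) matching the two-derivative loss of $\mathcal{L}^2_{\xi_k}$ with the target space $H^{s-2}$, which is exactly why the algebra threshold $s-2>\tfrac12$ (equivalently $s>5/2$) suffices and why no stronger lower bound on $s$ is needed for this particular lemma.
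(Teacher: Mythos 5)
Your proof is correct and follows essentially the same route as the paper, which simply invokes the Sobolev product/embedding properties of $H^{s-2}$, the boundedness of the periodic Hilbert transform on $H^\sigma$, and Remark \ref{Remark xi with f} (applied at the shifted regularities $s-2$ and $s-1$), omitting the details you have written out. The only cosmetic difference is that you justify the product estimate via the algebra property of $H^{s-2}$ for $s-2>\tfrac12$ rather than the embedding $H^{s-2}\hookrightarrow W^{1,\infty}$ cited in the paper; your choice is in fact the one consistent with the stated threshold $s>5/2$.
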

	\begin{proof}
		Using $H^{s-2}\hookrightarrow W^{1,\infty}$, the continuity of the Hilbert transform for $s\geq0$ and  Remark \ref{Remark xi with f}, one can prove the above estimates directly.  We omit the details for exposition clearness.
	\end{proof}

	\begin{Lemma}\label{Difference cancellation nlt}
		Let $X=\theta\in \X^s$ and $Y=\rho \in \X^s$. Then we have  that for $s>7/2$,
		\begin{align*}
		2\left(g(t,X)-g(t,Y),X-Y\right)_{\X^{s-2}}+\|h(t,X)-h(t,Y)\|^2_{\LL_2(\U;\X^{s-2})}\lesssim\,& \left(1+\|X\|^2_{\X^{s}}+\|Y\|^2_{\X^{s}}\right)\|X-Y\|^2_{\X^{s-2}}.
		\end{align*}
	\end{Lemma}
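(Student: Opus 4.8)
The plan is to follow the template of the proof of Lemma~\ref{Difference cancellation}, now simplified to a scalar unknown but complicated by the nonlocal velocity $\mathcal{H}\theta$. The essential point is that $\mathcal{H}$ is a Fourier multiplier of order zero, hence bounded on every $H^\sigma$, so it never costs derivatives; the only genuinely singular contribution is the transport term. Writing $w=X-Y=\theta-\rho$, I would first linearise the nonlinearity through the bilinear splitting $(\mathcal{H}\theta)\partial_x\theta-(\mathcal{H}\rho)\partial_x\rho=(\mathcal{H}\theta)\partial_x w+(\mathcal{H}w)\partial_x\rho$, so that, recalling \eqref{def:b:g:h:nonlocal} and \eqref{Define diff:nonlocal},
\begin{align*}
&2\left(g(X)-g(Y),w\right)_{\X^{s-2}}+\|h(X)-h(Y)\|^2_{\LL_2(\U;\X^{s-2})}\\
=\,&-2\left((\mathcal{H}\theta)\partial_x w,w\right)_{H^{s-2}}-2\left((\mathcal{H}w)\partial_x\rho,w\right)_{H^{s-2}}\\
&+\left(\sum_{k=1}^{\infty}\mathcal{L}^2_{\xi_k}w,w\right)_{H^{s-2}}+\sum_{k=1}^{\infty}\left\|\mathcal{L}_{\xi_k}w\right\|^2_{H^{s-2}}=:I_1+I_2+I_3+I_4.
\end{align*}

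For the transport term $I_1$ I would apply $D^{s-2}$, split off the commutator, and integrate by parts in the top-order part: the principal piece $\left((\mathcal{H}\theta)D^{s-2}\partial_x w,D^{s-2}w\right)_{L^2}=-\tfrac12\left((\partial_x\mathcal{H}\theta)D^{s-2}w,D^{s-2}w\right)_{L^2}$ is bounded by $\|\partial_x\mathcal{H}\theta\|_{L^\infty}\|w\|_{H^{s-2}}^2$, while the commutator $[D^{s-2},\mathcal{H}\theta]\partial_x w$ is controlled by Lemma~\ref{KP  commutator estimate}. Using the continuity of $\mathcal{H}$ on $H^{s-2}$ together with the embedding $H^{s-2}\hookrightarrow W^{1,\infty}$, this gives $|I_1|\lesssim\|X\|_{\X^s}\|w\|^2_{H^{s-2}}$. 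The term $I_2$ is milder because $w$ is measured two derivatives below its own regularity: since $s>7/2$ implies $s-2>1/2$, $H^{s-2}$ is an algebra, and with $\mathcal{H}$ bounded on $H^{s-2}$ one gets $|I_2|\lesssim\|\mathcal{H}w\|_{H^{s-2}}\|\partial_x\rho\|_{H^{s-2}}\|w\|_{H^{s-2}}\lesssim\|Y\|_{\X^s}\|w\|^2_{H^{s-2}}$. For the remaining Lie-derivative contribution $I_3+I_4$, I would rewrite it in $L^2$ as $\left(D^{s-2}\sum_k\mathcal{L}^2_{\xi_k}w,D^{s-2}w\right)_{L^2}+\sum_k\left(D^{s-2}\mathcal{L}_{\xi_k}w,D^{s-2}\mathcal{L}_{\xi_k}w\right)_{L^2}$ and invoke the cancellation estimate of Lemma~\ref{Liecancellations} with $\mathcal{P}=D^{s-2}\in\mbox{OPS}^{s-2}_{1,0}$, yielding $|I_3+I_4|\lesssim\|w\|^2_{H^{s-2}}$. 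Collecting the four bounds produces $\lesssim(1+\|X\|_{\X^s}+\|Y\|_{\X^s})\|w\|^2_{H^{s-2}}$, which is absorbed into the claimed $(1+\|X\|^2_{\X^s}+\|Y\|^2_{\X^s})\|X-Y\|^2_{\X^{s-2}}$.

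The main obstacle I expect is the transport term $I_1$: it is the only place where $\partial_x w$ appears at the full regularity $H^{s-2}$, so the top-order derivative must be removed either by integration by parts or by the commutator structure, and the nonlocal factor $\mathcal{H}\theta$ has to be kept in $W^{1,\infty}$ for both manoeuvres. This is precisely what forces the threshold $s>7/2$, namely $s-2>3/2$, which simultaneously supplies $H^{s-2}\hookrightarrow W^{1,\infty}$ for $I_1$ and $I_2$ and the order condition needed to apply Lemma~\ref{Liecancellations} to $I_3+I_4$; below this threshold the cancellation and the commutator bounds would no longer close.
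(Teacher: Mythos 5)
Your proposal is correct and follows essentially the same route as the paper: the same bilinear splitting of the transport nonlinearity (up to which factor carries $\mathcal{H}$ of the difference), the commutator-plus-integration-by-parts treatment of the top-order transport term via Lemma~\ref{KP  commutator estimate} and the boundedness of $\mathcal{H}$, and the application of Lemma~\ref{Liecancellations} with $\mathcal{P}=D^{s-2}$ to the Lie-derivative terms, with the threshold $s>7/2$ arising for exactly the reasons you state. The only cosmetic difference is that you bound the lower-order term by the algebra property of $H^{s-2}$ where the paper uses a second commutator estimate; both are equivalent here.
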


	\begin{proof}
		Recalling \eqref{def:b:g:h:nonlocal} and \eqref{Define diff:nonlocal}, we have
		\begin{align*}
		&\hspace*{-0.5cm}2\left(g(X)-g(Y),X-Y\right)_{\X^{s-2}}+\|h(t,X)-h(t,Y)\|^2_{\LL_2(\U;\X^{s-2})}\\
		=\,& 2((\mathcal{H}\rho)\rho_x-(\mathcal{H}\theta)\theta_x,\theta-\rho)_{H^{s-2}}
		+\left(\sum_{k=1}^{\infty} \mathcal{L}^2_{\xi_{k}}(\theta-\rho),\theta-\rho\right)_{H^{s-2}}
		+\sum_{k=1}^{\infty}
		\left(\mathcal{L}_{\xi_{k}}(\theta-\rho),\mathcal{L}_{\xi_{k}}(\theta-\rho)\right)_{H^{s-2}}.
		\end{align*}
		Because $H^{s-2}\hookrightarrow W^{1,\infty}$, we use Remark \ref{Remark xi with f}, Lemma \ref{KP  commutator estimate}, the continuity of the Hilbert transform and integration by parts to bound the first term as
		\begin{align*}
		&\hspace*{-0.5cm}((\mathcal{H}\rho)\rho_x-(\mathcal{H}\theta)\theta_x,\theta-\rho)_{H^{s-2}}\\
		\lesssim\,& \left|\left(D^{s-2}(\mathcal{H}\rho)(\theta-\rho)_x,D^{s-2}(\theta-\rho)\right)_{L^2}\right|
		+\left|\left(D^{s-2}(\mathcal{H}(\theta-\rho))\theta_x,D^{s-2}(\theta-\rho)\right)_{L^2}\right|\\
		\lesssim\,& \norm{[D^{s-2},\mathcal{H}\rho](\theta-\rho)_x}_{L^2}\norm{\theta-\rho}_{H^{s-2}}
		+\norm{\partial_x\mathcal{H}\rho}_{L^{\infty}}\norm{\theta-\rho}^2_{H^{s-2}}\\
		&+\norm{[D^{s-2},\mathcal{H}(\theta-\rho)]\theta_x}_{L^2}\norm{\theta-\rho}_{H^{s-2}}
		+\norm{\partial_x\mathcal{H}(\theta-\rho)}_{L^{\infty}}\norm{\theta-\rho}^2_{H^{s-2}}\\
		\lesssim\, &\left(\norm{\rho}_{H^s}+\norm{\theta}_{H^s}\right)\norm{\theta-\rho}^2_{H^{s-2}}.
		\end{align*}
		The last two terms can be bounded by invoking Lemma
		\ref{Liecancellations} 
		to obtain
		\begin{align*}
		&\left(D^{s-2}\sum_{k=1}^{\infty}\mathcal{L}^2_{\xi_{k}}(\theta-\rho),
		D^{s-2}(\theta-\rho)\right)_{L^2}
		+\sum_{k=1}^{\infty}
		\left(D^{s-2}\mathcal{L}_{\xi_{k}}(\theta-\rho), D^{s-2}\mathcal{L}_{\xi_{k}}(\theta-\rho)\right)_{L^2}
		\lesssim\, \|\theta-\rho\|^2_{H^{s-2}}.
		\end{align*}
		Collecting the above estimates, we obtain the desired result.
	\end{proof}

	\subsubsection{Proof of Theorem \ref{transport:nonlocal results}}
	To avoid unnecessary repetition,
	we just sketch the main points of the proof since it is similar to the proof of Theorem \ref{SCH2 results}. Recalling \eqref{Define Je}, we  define
	\begin{align}\label{family:gn:nlt}
	g_\e(X)&
	=-J_\e [\left(\mathcal{H}J_\e \theta \right)\partial_{x}J_\e \theta]+\frac{1}{2} J^3_\e\sum_{k=1}^{\infty} \mathcal{L}^2_{\xi_{k}}J_\e \theta.
	\end{align}
	Let
	\begin{align}\label{family: h-n,k:nlt}
	h_{\e}^k(X)&= -J_\e\mathcal{L}_{\xi_{k}}J_\e \theta.
	\end{align}
	Similar to \eqref{Define diffusion},  we define $h_\e(X)\in \LL(\U;\X^s)$ such that
	\begin{align}\label{family:hn:nlt}
	h_{\e}(X)(e_k)=h_{\e}^k(X),\ \ k\in\N.
	\end{align}
	We now prove that all the estimates in Assumption \ref{Assum-A}  hold true  for 
	\begin{itemize}
		\item $\X=\X^s$, $\Y=\X^{s-1}$ and $\Z=\X^{s-2}$, where $\X^s$ and $\V$ are given in \eqref{U Xs V nlt},
		\item $b$, $g$, $h$, $g_\e$ and $h_\e$ are given in \eqref{def:b:g:h:nonlocal}, \eqref{family:gn:nlt}, \eqref{family: h-n,k:nlt} and \eqref{family:hn:nlt}, respectively,
		\item $k(\cdot)\equiv1$, $ f(\cdot)=C(1+\cdot)$, $q(\cdot)=C(1+\cdot^5)$ for some $C>1$ large enough,
		\item $T_\e =Q_\e ={\tilde J}_\e $, 		where ${\tilde J}_\e $ is given in \eqref{Define Te}.
		
	\end{itemize}
	
	Let $s>7/2$.
	Obviously, $\X\hookrightarrow\Y\hookrightarrow\hookrightarrow\Z\hookrightarrow \V$.  	Moreover, Lemma \ref{Lemma for g h nlt} implies $g: \X^s\mapsto\X^{s-2}$ and $h: \X^s\mapsto \LL_2(\U;\X^{s-1})$.  Hence the stochastic integral in \eqref{Abstarct nlp} is a well defined $\X^{s-1}$-valued local martingale. It is  easy  to check that $g$ and $h$ are continuous in $X\in\X^s$.

	\textbf{Checking \ref{b in X}:} Trivial, since $b(t,X)\equiv 0$. 
	
	\textbf{Checking \ref{gn hn condition}:} By the construction of $g_\e(X)$ and $h_\e(X)$,  \eqref{Je r>s}, Lemma \ref{Lemma for g h nlt} and Assumption \ref{Assum-xi},  \ref{gn hn condition} is verified.
	
	\textbf{Checking \ref{growth gn hn}:} 
	Since \eqref{family: h-n,k:nlt} enjoys similar  estimates as we established for  \eqref{family: h-n,k}, the first part  \eqref{A31} can be proved as before. Therefore, we just need to show \eqref{A32}.  For all $X=\theta \in \X^s$, we have
	\begin{align*}
	2\left(g_\e(X),X\right)_{\X^{s}}+\|h_\e(X)\|^2_{\LL_2(\U;\X^{s})}=\,& -2(D^sJ_\e [\mathcal{H} J_\e \theta \partial_{x}J_\e \theta], D^s\theta)_{L^2} \\
	&+\left(D^{s}J^3_\e \sum_{k=1}^{\infty}\mathcal{L}^2_{\xi_{k}}J_\e \theta, D^{s} \theta\right)_{L^2}
	+\sum_{k=1}^{\infty}
	\left(D^{s}J_\e\mathcal{L}_{\xi_{k}}J_\e \theta, D^{s}J_\e\mathcal{L}_{\xi_{k}}J_\e \theta\right)_{L^2} \\
	=:\,&\sum_{i=1}^{3}E_i.
	\end{align*}
	Invoking Lemma  \ref{Liecancellations}  with $\mathcal{P}=D^{s}J_\e \in \mbox{OPS}^{s}_{1,0}$ (cf. Lemma \ref{lemma;pseudo}), we have that
	\begin{align*}
	E_2+E_3=\,& \left(D^{s}J_\e\sum_{k=1}^{\infty}\mathcal{L}^2_{\xi_{k}}J_\e  \theta,  D^{s}J_\e J_\e\theta\right)_{L^2}
	+\sum_{k=1}^{\infty}
	\left(D^{s}J_\e\mathcal{L}_{\xi_{k}}J_\e \theta, D^{s}J_\e\mathcal{L}_{\xi_{k}}J_\e \theta\right)_{L^2}  \\
	\leq\, &C \|J_\e\theta\|^2_{H^{s}}\leq C \|\theta\|^2_{H^{s}}.
	\end{align*}
	To bound the first term, we notice that $H^r\hookrightarrow W^{1,\infty}$, then we use Lemma \ref{KP  commutator estimate}, integration by parts, \eqref{Je Te Hs} and \eqref{H Ds} to find
	\begin{align*}
	|E_{1}|=\,&2\left|\left(\mathcal{H}J_\e \theta \partial_{x}J_\e D^{s} \theta, D^{s}J_{\e}\theta\right)_{L^2} + 2\left(\left[D^{s}, \mathcal{H}J_{\e}\theta \right]\partial_{x}J_{\e}\theta, D^{s}J_{\e}\theta \right)_{L^2}\right| \\
	\lesssim\, & \norm{\mathcal{H}\partial_{x}\theta}_{L^{\infty}}\norm{D^{s}J_{\e}\theta}^{2}_{L^2}+\norm{\left[D^{s}, \mathcal{H}J_{\e}\theta \right]\partial_{x}J_{\e}\theta}_{L^2}\norm{D^{s}J_{\e}\theta}_{L^2} \\
	\lesssim\, & \norm{\mathcal{H}\partial_{x}J_{\e}\theta}_{L^{\infty}}\norm{D^{s}J_{\e}\theta}^{2}_{L^2} + \norm{\partial_{x}\mathcal{H}J_{\e}\theta}_{L^\infty}\norm{D^{s-1}\partial_{x}J_{\e}\theta}_{L^{2}}+\norm{D^{s}\mathcal{H}J_{\e}\theta}_{L^2}\norm{\partial_{x}J_{\e}\theta}_{L^{\infty}} \\
	\lesssim\, & \norm{\mathcal{H}\partial_{x}\theta}_{L^\infty}\norm{\theta}^{2}_{H^{s}}.
	\end{align*}
	Combining the above estimates, we arrive at
	\begin{align*}
	2\left(g_\e(X),X\right)_{\X^s}+\|h_\e(t,X)\|^2_{\LL_2(\U;\X^s)}
	\lesssim\, (1+\norm{\mathcal{H}\partial_{x}\theta}_{L^\infty})\|\theta\|^2_{H^{s}}\leq f\big(\|X\|_{\V}\big)\|X\|^2_{\X^s},
	\end{align*}
	which implies \eqref{A32}. 
	
	\textbf{Checking \ref{uniqueness requirement}:} The dense embedding $\X=\X^s\hookrightarrow\Z=\X^{s-2}$ and \eqref{B11} is clear.   Applying Lemma \ref{Difference cancellation nlt}, we infer \eqref{B12}.
	
	\textbf{Checking \ref{continuity requirement}:}  As before, this is a direct consequence of \ref{blow-up requirement}, which will be shown next. 
	
	\textbf{Checking \ref{blow-up requirement}:} Following the same way as we  proved \eqref{final sch2}, we 
	have that for some $C>1$,
	\begin{align}\label{STNL:refined estimate 1}
	\sum_{k=1}^{\infty}\left|\left(T_\e  h(\theta)\xi_k,T_\e  \theta\right)_{H^s}\right|^2\leq C\|\theta\|^2_{H^s}\|T_\e  \theta\|^2_{H^s}.
	\end{align}
	Hence \eqref{blow-up requirement 2} holds. Now
	we just need to prove \eqref{blow-up requirement 1}. Indeed,
	\begin{align*}
	&\hspace*{-0.5cm}2\left(T_\e  g(X),T_\e  X\right)_{\X^{s}} +\|T_\e  h(X)\|^2_{\LL_2(\U;\X^{s})} \\
	=\,& -2(T_\e  [\mathcal{H}\theta \theta_x],T_\e  \theta)_{H^{s}} 
	+ \left(D^{s}T_\e  \sum_{k=1}^{\infty}\mathcal{L}^2_{\xi_{k}} \theta, D^{s} T_\e \theta\right)_{L^2}   +\sum_{k=1}^{\infty}
	\left(D^{s}T_\e \mathcal{L}_{\xi_{k}} \theta, D^{s}T_\e  \mathcal{L}_{\xi_{k}}\theta\right)_{L^2}\\
	=:&\sum_{i=1}^{3}R_i.
	\end{align*}
	
	Using Lemma \ref{KP  commutator estimate}, \eqref{H Ds}, \eqref{H Hs}, integration by parts, Lemma \ref{Te commutator},  and \eqref{Je Te Hs}, we have
	\begin{align*}
	|R_{1}| \leq\,&2\left|\left(\left[D^s,
	\mathcal{H}\theta\right]\theta_x,D^sT^2_\e \theta \right)_{L^2}+
	\left([T_\e ,\mathcal{H}\theta]D^s\theta_x, D^sT_\e  \theta\right)_{L^2}
	+\left(\mathcal{H}\theta D^sT_\e  \theta_x, D^sT_\e  \theta\right)_{L^2}\right|\notag\\
	\leq\,&  C\|\theta_x\|_{L^{\infty}}\|\theta\|^2_{H^s}
	+ C\|\mathcal{H}\theta_x\|_{L^{\infty}}\|\theta\|^2_{H^s}
	\lesssim\, (\|\theta_x\|_{L^{\infty}}+\|\mathcal{H}\theta_x\|_{L^{\infty}})\|\theta\|^2_{H^s}.\label{}
	\end{align*}
	Using Lemma \eqref{Liecancellations} with $\mathcal{P}= D^{s}T_{\e}\in \mbox{OPS}^{s}_{1,0}$ (cf. Lemma \ref{lemma;pseudo}), we have that
	\begin{align*}
	R_{2}+R_{3}=\left(D^{s}T_\e  \sum_{k=1}^{\infty}\mathcal{L}^2_{\xi_{k}} \theta, D^{s} T_\e \theta\right)_{L^2}
	+\sum_{k=1}^{\infty}
	\left(D^{s}T_\e  \mathcal{L}_{\xi_{k}} \theta, D^{s}T_\e \mathcal{L}_{\xi_{k}}\theta\right)_{L^2} \lesssim\,  \|\theta\|^2_{H^{s}}.
	\end{align*}
	Combining the above estimates,
	we find some $C>1$ such that,
	\begin{equation}
	2\left(T_\e  g(X),T_\e  X\right)_{\X^{s}} +\|T_\e  h(X)\|^2_{\LL_2(\U;\X^{s})} 
	\leq C (1+\|\theta_x\|_{L^{\infty}}+\|\mathcal{H}\theta_x\|_{L^{\infty}})\|\theta\|^2_{H^s}.
	\label{STNL:refined estimate 2}
	\end{equation}
	Due to $\V=H^r\hookrightarrow W^{1,\infty}$ and \eqref{H Hs},  \eqref{blow-up requirement 1} holds true.
	Therefore, we can apply Theorem \ref{Abstract irregular SDE: T} to obtain the existence, uniqueness of pathwise solutions, together with the blow-up criterion
	\begin{equation*}
	\textbf{1}_{\left\{\limsup_{t\rightarrow \tau^*}\|\theta(t)\|_{H^s}=\infty\right\}}
	=\textbf{1}_{\left\{\limsup_{t\rightarrow \tau^*}\|\theta(t)\|_{H^{r}}=\infty\right\}}\ \ \p-a.s.,
	\end{equation*}
	where $r\in(3/2,s-2)$ is arbitrary.
	Now we only need to improve the above blow-up criterion to \eqref{STNL blowup criterion}. To this end, we proceed as in the proof of \eqref{Blow-up criterion} (cf. \eqref{tau1=tau2}).
	For $m,l\in\N$, we define 
	\begin{align*}
	\sigma_{1,m}=\inf\left\{t\geq0: \|\theta(t)\|_{H^s}\geq m\right\},\ \ \
	\sigma_{2,l}=\inf\left\{t\geq0: \|\theta_x(t)\|_{L^{\infty}}+\|\mathcal{H}\theta_x\|_{L^{\infty}}\geq l\right\},
	\end{align*}
	where $\inf\emptyset=\infty$.
	Denote  $\sigma_1=\lim_{m\rightarrow\infty}\sigma_{1,m}$ and $\sigma_2=\lim_{l\rightarrow\infty}\sigma_{2,l}$. 
	Now we fix a $r\in(3/2,s-2)$. Then
	$$\|\theta_x(t)\|_{L^{\infty}}+\|\mathcal{H}\theta_x\|_{L^{\infty}}\lesssim\, \|\theta(t)\|_{H^r}\lesssim\, \|\theta(t)\|_{H^s}.$$
	From this, it is obvious that $\sigma_1\leq \sigma_2$ $\p-a.s.$
	To prove $\sigma_1=\sigma_2$ $\p-a.s.$, we need to prove $\sigma_{1}\geq \sigma_2$ $\p-a.s.$ In the same way as we prove \eqref{tau1=tau2}, we only need to prove
	\begin{align}
	\p\left\{\sup_{t\in[0,\sigma_{2,l}\wedge N]}\|\theta(t)\|_{H^s}<\infty\right\}=1
	\ \ \ \forall\ N,l\in\N.\label{s2<s1 condition}
	\end{align}
	
	It follows from \eqref{STNL:refined estimate 1} and \eqref{STNL:refined estimate 2} that
	\begin{align*}
	&\hspace*{-0.5cm}E\sup_{t\in[0,\sigma_{2,l}\wedge N]}\|T_\e  \theta\|^2_{H^s}-\E\|T_\e  \theta_0\|^2_{H^s}\\
	\leq\,&C\E\left(\int_0^{\sigma_{2,l}\wedge N}
	\|\theta\|^2_{H^s}\|T_\e  \theta\|^2_{H^s}\dd t\right)^{\frac12}
	+C\E\int_0^{\sigma_{2,l}\wedge N}  (1+\|\theta_x\|_{L^{\infty}}+\|\mathcal{H}\theta_x\|_{L^{\infty}})\|\theta\|^2_{H^s}\,{\rm d}t\nonumber\\
	\leq\,&C\E\left(\sup_{t\in[0,\sigma_{2,l}\wedge N]}\|T_\e  \theta\|_{H^s}^{2}\int_0^{\sigma_{2,l}\wedge N} 
	\|\theta\|^2_{H^s}\dd t\right)^{\frac12}
	+C_{l}\E\int_0^{\sigma_{2,l}\wedge N} \|\theta\|_{H^s}^{2}\,{\rm d}t\notag\\
	\leq\,&\frac{1}{2}\E\sup_{t\in[0,\sigma_{2,l}\wedge N]}\|T_\e  \theta\|_{H^s}^{2}
	+C_{l}\int_0^{M} \E\sup_{t'\in[0,t\wedge \sigma_{2,l}]}\|\theta(t')\|_{H^s}^{2}\,{\rm d}t,
	\end{align*} 
	where $C_l=C(1+l)$ for some $C>1$ large enough. Therefore we arrive at
	\begin{align*}
	\E\sup_{t\in[0,\sigma_{2,l}\wedge N]}\|T_\e  \theta\|^2_{H^s}-2\E\|T_\e  \theta_0\|^2_{H^s} 
	\leq\,& C_{l}\int_0^{M} \E\sup_{t'\in[0,t\wedge \sigma_{2,l}]}\|\theta(t')\|_{H^s}^{2}\,{\rm d}t.
	\end{align*} 
	Hence one can send $\e\rightarrow0$ and then use  Gr\"{o}nwall's inequality to derive that for each $l,N\in\N$,
	$$\E\sup_{t\in[0,\sigma_{2,l}\wedge N]}\|\theta(t)\|^2_{H^s}
	\leq C\E\|\theta_0\|^2_{H^s}\exp\left(C_lN\right)<\infty,$$ 
	which is \eqref{s2<s1 condition}. Hence  we obtain \eqref{STNL blowup criterion} and finish the proof.
	
	\subsection{Further examples}\label{Sect:further examples}
	Actually, the abstract framework for \eqref{Abstact irregular SDE} can be applied to show the local existence theory to a broader class of fluid dynamics equations. For instance, consider the SALT surface quasi-geostrophic (SQG) equation:
	\begin{equation}\label{general CCF}
	\left\{\begin{aligned}
	&{\rm d}\theta + u\cdot\nabla\theta \ dt+ \sum_{k=1}^{\infty}( \xi_{k}\cdot\nabla\theta)\circ {\rm d}W_k=0,\ \ x\in\T^2, \\
	&u=\mathcal{R}^{\perp}\theta,
	\end{aligned} \right.
	\end{equation}
	where $\mathcal{R}$ is the 
	Riesz transform in $\T^2$, and 
	$\{W_t^k\}_{k\in\N}$ is a sequence of standard 1-D independent Brownian motions. The deterministic version of \eqref{general CCF} reduces to the SQG  equation describing
	the dynamics of sharp fronts
	between masses of hot and cold air (cf.~\cite{Constantin-Majda-Tabak-1994}). The SQG equations have been studied intensively, and we cannot survey the vast research literature
	here. However, the stochastic version with transport noise as in \eqref{general CCF} has not been studied yet as far as we know. 
	
	To apply Theorem \ref{Abstract irregular SDE: T} to \eqref{general CCF} to  get a local theory, we introduce some notations.
	For any real number $s$, $\Ls=(-\Delta)^{s/2}$ 
	are defined by
	$\widehat{\Ls f}(k)=|k|^s\widehat{f}(k)$. Then we let
	\begin{equation}
	\X^s=H^s\cap \left\{f:\int_{\T^2}f\ \dd x=0\right\}.\label{solution space SQG}
	\end{equation}
	We notice that with the mean-zero condition, $\X^s$ is Hilbert space for $s>0$ with inner product $(f,g)_{\X^s}=(\Ls f,\Ls g)_{L^2}$ and homogeneous Sobolev norm $\|f\|_{\X^s}=\|\Ls f\|_{L^2}$. However, it can be shown that if $f\in\X^s$ for $s>0$, then, cf. \cite{Bahouri-Chemin-Danchin-2011},
	\begin{equation}\label{Xs Hs mean-zero}
	\|f\|_{H^s}\lesssim\,\|f\|_{\X^s}\lesssim\,\|f\|_{H^s}.
	\end{equation}

	\begin{Assumption}\label{Assum-xi-div}
		For all $s>1$,
		$\{\xi_{k}(x):\T^2\rightarrow\R^2\}_{k\in\N}\subset H^s\cap \left\{f\in H^1:\nabla\cdot f=0\right\}$ and $\sum_{k\in\N}\|\xi_k\|_{H^s}<\infty$.
	\end{Assumption}
	
	Then we have the following local results for \eqref{general CCF}:
	\begin{Theorem}\label{SQG results}
		Let	$s>4$, $\s=(\Omega, \mathcal{F},\p,\{\mathcal{F}_t\}_{t\geq0}, \W)$ be a  stochastic basis fixed in advance and $\X^s$ be given in \eqref{solution space SQG}. Let Assumption \ref{Assum-xi-div} hold true.  If
		$\theta_0\in L^2(\Omega;\X^s)$ is an $\mathcal{F}_0$-measurable random variable, then \eqref{general CCF} has a local unique pathwise solution $\theta$ starting from $\theta_0$ such that 
		\begin{equation*} 
		\theta(\cdot\wedge \tau)\in L^2\left(\Omega; C\left([0,\infty);\X^s\right)\right).
		\end{equation*} 
		Moreover, the maximal solution $(\theta,\tau^*)$ to \eqref{general CCF} satisfies 
		\begin{equation*} 
		\textbf{1}_{\left\{\limsup_{t\rightarrow \tau^*}\|\theta(t)\|_{\X^s}=\infty\right\}}
		=\textbf{1}_{\left\{\limsup_{t\rightarrow \tau^*}\|\theta_x(t)\|_{L^{\infty}}+\|(\mathcal{R}\theta_x)(t)\|_{L^{\infty}}=\infty\right\}}\ \ \p-a.s.
		\end{equation*}
	\end{Theorem}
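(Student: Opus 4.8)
The plan is to recast \eqref{general CCF} as an instance of the abstract singular SDE \eqref{Abstact irregular SDE} and to verify Assumption \ref{Assum-A}, so that Theorem \ref{Abstract irregular SDE: T} applies directly, exactly in the spirit of Sections \ref{Sec:SCH2} and \ref{Sec:STNL}. First I would pass from the Stratonovich to the It\^o formulation. Writing $\mathcal{L}_{\xi_k}\theta=\xi_k\cdot\nabla\theta$ (which is the Lie derivative on scalars precisely because $\nabla\cdot\xi_k=0$ under Assumption \ref{Assum-xi-div}), the cross-variation computation as in \eqref{SCH2 transport noise 2} yields
\begin{equation*}
\dd\theta + (\mathcal{R}^{\perp}\theta)\cdot\nabla\theta\,\dd t - \tfrac12\sum_{k=1}^{\infty}\mathcal{L}^2_{\xi_k}\theta\,\dd t = -\sum_{k=1}^{\infty}\mathcal{L}_{\xi_k}\theta\,\dd W_k.
\end{equation*}
I would then set $X=\theta$, $b\equiv 0$,
\begin{equation*}
g(X)=-(\mathcal{R}^{\perp}\theta)\cdot\nabla\theta+\tfrac12\sum_{k=1}^{\infty}\mathcal{L}^2_{\xi_k}\theta,\qquad h^k(X)=-\mathcal{L}_{\xi_k}\theta,
\end{equation*}
and choose $\X=\X^s$, $\Y=\X^{s-1}$, $\Z=\X^{s-2}$ with $\X^s$ as in \eqref{solution space SQG}, together with $\V=H^r$ for a fixed $r\in(2,s-2)$. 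Since $d=2$, the requirement $r>2=1+d/2$ guarantees $H^r\hookrightarrow W^{1,\infty}$, and the difference estimate will be run at the level $\X^{s-2}$, where $s-2>2$ is needed; this is exactly what forces $s>4$. The regular approximations $g_\e,h_\e$ are built by inserting the mollifiers $J_\e$ as in \eqref{family:gn:nlt}--\eqref{family:hn:nlt}, I take $T_\e=Q_\e=\tilde J_\e$, and I note that $\mathcal{R}^{\perp}$, $\mathcal{L}_{\xi_k}$ and $\tilde J_\e$ all preserve the mean-zero subspace, so the flow stays in $\X^s$ and the homogeneous norm \eqref{Xs Hs mean-zero} is equivalent to the $H^s$ norm throughout.

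Next I would verify the conditions of Assumption \ref{Assum-A} one by one. Because $b\equiv0$, both \ref{b in X} and the first bound \eqref{B11} in \ref{uniqueness requirement} are trivial. The growth bounds in \ref{gn hn condition}, and the Sobolev estimates $\|g(X)\|_{\X^{s-2}}\lesssim 1+\|X\|_{\X^s}^2$ and $\|h(X)\|_{\LL_2(\U;\X^{s-1})}\lesssim\|X\|_{\X^s}$, follow from the $L^2$-boundedness of the Riesz transform on every $H^\sigma$, the algebra property of $H^{s-1}$, and Remark \ref{Remark xi with f}, mirroring Lemma \ref{Lemma for g h nlt}; the convergences \eqref{A24}--\eqref{A25} are standard mollifier limits. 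For the renormalization bounds in \ref{growth gn hn} and for \ref{continuity requirement}, \ref{blow-up requirement}, the decisive ingredient is the cancellation Lemma \ref{Liecancellations}: the noise contribution $\sum_k 2(\mathcal{L}^2_{\xi_k}\theta,\cdot)+\sum_k\|\mathcal{L}_{\xi_k}\theta\|^2$ is controlled by $\sum_k\|\xi_k\|_{H^s}$ times a Sobolev norm of $\theta$, exactly as $E_2+E_3$ and $R_2+R_3$ were treated in Section \ref{Sec:STNL}, since $\mathcal{P}=\Ls\tilde J_\e\in\mathrm{OPS}^{s}_{1,0}$ by Lemma \ref{lemma;pseudo}.

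The main obstacle is the transport nonlinearity $(\mathcal{R}^{\perp}\theta)\cdot\nabla\theta$, and here I would use the divergence-free structure of $u=\mathcal{R}^{\perp}\theta$ to kill the top-order term:
\begin{equation*}
\bigl((\mathcal{R}^{\perp}\theta)\cdot\nabla \Ls\theta,\Ls\theta\bigr)_{L^2}=-\tfrac12\bigl((\nabla\cdot\mathcal{R}^{\perp}\theta)\,\Ls\theta,\Ls\theta\bigr)_{L^2}=0.
\end{equation*}
The remaining commutator $[\Ls,\mathcal{R}^{\perp}\theta\cdot\nabla]\theta$ is then estimated by the Kato--Ponce inequality (Lemma \ref{KP  commutator estimate}) together with the boundedness of $\nabla\mathcal{R}^{\perp}\theta=\mathcal{R}^{\perp}\nabla\theta$, producing a bound of the form $\bigl(\|\nabla\theta\|_{L^{\infty}}+\|\mathcal{R}^{\perp}\nabla\theta\|_{L^{\infty}}\bigr)\|\theta\|_{H^s}^2$; this is precisely the mechanism yielding the two quantities in the blow-up set. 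The difference estimate \eqref{B12} at level $\X^{s-2}$ is obtained the same way, using $H^{s-2}\hookrightarrow W^{1,\infty}$ and splitting $(\mathcal{R}^{\perp}\rho)\cdot\nabla\rho-(\mathcal{R}^{\perp}\theta)\cdot\nabla\theta$ into a transport difference and a velocity difference as in Lemma \ref{Difference cancellation nlt}. The most delicate bookkeeping sits in \ref{blow-up requirement}, where $T_\e$ must be commuted past both $\mathcal{R}^{\perp}\theta\cdot\nabla$ and $\mathcal{L}_{\xi_k}$; I would control the resulting $[T_\e,\cdot]$ terms via Lemma \ref{Te commutator} and absorb them, arriving at the analogues of \eqref{STNL:refined estimate 1} and \eqref{STNL:refined estimate 2}.

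With Assumption \ref{Assum-A} fully verified, Theorem \ref{Abstract irregular SDE: T} delivers local existence, uniqueness, the $L^2$ time continuity in $\X^s$, and the blow-up dichotomy between $\|\theta\|_{\X^s}$ and $\|\theta\|_{\V}=\|\theta\|_{H^r}$. To sharpen the $H^r$ criterion to the stated one in terms of $\|\nabla\theta\|_{L^{\infty}}+\|\mathcal{R}\nabla\theta\|_{L^{\infty}}$, I would repeat the stopping-time argument behind \eqref{tau1=tau2} and \eqref{s2<s1 condition}: define $\sigma_{2,l}$ through these $L^{\infty}$ norms, run the energy estimate for $\|\tilde J_\e\theta\|_{H^s}^2$ up to $\sigma_{2,l}\wedge N$ using the \eqref{STNL:refined estimate 2}-type bound, send $\e\to0$, and apply Gr\"onwall's inequality to conclude $\sup_{[0,\sigma_{2,l}\wedge N]}\|\theta\|_{H^s}<\infty$ almost surely, which gives the claimed criterion and completes the proof of Theorem \ref{SQG results}.
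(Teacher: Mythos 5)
Your proposal is correct and follows essentially the same route as the paper, which itself only sketches this proof by reducing it to the argument for Theorem \ref{transport:nonlocal results}: the same choice of spaces $\X=\X^s$, $\Y=\X^{s-1}$, $\Z=\X^{s-2}$, $\V$ of regularity $r\in(2,s-2)$, mollifier approximations with $T_\e=Q_\e=\tilde J_\e$, the cancellation Lemma \ref{Liecancellations} for the transport noise, the Kato--Ponce commutator (in its $\Ls$ form, using \eqref{Xs Hs mean-zero}) for the nonlinearity, and the preservation of the mean-zero condition under Assumption \ref{Assum-xi-div}. Your additional details — the divergence-free cancellation of the top-order transport term and the stopping-time refinement of the blow-up criterion — are exactly the steps the paper leaves implicit.
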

	
	\begin{proof}
		We only give a very quick sketch.  The  approximation of \eqref{general CCF} can be constructed as in the proof of Theorem \ref{transport:nonlocal results}. We only notice that if Assumption \ref{Assum-xi-div} is verified and $\theta_0$ has mean-zero, then the approximate solution $\theta_\e$ has also mean-zero. 
		Recalling that $\U$ is fixed in advance to define \eqref{define W},  we take
		$\X=\X^s$, $\Y=\X^{s-1}$, $\Z=\X^{s-2}$, $\V=\X^{r}$ with $2<r<s-2$ and $T_\e =Q_\e =T_\e $. One can basically go along the lines as in the proof of Theorem \ref{transport:nonlocal results} with using the $\Ls$-version of Lemma \ref{KP  commutator estimate} (see also in \cite{Kato-Ponce-1988-CPAM,Kenig-Ponce-Vega-1991-JAMS}) to estimate the nonlinear term.  For the noise term, after writing it into the It\^{o} form, one can use Lemma \ref{Liecancellations} and \eqref{Xs Hs mean-zero} to estimate the corresponding two terms. For the sake of brevity, we omit the  details.
	\end{proof} 
	
	\begin{Remark}
		If the relation $u=\mathcal{R}^{\perp}\theta$ in \eqref{general CCF} is replaced by $ u= \mathcal{R}^{\perp}\Lambda^{\alpha}u$ with $\alpha\in[-1,0]$, \eqref{general CCF} becomes a SALT 2-D Euler-$\alpha$ model in vorticity form, 
		which interpolates with the SALT 2-D Euler equations \cite{Crisan-Lang-2019} ($\alpha=-1$) and the SALT SQG equations ($\alpha=0$).  If $u=\mathcal{R}^{\perp}\mathcal{R}_{1}\theta$ in \eqref{general CCF}, then \eqref{general CCF} is the SALT incompressible porous medium equation, where $\theta$ is now explained as the density of the incompressible fluid moving through a homogeneous porous domain. For the deterministic incompressible porous medium equation, we refer to \cite{Castro-Cordoba-Gancedo-Orive-2009}. Both of them with SALT noise $\sum_{k=1}^{\infty} (\xi_{k}\cdot\nabla\theta)\circ {\rm d}W_k$ have not been studied. Similar to Theorem \ref{SCH2 results}, our general framework \ref{Abstract irregular SDE: P} is also applicable to them. 
	\end{Remark}
	
	\begin{Remark}\label{LA case remark}
		It is worthwhile remarking that, a new framework called Lagrangian-Averaged Stochastic
		Advection by Lie Transport (LA SALT) has been  developed for a class of stochastic partial differential equations in \cite{Alonso-etal-2020-JSP,Drivas-Leahy-Holm-2020}. For LA SALT  the velocity field is randomly transported by white-noise vector fields as well as by its own average over realizations of this noise.  For the even more general distribution-path dependent case of transport type equations, we refer to \cite{Ren-Tang-Wang-2020-Arxiv}. Generally speaking, the distribution of the solution is a global object on the path space, and it does not exist  for explosive stochastic processes whose paths are killed at the life time. For a local theory of distribution dependent SDEs/SPDEs, we have to either consider the non-explosive setting or modify the ``distribution" by a local notion (for example, conditional distribution given by solution does not blow up at present time). Here, we focus our attention to the abstract framework for SPDEs with SALT noise. The general case with LA SALT is left as future work. 
	\end{Remark}
	
	\section*{Acknowledgements}
	D.~Alonso-Or\'{a}n is deeply indebted to Antonio C\'{o}rdoba  for his helpful conversations about the theory of pseudo-differential operators. H. Tang benefited greatly from many insightful discussions with  Professor Feng-Yu Wang. 
	
	\appendix\section{Auxiliary results}\label{sec:appendix}	
	
	In this appendix we formulate and prove some estimates employed in the  proofs above. We start from mollifiers which can preserve periodicity. Let $j=j(x)$ be a Schwartz function such that $0\leq\widehat{j}(\xi)\leq1$ for all $\xi\in \R^d$ and $\widehat{j}(\xi)=1$ for any $|\xi|\leq1$. Define for $\e \in (0,1)$
	the mollifier 
	\begin{equation}\label{Define Je}
	J_{\varepsilon}g(x):=(j_{\varepsilon}\star g)(x),
	\end{equation}
	where $j_{\varepsilon}(x)=\frac{1}{(2\pi)^d}\sum_{k\in{\mathbb Z^d}}\widehat{j}(\varepsilon k){\rm e}^{{\rm i}x\cdot k}$.
	The following operator ${\tilde J}_\e $  is also fundamental for the approximation and  defined by
	\begin{equation}\label{Define Te}
	{\tilde J}_\e  g(x):=(1-\e^2 \Delta)^{-1}g(x)= \sum_{k\in\mathbb{Z}^d} \left(1+\e^2 |k|^2\right)^{-1}  \widehat{g}(k)\, {\rm e}^{{\rm i}x\cdot k}.
	\end{equation}
	
	For any $u,v\in H^s$, $J_\e$ and ${\tilde J}_\e $ satisfy, cf. \cite{Tang-2018-SIMA,Tang-2020-Arxiv},
	\begin{align}
	\|u-J_{\varepsilon}u\|_{H^r}&\sim o(\varepsilon^{s-r}),\ \ r\leq s,\label{Je r<s}\\
	\|J_{\varepsilon}u\|_{H^{r}}&\lesssim\,  \varepsilon^{s-r}\|u\|_{H^{s}},\ \ r> s,\label{Je r>s}\\
	[D^s,J_{\varepsilon}]=\,&[D^s,{\tilde J}_{\varepsilon}]=0,\label{Je Te Ds}\\
	(J_{\varepsilon}u, v)_{L^2}=(u, J_{\varepsilon}v)_{L^2},\ & ({\tilde J}_{\varepsilon}u, v)_{L^2}=(u, {\tilde J}_{\varepsilon}v)_{L^2},\label{Je Te self-adjoint} 
	\end{align}
	and
	\begin{align}
	\|J_{\varepsilon}u\|_{H^s},\|{\tilde J}_{\varepsilon}u\|_{H^s}&\leq \|u\|_{H^s}.\label{Je Te Hs} 
	\end{align}
	From the definition of the Hilbert transform $\mathcal{H}$ in \eqref{define Hilbert transform},  we have
	\begin{equation}\label{H Ds}
	[D^s,\mathcal{H}]=[\partial_x,\mathcal{H}]=[J_\e,\mathcal{H}]=0,
	\end{equation}
	and for any $s\geq 0$,
	\begin{equation}\label{H Hs}
	\|\mathcal{H} u\|_{H^s}\lesssim\, \|u\|_{H^s}.
	\end{equation}
	
	A pseudo-differential operator  $P(x,D)$ on the periodic torus $\mathbb{T}^d$ is an operator given by
	\begin{equation}\label{def:pseudodiff}
	p(x,D)f(x)=\frac{1}{(2\pi)^d}\displaystyle\sum_{k\in \mathbb{Z}^d} a(x,k) e^{ix\cdot k} \widehat{f}(k),
	\end{equation}
	where $P(x,D)$ belongs to a certain class and $a(x,k)$ is called the symbol of $P(x,D)$. For  $\rho,\delta\in [0,1]$, $s \in \mathbb{R}$, we define the H\"{o}rmander class of symbols $S^{m}_{\rho,\delta}$ to be the set of all symbols $a:\mathbb{T}^d\times \mathbb{Z}^d \to \mathbb{C}$ such that $a(\cdot,k)\in C^{\infty}(\mathbb{T}^d)$ for all $k\in \mathbb{Z}^d$ and for all $\alpha,\beta\in\mathbb{N}^{d}$, there exists a constant $C=C(\alpha,\beta)>0$ such that
	$$\abs{\Delta^{\alpha}_{k}\partial^{\beta}_{x} a(x,k)}\leq C \langle k \rangle^{s-\rho\abs{\alpha}+\delta\abs{\beta}},$$
	where $\langle k \rangle=(1+k^2)^{1/2}$ and for $g:\mathbb{Z}^{d}\to \mathbb{C}$,
	$$\Delta^{\alpha}_{k}g(k):=\displaystyle\sum_{\gamma\in\mathbb{N}^{d}, \gamma\leq \alpha} (-1)^{\abs{\alpha-\gamma}}\binom{\alpha}{\gamma}g(k+\gamma) $$
	is the finite difference operator of order $\alpha$ with step size one in each of the coordinates of the frequency variable $k$. In such a case we say the associated operator $p(x,D)$ defined by \eqref{def:pseudodiff} belongs to the class $\mbox{OPS}^{s}_{\rho,\delta}$.
	Then $J_\e$ and ${\tilde J}_\e $ also satisfy
	\begin{Lemma}[\cite{Hormander-1985-book-3,Taylor-1991-book}]\label{lemma;pseudo}
		Let $J_\e, {\tilde J}_\e $ be defined as in \eqref{Define Je} and \eqref{Define Te}, then the following properties hold true
		\begin{enumerate}
			\item \label{Prop1}  $J_{\e}\in \mbox{OPS}^{-\infty}_{1,0}$, ${\tilde J}_\e \in \mbox{OPS}^{-2}_{1,0}$ for every $\e\in(0,1)$;\\
			\item  \label{Prop2}   $\{J_{\e}\}_{0<\e <1}$ and $\left\{{\tilde J}_{\e}\right\}_{0<\e<1}$ are bounded subsets of  $\mbox{OPS}^{0}_{1,0}$;\\
			\item \label{Prop3}  If $p(x,D)\in \mbox{OPS}^{s}_{1,0}$, then  $p(x,D)J_{\e}\in\mbox{OPS}^{-\infty}_{1,0}$, $p(x,D){\tilde J}_{\e}\in\mbox{OPS}^{-\infty}_{1,0}$ for all $\e\in(0,1)$;\\
			\item \label{Prop4} If $p(x,D)\in \mbox{OPS}^{s}_{1,0}$, then
			$\left\{p(x,D)J_{\e}\right\}_{0<\e < 1}\subset\mbox{OPS}^{s}_{1,0}$ and $\left\{p(x,D){\tilde J}_{\e}\right\}_{0<\e< 1}\subset\mbox{OPS}^{s}_{1,0}$ are bounded.
		\end{enumerate}
	\end{Lemma}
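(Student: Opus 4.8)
The plan is to identify the toroidal symbols of $J_\e$ and $\tilde J_\e$ explicitly and then verify the four assertions by direct estimation in the difference calculus of \eqref{def:pseudodiff}, invoking composition only in its simplest Fourier-multiplier form. Since $J_\e$ is the convolution \eqref{Define Je}, one has $\widehat{J_\e g}(k)=\widehat{j}(\e k)\,\widehat{g}(k)$, so comparing with \eqref{def:pseudodiff} its symbol is $a_\e(x,k)=\widehat{j}(\e k)$, and \eqref{Define Te} shows that $\tilde J_\e$ has symbol $b_\e(x,k)=(1+\e^2|k|^2)^{-1}=\langle\e k\rangle^{-2}$. Both are Fourier multipliers, so $\partial_x^\beta a_\e=\partial_x^\beta b_\e=0$ for $\beta\ne0$ and the symbol conditions reduce to bounds on $\Delta_k^\alpha$ alone. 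For \ref{Prop1} (fixed $\e$): since $\widehat j$ is Schwartz, $a_\e(\cdot,k)$ decays faster than any power of $\langle k\rangle$; controlling $\Delta_k^\alpha a_\e$ by the continuous derivatives $\partial^\alpha\widehat j$ (themselves Schwartz) via a discrete Taylor formula yields $|\Delta_k^\alpha a_\e|\le C_{\e,\alpha,m}\langle k\rangle^{m}$ for every $m$, i.e. $J_\e\in\mbox{OPS}^{-\infty}_{1,0}$; the same estimate for $b_\e$, which decays like $\langle k\rangle^{-2}$ for fixed $\e$, gives $\tilde J_\e\in\mbox{OPS}^{-2}_{1,0}$.

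The heart of the matter is \ref{Prop2}, the bounds uniform in $\e$, and this is where I expect the main obstacle: the torus calculus replaces $\partial_k$ by the finite difference $\Delta_k$, so one must convert differences into continuous derivatives while keeping all constants independent of $\e$. Here I would split the frequency range. As $0\le\widehat j\le1$ gives $|a_\e|\le1=\langle k\rangle^0$, it remains to show $|\Delta_k^\alpha a_\e|\lesssim\langle k\rangle^{-|\alpha|}$ uniformly. Writing a first-order difference in the $i$-th coordinate as $\widehat j(\e(k+\mathbf e_i))-\widehat j(\e k)=\e\int_0^1(\partial_i\widehat j)(\e k+\e t\mathbf e_i)\dd t$ and iterating gains a factor $\e^{|\alpha|}$ together with a derivative of $\widehat j$ evaluated near $\e k$. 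On $\{\e|k|\le1\}$ one uses $\e^{|\alpha|}\lesssim\langle k\rangle^{-|\alpha|}$ directly; on $\{\e|k|>1\}$ one invokes the Schwartz decay $|\partial^\alpha\widehat j(\e k)|\lesssim\langle\e k\rangle^{-N}$ with $N\ge|\alpha|$ together with $\langle\e k\rangle\ge\e|k|$ and the elementary bound $(\e|k|)^{|\alpha|-N}\le1$ to absorb the estimate into $\langle k\rangle^{-|\alpha|}$. Thus $\{J_\e\}$ is bounded in $S^0_{1,0}$; the identical region-splitting applied to $b_\e=\langle\e k\rangle^{-2}$, which satisfies $0\le b_\e\le1$ and $|\Delta_k^\alpha b_\e|\lesssim\langle k\rangle^{-|\alpha|}$ uniformly, gives the uniform boundedness of $\{\tilde J_\e\}$ in $S^0_{1,0}$.

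Finally, \ref{Prop3} and \ref{Prop4} follow from composition in the multiplier case, which is exact. Because $J_\e$ and $\tilde J_\e$ are Fourier multipliers applied first, $p(x,D)[m(D)f](x)=(2\pi)^{-d}\sum_{k\in\mathbb Z^d}p(x,k)\,m(k)\,e^{ix\cdot k}\widehat f(k)$, so $p(x,D)J_\e$ has the exact symbol $p(x,k)\,a_\e(k)$ and $p(x,D)\tilde J_\e$ the symbol $p(x,k)\,b_\e(k)$, with no asymptotic remainder. Applying the discrete Leibniz rule for $\Delta_k^\alpha$ to these products and the symbol-class multiplication $S^{s_1}_{1,0}\cdot S^{s_2}_{1,0}\subset S^{s_1+s_2}_{1,0}$: by \ref{Prop1} the product with $a_\e$ (resp. $b_\e$) is of order $-\infty$, giving $p(x,D)J_\e,\ p(x,D)\tilde J_\e\in\mbox{OPS}^{-\infty}_{1,0}$, which is \ref{Prop3}; and by \ref{Prop2} together with $p\in S^s_{1,0}$ the products lie in $S^s_{1,0}$ with $\e$-uniform seminorms, which is \ref{Prop4}.
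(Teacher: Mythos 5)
The paper offers no argument for this lemma at all---it is stated with a citation to H\"ormander and Taylor---so your direct verification through the explicit toroidal symbols is a genuinely different, self-contained route. Its core is sound: $J_\e$ and ${\tilde J}_\e$ are Fourier multipliers with symbols $\widehat{j}(\e k)$ and $\langle \e k\rangle^{-2}$, the conversion of the finite differences $\Delta^\alpha_k$ into continuous derivatives with the gain of a factor $\e^{|\alpha|}$, and the split into $\e|k|\le 1$ (where $\e\lesssim\langle k\rangle^{-1}$) and $\e|k|>1$ (where the Schwartz decay of $\partial^\alpha\widehat{j}$, resp.\ the algebraic decay of $\langle\e k\rangle^{-2}$, absorbs the loss) correctly deliver the $\e$-uniform $S^0_{1,0}$ bounds in item (2). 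Likewise, because the mollifier acts first, the composed symbol is exactly the product $p(x,k)a_\e(k)$ with no asymptotic remainder, and the discrete Leibniz rule then gives item (4) with $\e$-uniform seminorms. (Minor typo: in your argument for item (1) you want $\langle k\rangle^{-m}$, not $\langle k\rangle^{m}$.)

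The one step that fails is your deduction of item (3) for ${\tilde J}_\e$. From your own item (1) you only know $b_\e\in S^{-2}_{1,0}$, so the product $p(x,k)b_\e(k)$ lies in $S^{s-2}_{1,0}$, not in $S^{-\infty}_{1,0}$; the sentence ``by item (1) the product with $a_\e$ (resp.\ $b_\e$) is of order $-\infty$'' is a non sequitur in the $b_\e$ case. Moreover, no proof can close this gap: taking $p(x,D)=\mathrm{Id}\in \mbox{OPS}^{0}_{1,0}$, the claim $p(x,D){\tilde J}_\e\in\mbox{OPS}^{-\infty}_{1,0}$ would force ${\tilde J}_\e\in\mbox{OPS}^{-\infty}_{1,0}$, which is incompatible with the symbol $\langle\e k\rangle^{-2}$ decaying only like $|k|^{-2}$ (consistently with item (1)). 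So item (3) as stated is false for ${\tilde J}_\e$---presumably a slip in the lemma, the correct conclusion being $p(x,D){\tilde J}_\e\in\mbox{OPS}^{s-2}_{1,0}$---and you should flag this rather than assert it is proved. The lapse is harmless for the rest of the paper, whose applications of the lemma only use the membership and uniform boundedness in $\mbox{OPS}^{s}_{1,0}$ from item (4), which your argument does establish.
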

	We also recall the following commutator estimates for two pseudo-differential operators.
	\begin{Lemma}[\cite{Hormander-1985-book-3,Taylor-1991-book}]\label{lemma:commuta}
		Let $\mathcal{P}\in \mbox{OPS}^{p}_{\rho,\delta}$ and $\mathcal{T}\in \mbox{OPS}^{q}_{\rho,\delta}$ with $p,q\in \mathbb{R}$, $0 \leq \delta<\rho, \leq 1$ then 
		$$ [\mathcal{P},\mathcal{T}]\in \mbox{OPS}^{p+q-(\rho-\delta)}_{\rho,\delta}.$$
	\end{Lemma}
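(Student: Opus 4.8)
The plan is to prove the statement through the symbolic calculus for periodic pseudodifferential operators, following the classical Euclidean theory of \cite{Hormander-1985-book-3,Taylor-1991-book} (and its discrete counterpart). First I would denote by $a(x,k)\in S^{p}_{\rho,\delta}$ and $b(x,k)\in S^{q}_{\rho,\delta}$ the symbols of $\mathcal P$ and $\mathcal T$, so that by \eqref{def:pseudodiff}
$$
\mathcal P f(x)=\frac{1}{(2\pi)^d}\sum_{k\in\mathbb Z^d}a(x,k)\,{\rm e}^{{\rm i}x\cdot k}\,\widehat f(k),
$$
and analogously for $\mathcal T$. The key tool is the composition formula: the product $\mathcal P\mathcal T$ is again a pseudodifferential operator whose symbol admits the asymptotic expansion
$$
\sigma_{\mathcal P\mathcal T}(x,k)\sim\sum_{\alpha\in\mathbb N^d}\frac{1}{\alpha!}\,\big(\Delta^{\alpha}_{k}a(x,k)\big)\,\partial^{(\alpha)}_{x}b(x,k),
$$
where $\Delta^\alpha_k$ is the finite-difference operator from the definition of $S^m_{\rho,\delta}$ and $\partial^{(\alpha)}_x$ is the corresponding $x$-derivative; the symbol $\sim$ means that truncating at order $N$ leaves a remainder in $S^{p+q-(\rho-\delta)N}_{\rho,\delta}$.

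Next I would form the commutator by subtracting the analogous expansion for $\mathcal T\mathcal P$. The crucial observation is that the two zeroth-order terms ($\alpha=0$) both equal $a(x,k)\,b(x,k)$ and therefore cancel in $\sigma_{[\mathcal P,\mathcal T]}=\sigma_{\mathcal P\mathcal T}-\sigma_{\mathcal T\mathcal P}$. Consequently the commutator symbol only retains terms with $|\alpha|\ge 1$. For each such term I would use the defining estimates of the H\"ormander classes: applying $\Delta^\alpha_k$ lowers the order by $\rho|\alpha|$, so $\Delta^\alpha_k a\in S^{p-\rho|\alpha|}_{\rho,\delta}$, while $\partial^{(\alpha)}_x b$ raises it by $\delta|\alpha|$, i.e.\ $\partial^{(\alpha)}_x b\in S^{q+\delta|\alpha|}_{\rho,\delta}$. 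Hence each product lies in $S^{p+q-(\rho-\delta)|\alpha|}_{\rho,\delta}$, and since the strict inequality $\delta<\rho$ gives $\rho-\delta>0$ together with $|\alpha|\ge1$, every surviving term belongs to $S^{p+q-(\rho-\delta)}_{\rho,\delta}$. Combined with the remainder control, this yields $\sigma_{[\mathcal P,\mathcal T]}\in S^{p+q-(\rho-\delta)}_{\rho,\delta}$, that is, $[\mathcal P,\mathcal T]\in\mbox{OPS}^{p+q-(\rho-\delta)}_{\rho,\delta}$. Note that the principal part is the Poisson bracket $\tfrac{1}{{\rm i}}\{a,b\}$, which already exhibits the gain of one order.

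The main obstacle is establishing the composition formula and, in particular, rigorously justifying the asymptotic expansion together with its remainder estimate in the \emph{discrete} periodic framework, where the Euclidean $\partial_\xi^\alpha$ is replaced by $\Delta^\alpha_k$. Two points require care: first, the discrete Leibniz rule for $\Delta^\alpha_k$ differs from the smooth product rule, so one must verify it does not spoil the order counting above; second, one has to control the oscillatory sums defining the remainder uniformly in $k$, which is handled by summation by parts together with the rapid decay furnished by the symbol estimates. Since this is a standard result in the calculus of periodic pseudodifferential operators, I would ultimately invoke \cite{Hormander-1985-book-3,Taylor-1991-book} for the complete verification of these technical points.
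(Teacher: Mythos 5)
The paper offers no proof of this lemma: it is quoted as a classical fact with a citation to H\"ormander and Taylor, so there is nothing internal to compare against. Your sketch is precisely the standard symbolic-calculus argument from those references --- composition expansion, cancellation of the $\alpha=0$ terms in $\sigma_{\mathcal P\mathcal T}-\sigma_{\mathcal T\mathcal P}$, and the gain of $(\rho-\delta)\lvert\alpha\rvert$ with $\lvert\alpha\rvert\ge 1$ plus remainder control --- and it is correct in outline, with the periodic/discrete technicalities appropriately deferred to the cited sources.
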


	\begin{Lemma}[\cite{Tang-2020-Arxiv,Ren-Tang-Wang-2020-Arxiv}]\label{Te commutator} 
		Let $d\geq1$ and $f,g:\T^d\rightarrow\R^d$ such that $g\in W^{1,\infty}$ and $f\in L^2$. Then for some $C>0$,
		\begin{align*}
		\left\|\left[{\tilde J}_{\varepsilon}, (g\cdot \nabla)\right]f\right\|_{L^2}
		\leq C\|\nabla g\|_{L^\infty}\|f\|_{L^2}.
		\end{align*}
	\end{Lemma}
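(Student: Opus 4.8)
The plan is to exploit the fact that ${\tilde J}_\e=(1-\e^2\Delta)^{-1}$ is the resolvent of $A:=-\Delta$, so that its commutator with $B:=g\cdot\nabla$ can be converted into a weighted double commutator carrying a favourable power of $\e$. Using the elementary resolvent identity $[S^{-1},B]=S^{-1}[B,S]S^{-1}$ with $S:=1+\e^2A$, one obtains
\begin{equation*}
[{\tilde J}_\e,g\cdot\nabla]=\e^2\,{\tilde J}_\e\,[g\cdot\nabla,-\Delta]\,{\tilde J}_\e .
\end{equation*}
First I would compute the inner double commutator. A direct calculation with the Leibniz rule for the Laplacian gives, with summation over repeated indices,
\begin{equation*}
[g\cdot\nabla,-\Delta]f=(\Delta g_j)\partial_j f+2(\partial_i g_j)\partial_i\partial_j f .
\end{equation*}
Since only $\nabla g\in L^\infty$ is available, the term with $\Delta g_j$ must be recast; integrating by parts at the operator level through $(\Delta g_j)\partial_j=\partial_i\circ M_{\partial_i g_j}\circ\partial_j-M_{\partial_i g_j}\partial_i\partial_j$ (where $M_\phi$ denotes multiplication by $\phi$) yields the key rewriting
\begin{equation*}
[g\cdot\nabla,-\Delta]=M_{\partial_i g_j}\,\partial_i\partial_j+\partial_i\, M_{\partial_i g_j}\,\partial_j ,
\end{equation*}
in which now only the first derivatives $\partial_i g_j$ of $g$ appear.

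Next I would distribute the weight $\e^2$ onto the two resolvents so as to form Fourier multipliers bounded on $L^2$ uniformly in $\e$. Concretely, using that ${\tilde J}_\e$ and $\partial_i$ commute, I would write
\begin{equation*}
[{\tilde J}_\e,g\cdot\nabla]={\tilde J}_\e\,M_{\partial_i g_j}\,\bigl(\e^2\partial_i\partial_j{\tilde J}_\e\bigr)+\bigl(\e\partial_i{\tilde J}_\e\bigr)\,M_{\partial_i g_j}\,\bigl(\e\partial_j{\tilde J}_\e\bigr).
\end{equation*}
Each multiplier factor is then estimated on the Fourier side with $t:=\e|k|$: one has $\|{\tilde J}_\e\|_{L^2\to L^2}\le1$, $\|\e^2\partial_i\partial_j{\tilde J}_\e\|_{L^2\to L^2}\le\sup_{t\ge0}t^2/(1+t^2)\le1$, and $\|\e\partial_j{\tilde J}_\e\|_{L^2\to L^2}\le\sup_{t\ge0}t/(1+t^2)\le\tfrac12$, while the multiplication operators satisfy $\|M_{\partial_i g_j}\|_{L^2\to L^2}=\|\partial_i g_j\|_{L^\infty}\le\|\nabla g\|_{L^\infty}$. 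Chaining these bounds over the finitely many indices $i,j$ gives $\|[{\tilde J}_\e,g\cdot\nabla]f\|_{L^2}\le C_d\|\nabla g\|_{L^\infty}\|f\|_{L^2}$ with $C_d$ depending only on the dimension, which is the claim.

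The main obstacle is that these algebraic manipulations are only formally valid, since $g\cdot\nabla$ with $g\in W^{1,\infty}$ is unbounded and the rewriting of $[g\cdot\nabla,-\Delta]$ produces the distribution-valued piece $\partial_i M_{\partial_i g_j}\partial_j$. I would make everything rigorous by a density/regularization argument: first establish the identity and all the operator bounds for $g\in C^\infty$, where every operator is classical and the commutator algebra is fully justified, observing that the resulting constant depends only on $\|\nabla g\|_{L^\infty}$ and $d$; then approximate a general $g\in W^{1,\infty}$ by smooth $g_m$ with $\|\nabla g_m\|_{L^\infty}\le\|\nabla g\|_{L^\infty}$ and $g_m\to g$ uniformly, passing to the limit in the uniform bound first on $f\in C^\infty$ and then on all $f\in L^2$ by density together with ${\tilde J}_\e\colon L^2\to H^2$. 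An alternative, lower-tech route that handles the regularity of $g$ directly is to write ${\tilde J}_\e$ as convolution against its periodic kernel $K_\e$, express the commutator as $\sum_j\int\partial_{x_j}K_\e(x-y)\,[g_j(y)-g_j(x)]f(y)\,dy-{\tilde J}_\e((\nabla\!\cdot g)f)$, bound $|g_j(y)-g_j(x)|\le\|\nabla g\|_{L^\infty}|x-y|$, and invoke Schur's test; there the crux becomes the scale-invariant estimate $\sup_\e\int|x|\,|\nabla K_\e(x)|\,dx<\infty$, obtained from the Bessel-potential scaling $K_\e(x)=\e^{-d}G(x/\e)$ after controlling the periodization tail.
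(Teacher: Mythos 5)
Your proof is correct. Note first that the paper itself gives no argument for this lemma: it is stated with a citation to \cite{Tang-2020-Arxiv,Ren-Tang-Wang-2020-Arxiv}, so there is no in-paper proof to compare against; your proposal supplies a self-contained argument. The chain of identities checks out: the resolvent identity $[S^{-1},B]=S^{-1}[B,S]S^{-1}$ with $S=1-\e^2\Delta$ correctly yields $[{\tilde J}_\e,g\cdot\nabla]=\e^2{\tilde J}_\e[g\cdot\nabla,-\Delta]{\tilde J}_\e$; the Leibniz computation $[g\cdot\nabla,-\Delta]=(\Delta g_j)\partial_j+2(\partial_ig_j)\partial_i\partial_j$ and its rewriting as $M_{\partial_ig_j}\partial_i\partial_j+\partial_i M_{\partial_ig_j}\partial_j$ are both verified by direct expansion; and the redistribution of $\e^2$ onto the Fourier multipliers ${\tilde J}_\e$, $\e\partial_j{\tilde J}_\e$, $\e^2\partial_i\partial_j{\tilde J}_\e$ (with symbols bounded on $\mathbb{Z}^d$ by $1$, $\tfrac12$, $1$ respectively) gives the claimed uniform-in-$\e$ bound with only $\|\nabla g\|_{L^\infty}$ appearing. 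You are also right to flag, and to resolve by smoothing $g$ and using ${\tilde J}_\e:H^{-1}\to H^1$, the fact that the intermediate expressions are only distributionally defined for $g\in W^{1,\infty}$, $f\in L^2$; indeed $g\cdot\nabla f=\nabla\cdot(gf)-(\nabla\cdot g)f\in H^{-1}$ so both terms of the commutator are genuinely in $L^2$ and the limiting argument closes. Your alternative kernel route is the classical Friedrichs-lemma strategy and would also work, at the cost of controlling the periodized Bessel kernel; the resolvent-identity route you chose is cleaner on the torus because every factor is an explicit Fourier multiplier.
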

	
	Now we recall some useful estimates.
	
	\begin{Lemma}[\cite{Kato-Ponce-1988-CPAM,Kenig-Ponce-Vega-1991-JAMS}]\label{KP  commutator estimate}
		If $f,g\in H^s\bigcap W^{1,\infty}$ with $s>0$, then for $p,p_i\in(1,\infty)$ with $i=2,3$ and
		$\frac{1}{p}=\frac{1}{p_1}+\frac{1}{p_2}=\frac{1}{p_3}+\frac{1}{p_4}$, we have
		$$
		\|\left[D^s,f\right]g\|_{L^p}\leq C(\|\nabla f\|_{L^{p_1}}\|D^{s-1}g\|_{L^{p_2}}+\|D^sf\|_{L^{p_3}}\|g\|_{L^{p_4}}),$$
		and
		$$\|D^s(fg)\|_{L^p}\leq C(\|f\|_{L^{p_1}}\|D^s g\|_{L^{p_2}}+\|D^s f\|_{L^{p_3}}\|g\|_{L^{p_4}}).$$
	\end{Lemma}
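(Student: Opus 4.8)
The plan is to prove both inequalities---the fractional Leibniz rule for $D^s(fg)$ and the commutator bound for $[D^s,f]g$---by a Littlewood--Paley (paraproduct) analysis, which is the natural tool here because $D^s=(I-\Delta)^{s/2}$ is a smooth Fourier multiplier and the restriction $p_i\in(1,\infty)$ is precisely what makes the square-function characterization of $L^p$ available on $\T^d$. First I would fix a dyadic resolution of the identity on the frequency lattice $\mathbb Z^d$, with projections $P_j$ localizing to $|\xi|\sim 2^j$ and low-frequency cut-offs $S_j=\sum_{j'<j}P_{j'}$, and split the product by Bony's formula
$$fg=T_fg+T_gf+R(f,g),\qquad T_fg=\sum_j (S_{j-1}f)\,(P_jg),\quad R(f,g)=\sum_{|j-k|\le 1}(P_jf)(P_kg),$$
where $T_fg$ is the low--high paraproduct and $R(f,g)$ the resonant part.

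For the product estimate I would apply $D^s$ to each piece. In $T_fg$ the high frequency sits on $P_jg$, so each summand is frequency-localized at $2^j$ and $\|D^s(S_{j-1}f\,P_jg)\|$ behaves like $2^{js}|S_{j-1}f|\,|P_jg|$; bounding $|S_{j-1}f|\lesssim Mf$ by the Hardy--Littlewood maximal function and invoking the Fefferman--Stein vector-valued maximal inequality together with H\"{o}lder in the exponents $1/p=1/p_1+1/p_2$ yields $\|D^sT_fg\|_{L^p}\lesssim\|f\|_{L^{p_1}}\|D^sg\|_{L^{p_2}}$. The symmetric paraproduct $T_gf$ produces the companion bound $\|D^sf\|_{L^{p_3}}\|g\|_{L^{p_4}}$, and the resonant term $R(f,g)$, having comparable frequencies, can be distributed to either side; reassembling through the Littlewood--Paley square function gives the Leibniz inequality.

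For the commutator I would write $[D^s,f]g=D^s(fg)-fD^sg$ and concentrate on the low--high interaction, the only contribution lacking an a priori derivative gain. On a summand $S_{j-1}f\,P_jg$ the multiplier producing $D^s(fg)$ acts on frequencies $\xi+\eta$ while $fD^sg$ carries $\langle\eta\rangle^s$; since $|\xi|\ll|\eta|$ there, a first-order Taylor expansion of $\langle\xi+\eta\rangle^s$ about $\eta$ shows the leading terms cancel and the remainder carries a factor $\sim|\xi|\,\langle\eta\rangle^{s-1}$, i.e.\ one derivative transferred from $g$ onto $f$. This turns the dangerous paraproduct into the controllable $\|\nabla f\|_{L^{p_1}}\|D^{s-1}g\|_{L^{p_2}}$, while the high--low and resonant parts are already of lower order and give $\|D^sf\|_{L^{p_3}}\|g\|_{L^{p_4}}$.

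The step I expect to be the main obstacle is making this symbol cancellation in the commutator fully rigorous and uniform in the dyadic scale: one must show that the remainder, after the Taylor expansion, is a bounded bilinear operator on the relevant $L^p$ spaces. The cleanest route is to recognize $D^s(fg)$, $fD^sg$ and their difference as bilinear Fourier multipliers and to verify the Coifman--Meyer / H\"{o}rmander--Mikhlin derivative conditions on the symbols away from the frequency diagonal, from which $L^{p_1}\times L^{p_2}\to L^p$ boundedness follows. A minor additional point particular to the periodic setting is transferring these multiplier bounds from $\R^d$ to $\T^d$, which is standard (periodization/transference) since all symbols involved are smooth and homogeneous of the stated orders.
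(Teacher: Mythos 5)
The paper does not prove this lemma: it is quoted verbatim from the cited references \cite{Kato-Ponce-1988-CPAM,Kenig-Ponce-Vega-1991-JAMS} and used as a black box, so there is no in-paper argument to compare against. Judged on its own, your outline is the standard modern proof and is sound: Bony decomposition, Fefferman--Stein vector-valued maximal inequality for the paraproducts, mean-value/Taylor cancellation of the symbol $\langle\xi+\eta\rangle^s-\langle\eta\rangle^s\lesssim|\xi|\langle\eta\rangle^{s-1}$ on the low--high piece of the commutator, Coifman--Meyer for the remainders, and transference to $\T^d$. This is essentially a streamlined Littlewood--Paley rewriting of what Kato--Ponce and the appendix of Kenig--Ponce--Vega actually do (they work directly with Coifman--Meyer multilinear multiplier theorems and interpolation rather than an explicit dyadic decomposition), so the two routes are the same circle of ideas packaged differently.

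Two points in your sketch deserve more care than you give them. First, after subtracting $fD^sg=T_f(D^sg)+T_{D^sg}f+R(f,D^sg)$, the pieces $T_{D^sg}f$ and $R(f,D^sg)$ are not automatically ``of lower order'': to land them on the right-hand side you need Bernstein-type exchanges such as $\|S_{j-1}D^sg\|\lesssim 2^{j}\|D^{s-1}g\|$ and $2^{j}\|P_jf\|\sim\|P_j\nabla f\|$, which place them in the $\|\nabla f\|_{L^{p_1}}\|D^{s-1}g\|_{L^{p_2}}$ term rather than the $\|D^sf\|_{L^{p_3}}\|g\|_{L^{p_4}}$ term you assign them to; this is a bookkeeping issue, not a gap, since both terms are available. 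Second, the resonant piece $R(f,g)$ is frequency-localized in a ball rather than an annulus, so its reassembly after applying $D^s$ genuinely uses $s>0$ to sum the dyadic scales; it is worth saying so explicitly, as this is where the hypothesis $s>0$ enters. Neither point invalidates the plan.
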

	
	\begin{Lemma} \label{Liecancellations}
		Let $s>\frac{d}{2}+1$,  $f\in H^{s+2}$ be a scalar function,  $\xi_k$ be a $d$-$D$ vector and $\mathcal{P}\in \mbox{OPS}^{s}_{1,0}$.  Define
		$$\mathcal{L}_{\xi_k}f=\xi_k\cdot\nabla f+({\rm div}\xi_k)f.$$ 
		If	Assumption \ref{Assum-xi} holds, then we have
		\begin{equation}\label{Liecancellation-target}
		\left( \mathcal{P} \sum_{k=1}^{\infty}\mathcal{L}^2_{\xi_k} f, \mathcal{P} f \right)_{L^2} 
		+  
		\sum_{k=1}^{\infty}\left (\mathcal{P}  \mathcal{L}_{\xi_k} f, \mathcal{P}   \mathcal{L}_{\xi_k} f \right) _{L^2} \lesssim\, \|f\|_{H^s}^2. 
		\end{equation}
	\end{Lemma}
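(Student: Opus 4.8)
The plan is to exploit the fact that, although $\mathcal{L}_{\xi_k}$ is a genuine first‑order operator, it is \emph{skew‑adjoint up to order zero}. Indeed, from the definition $\mathcal{L}_{\xi_k}f=\xi_k\cdot\nabla f+(\mathrm{div}\,\xi_k)f=\mathrm{div}(\xi_k f)$, one computes the $L^2$‑adjoint $\mathcal{L}_{\xi_k}^{*}=-\xi_k\cdot\nabla$, so that $\mathcal{L}_{\xi_k}+\mathcal{L}_{\xi_k}^{*}=\mathrm{div}\,\xi_k=:m_k$ is merely a multiplication (order‑zero) operator. I would fix $k$, abbreviate $\mathcal{L}=\mathcal{L}_{\xi_k}$, $v=\mathcal{P}f$, $w=\mathcal{C}_k f$, and introduce the single and double commutators $\mathcal{C}_k=[\mathcal{P},\mathcal{L}]$ and $\mathcal{D}_k=[\mathcal{C}_k,\mathcal{L}]$. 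The decisive structural input is Lemma \ref{lemma:commuta}: since $\mathcal{P}\in\mbox{OPS}^{s}_{1,0}$ and $\mathcal{L}\in\mbox{OPS}^{1}_{1,0}$ (its symbol depends linearly on $\xi_k$ and is smooth because Assumption \ref{Assum-xi} forces $\xi_k\in C^\infty$), each bracket with the first‑order operator $\mathcal{L}$ \emph{lowers} the order by one, giving $\mathcal{C}_k,\mathcal{D}_k\in\mbox{OPS}^{s}_{1,0}$. This one‑order gain is precisely what will prevent a fatal loss of a derivative.

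Next I would carry out the algebraic cancellation. Commuting $\mathcal{P}$ through the two copies of $\mathcal{L}$ gives $\mathcal{P}\mathcal{L}f=\mathcal{L}v+w$ and $\mathcal{P}\mathcal{L}^2 f=\mathcal{L}^2 v+2\mathcal{L}w+\mathcal{D}_k f$. Using $\mathcal{L}^{*}=-\mathcal{L}+m_k$ one finds $(\mathcal{L}^2 v,v)_{L^2}=-\norm{\mathcal{L}v}_{L^2}^2+(\mathcal{L}v,m_k v)_{L^2}$, while $\norm{\mathcal{P}\mathcal{L}f}_{L^2}^2=\norm{\mathcal{L}v}_{L^2}^2+2(\mathcal{L}v,w)_{L^2}+\norm{w}_{L^2}^2$. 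Adding the two contributions in the single‑$k$ quantity $A_k:=(\mathcal{P}\mathcal{L}^2 f,\mathcal{P}f)_{L^2}+\norm{\mathcal{P}\mathcal{L}f}_{L^2}^2$, the two top‑order terms $\pm\norm{\mathcal{L}v}_{L^2}^2$ cancel exactly, leaving
\begin{equation*}
A_k=(\mathcal{L}v,m_k v)_{L^2}+2(\mathcal{L}w,v)_{L^2}+2(\mathcal{L}v,w)_{L^2}+(\mathcal{D}_k f,v)_{L^2}+\norm{w}_{L^2}^2.
\end{equation*}
This cancellation is the heart of the lemma.

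It then remains to bound the five surviving terms without derivative loss. For the two cross terms I would invoke the integration‑by‑parts identity $(\mathcal{L}a,b)_{L^2}+(\mathcal{L}b,a)_{L^2}=(m_k a,b)_{L^2}$ (valid since $-\int\xi_k\cdot\nabla(ab)=\int(\mathrm{div}\,\xi_k)ab$), which turns $2(\mathcal{L}w,v)+2(\mathcal{L}v,w)$ into the order‑zero pairing $2(m_k v,w)_{L^2}$; likewise $(\mathcal{L}v,m_k v)_{L^2}$ is reduced by integration by parts to a bound by $\norm{\xi_k}_{W^{1,\infty}}\norm{m_k}_{W^{1,\infty}}\norm{v}_{L^2}^2$. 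Every remaining pairing is now of the schematic form (an order‑$s$ operator applied to $f$) tested against (an order‑$s$ operator applied to $f$): using $\norm{v}_{L^2}=\norm{\mathcal{P}f}_{L^2}\lesssim\norm{f}_{H^s}$, $\norm{w}_{L^2}=\norm{\mathcal{C}_k f}_{L^2}\lesssim\norm{f}_{H^s}$, and $\norm{\mathcal{D}_k f}_{L^2}\lesssim\norm{f}_{H^s}$, I obtain $A_k\lesssim C_k\norm{f}_{H^s}^2$.

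The final step, and the place where the main technical obstacle sits, is the summation over $k$. The estimate is only useful if the constants $C_k$—which involve $\norm{\xi_k}_{W^{1,\infty}}$, $\norm{m_k}_{W^{1,\infty}}$ and the $\mbox{OPS}^{s}_{1,0}$‑seminorms (hence $H^s\to L^2$ operator bounds) of $\mathcal{C}_k$ and $\mathcal{D}_k$—form a summable sequence. Here Lemma \ref{lemma:commuta} is only qualitative, so the delicate point is to track that the symbol seminorms of $\mathcal{C}_k=[\mathcal{P},\mathcal{L}_{\xi_k}]$ and $\mathcal{D}_k=[[\mathcal{P},\mathcal{L}_{\xi_k}],\mathcal{L}_{\xi_k}]$ depend on $\xi_k$ only through finitely many of its derivatives in $L^\infty$; since the symbol of $\mathcal{L}_{\xi_k}$ is linear in $\xi_k$ and its derivatives, these are dominated by $\norm{\xi_k}_{H^{\sigma}}$ for some fixed $\sigma=\sigma(s,d)$ via Sobolev embedding. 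Then $C_k\lesssim\norm{\xi_k}_{H^\sigma}^2$, and Assumption \ref{Assum-xi} (together with Remark \ref{Remark xi with f}, which also legitimizes the term‑by‑term manipulation of $\sum_k\mathcal{L}_{\xi_k}^2 f$ for $f\in H^{s+2}$) gives $\sum_k C_k<\infty$, yielding $\sum_k A_k\lesssim\norm{f}_{H^s}^2$, which is exactly \eqref{Liecancellation-target}. I expect this quantitative control of the commutator constants by a fixed Sobolev norm of $\xi_k$ to be the most delicate part, whereas the two cancellations (the top‑order $\norm{\mathcal{L}v}_{L^2}^2$ cancellation and the skew‑symmetric integration by parts) are the conceptual core.
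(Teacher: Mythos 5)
Your proposal is correct and follows essentially the same route as the paper: your identity for $A_k$ (with $\mathcal{C}_k=[\mathcal{P},\mathcal{L}_{\xi_k}]$ and $\mathcal{D}_k=[\mathcal{C}_k,\mathcal{L}_{\xi_k}]$) is, after the integrations by parts you describe, exactly the paper's decomposition into the six terms $I_1,\dots,I_6$ built from $R_1=[\mathcal{P},\mathcal{L}_{\xi_k}]$, $R_2=[R_1,\mathcal{L}_{\xi_k}]$ and $E=\operatorname{div}\xi_k$, with the same top-order cancellation coming from the skew-adjointness of $\mathcal{L}_{\xi_k}$ modulo multiplication by $\operatorname{div}\xi_k$. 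The ``delicate point'' you flag at the end is resolved in the paper not by tracking abstract symbol seminorms but by splitting each commutator as $[\mathcal{P},\xi_k\cdot]\nabla+\xi_k\cdot[\mathcal{P},\nabla]+[\mathcal{P},\operatorname{div}\xi_k]$ and applying the explicit estimates \eqref{commutator:Taylor s>0}--\eqref{commutator:Taylor 1}, which yields $C_k\lesssim \norm{\xi_k}_{H^{s+1}}^2+\norm{\xi_k}_{H^{s+1}}$ and hence summability under Assumption \ref{Assum-xi}.
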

	\begin{proof}
		The essential part of the desired estimate lies in the following result in \cite{Alonso-Bethencourt-2020-JNS}:
		Let $\mathcal{Q}$ be a first-order linear operator with smooth coefficients and $\mathcal{P}\in \mbox{OPS}^{s}_{1,0}$. Then $f\in H^{s}$ with $s>\frac{d}{2}+1$ we have that
		\begin{equation*} 
		\left( \mathcal{P}\mathcal{Q}^{2}f, \mathcal{P} f \right)_{L^2} +  \left (\mathcal{P} \mathcal{Q} f, \mathcal{P} \mathcal{Q} f \right) _{L^2} \lesssim\,  \norm{f}_{H^s}^2. 
		\end{equation*}
		In particular, if we choose $\mathcal{Q}=\mathcal{L}_{\xi_{k}}$ we have that:
		\begin{equation}\label{eq:cancellation:Lie}
		\left( \mathcal{P}\mathcal{L}_{\xi_{k}}^{2}f, \mathcal{P} f \right)_{L^2} +  \left (\mathcal{P} \mathcal{L}_{\xi_{k}} f, \mathcal{P} \mathcal{L}_{\xi_{k}} f \right) _{L^2} \lesssim\,  \norm{f}_{H^s}^2. 
		\end{equation}
		Since we want to calculate this estimate for $\sum_{k=1}^{\infty}\mathcal{L}^2_{\xi_k} $, we need to precise the constant of the right hand side of
		\eqref{eq:cancellation:Lie}. To this end, mimicking the proof of \cite{Alonso-Bethencourt-2020-JNS} we can rewrite the left hand side of \eqref{eq:cancellation:Lie} as
		\begin{align*}
		\left( \mathcal{P}\mathcal{L}_{\xi_{k}}^{2}f, \mathcal{P} f \right)_{L^2} +  \left (\mathcal{P} \mathcal{L}_{\xi_{k}} f, \mathcal{P} \mathcal{L}_{\xi_{k}} f \right) _{L^2} =\,& \left( R_{2}f, \mathcal{P} f \right)_{L^2} + \left( R_{1}f, R_{1} f \right)_{L^2} + \left(\mathcal{P} f, ER_{1}f \right)_{L^2} \\
		&-\frac{1}{2}\left( \mathcal{P}f, R_{0} \mathcal{P}f \right)_{L^2}+\frac{1}{2}\left( \mathcal{P}f, E^{2} \mathcal{P}f \right)_{L^2} +\left( R_{1}f, E \mathcal{P}f \right)_{L^2} \\
		=:\,& \displaystyle\sum_{i=1}^{6} I_{i},
		\end{align*}
		where $E={\rm div}\xi_{k} \in\mbox{OPS}^{0}_{1,0}$, $R_{0}=[\mathcal{L}_{\xi_{k}},E]\in\mbox{OPS}^{1}_{1,0}$, $R_{1}=[\mathcal{P},\mathcal{L}_{\xi_{k}}]$ and $R_{2}=[R_{1},\mathcal{L}_{\xi_{k}}]$.  	By Lemma \ref{lemma:commuta}, we have $$R_{1},\ R_2,\  [R_{1},\partial_{x}]\in \mbox{OPS}^{s}_{1,0}.$$

		To derive \eqref{Liecancellation-target} we will invoke the following commutator estimates (see \cite[ (3.6.1) and (3.6.2)]{Taylor-1991-book}):
		\begin{itemize}
			\item If  $P\in \mbox{OPS}^{s}_{1,0}, s>0$, then there is a $C>0$ such that
			\begin{equation}\label{commutator:Taylor s>0}
			\norm{P(gu)-gPu}_{L^{2}} \leq C \left(\norm{g}_{W^{1,\infty}}\norm{u}_{H^{s-1}}+\norm{g}_{H^{s}}\norm{u}_{L^{\infty}} \right).
			\end{equation}
			
			\item If  $P\in \mbox{OPS}^{1}_{1,0}$, then there is a $C>0$ such that
			\begin{equation}\label{commutator:Taylor 1}
			\norm{P(gu)-gPu}_{L^{2}} \leq C \norm{g}_{W^{1,\infty}}\norm{u}_{H^{s-1}}.
			\end{equation}
		\end{itemize}
		
		For $I_1$, we have that
		\begin{align*}
		\abs{I_{1}} \leq \norm{R_{2}f}_{L^{2}}\norm{Pf}_{L^{2}}
		\leq & \norm{[R_{1},\mathcal{L}_{\xi_{k}}]f}_{L^2}\norm{f}_{H^s} \\
		=\,& \left(\norm{[R_{1},\xi_{k}\cdot\nabla]f}_{L^{2}}+\norm{[R_{1},{\rm div}\xi_{k}]f}_{L^2}\right)\norm{f}_{H^s} \\
		=\,&  \left(\norm{[R_{1},\xi_{k}\cdot]\nabla f}_{L^{2}}+\norm{\xi_{k}\cdot[R_{1},\nabla]f}_{L^{2}} +\norm{[R_{1},{\rm div}\xi_{k}]f}_{L^2}\right)\norm{f}_{H^s} \\
		=\,& \left(I_{1,1}+I_{1,2}+I_{1,3}\right)\norm{f}_{H^s}
		\end{align*}
		Applying \eqref{commutator:Taylor s>0} with $P=R_{1}$, $g=\xi_{k}$, $u=\nabla f$, and using $H^s\hookrightarrow W^{1,\infty}$, we arrive at
		$$ \abs{I_{1,1}}\leq \norm{\xi_{k}}_{W^{1,\infty}}\norm{\nabla f}_{H^{s-1}}+ \norm{\xi_{k}}_{H^{s}}\norm{\nabla f}_{L^{\infty}} \leq \norm{\xi_{k}}_{H^{s}}\norm{f}_{H^s}.$$
		For the second term, we have
		$$\abs{I_{1,2}}=\norm{\xi_{k}\cdot[R_{1},\nabla]f}_{L^{2}}\leq \norm{\xi_{k}}_{L^{\infty}}\norm{[R_{1},\nabla]f}_{L^{2}} \leq \norm{\xi_{k}}_{H^{s}} \norm{f}_{H^s}.$$
		Applying \eqref{commutator:Taylor s>0} with $P=R_{1}$, $g={\rm div}\xi_{k}$ and $u=f$  yields
		$$ \abs{I_{1,3}}\leq \norm{{\rm div}\xi_{k}}_{W^{1,\infty}}\norm{f}_{H^{s-1}}+ \norm{{\rm div}\xi_{k}}_{H^{s}}\norm{f}_{L^{\infty}} \leq \norm{\xi_{k}}_{H^{s+1}}\norm{f}_{H^s}.$$
		Hence, we have show that 
		$$\abs{I_{1}}\leq C \norm{\xi_{k}}_{H^{s+1}}\norm{f}^{2}_{H^{s}}.$$
		Repeat the above procedure as we estimate $\|R_2f\|_{L^2}=\norm{[R_{1},\LL_{\xi_{k}}]f}_{L^{2}}$ with replacing $R_1$ by $\mathcal{P}$, we have
		\begin{align*}
		\abs{I_{2}} \leq \norm{R_{1}f}^{2}_{L^{2}}\leq  \norm{[\mathcal{P},\mathcal{L}_{\xi_{k}}]f}^{2}_{L^2}=\,& \left(\norm{[\mathcal{P},\xi_{k}\cdot\nabla]f}_{L^{2}}+\norm{[\mathcal{P},{\rm div}{\xi_{k}}]f}_{L^2}\right)^{2} \\
		=\,&  \left(\norm{[\mathcal{P},\xi_{k}\cdot]\nabla f}_{L^{2}}+\norm{\xi_{k}\cdot[\mathcal{P},\nabla]f}_{L^{2}} +\norm{[\mathcal{P},{\rm div}\xi_{k}]f}_{L^2}\right)^{2} \\
		\leq & \norm{\xi_{k}}^{2}_{H^{s+1}}\norm{f}^{2}_{H^s},
		\end{align*}
		For the third term, using the Cauchy-Schwarz inequality and the fact that $E={\rm div}\xi_{k}\in\mbox{OPS}^{1}_{1,0}$ gives rise to
		\[ \abs{I_{3}} = \left(\mathcal{P} f, ER_{1}f \right)_{L^2} \leq \norm{\mathcal{P} f}_{L^2}\norm{{\rm div}\xi_{k}R_{1}f}_{L^2} \leq \norm{{\rm div}\xi_{k}}_{L^{\infty}} \norm{f}^{2}_{H^s}. \] 
		Similarly, 
		\begin{align*}
		\abs{I_{5}+I_{6}} =\,& \left|\frac{1}{2}\left(\mathcal{P} f, E^2 \mathcal{P}  f \right)_{L^2} + \left(R_{1}f, E\mathcal{P} f \right)_{L^2} \right|\\
		\leq & C\left(\norm{{\rm div}\xi_{k}}^{2}_{L^{\infty}} \norm{\mathcal{P}f}_{L^2}^{2}+\norm{R_{1}f}_{L^2}\norm{{\rm div}\xi_{k}}_{L^{\infty}}\norm{\mathcal{P} f }_{L^2}\right) \\
		\leq & C\left( \norm{{\rm div}\xi_{k}}_{L^{\infty}}+ \norm{{\rm div}\xi_{k}}^2_{L^{\infty}}\right)\norm{f}^{2}_{H^2}.
		\end{align*} 
		For $I_4$, we notice that $\mathcal{L}_{\xi_{k}}\in\mbox{OPS}^{1}_{1,0}.$ Hence it follows from \eqref{commutator:Taylor 1} with $P=\mathcal{L}_{\xi_{k}}$, $g={\rm div}\xi_{k}$ and $u=\mathcal{P}f$ that
		$$ \abs{I_{4}}\leq C\norm{\mathcal{P}f}_{L^2}\norm{[\LL_{\xi_k},{\rm div}\xi_{k}]\mathcal{P}f}_{L^2} \leq C \norm{f}_{H^s} \norm{{\rm div}\xi_{k}}_{W^{1,\infty}} \norm{f}_{H^{s-1}} \leq C  \norm{\xi_{k}}_{H^{s+1}}\norm{f}^{2}_{H^{s}}.$$ 
		Gathering all the above estimates implies that for some $C>0$,
		\begin{equation*}\label{eq:cancellation:Lie:final}
		\left( \mathcal{P}\mathcal{L}_{\xi_{k}}^{2}f, \mathcal{P} f \right)_{L^2} +  \left (\mathcal{P} \mathcal{L}_{\xi_{k}} f, \mathcal{P} \mathcal{L}_{\xi_{k}} f \right) _{L^2} \leq C\left( \norm{\xi_{k}}^{2}_{H^{s+1}}+\norm{\xi_{k}}_{H^{s+1}} \right)\norm{f}_{H^s}^2. 
		\end{equation*}
		Using Assumption \ref{Assum-xi} to the above estimates, we obtain  \eqref{Liecancellation-target}.
	\end{proof}

	We conclude this appendix with some useful tools in stochastic analysis.
	
	\begin{Lemma}[Prokhorov Theorem, \cite{Prato-Zabczyk-2014-Cambridge}]\label{Prokhorov Theorem}
		Let $\mathbb{X}$ be a complete and separable metric space.  A sequence of measures $\{\mu_n\}\subset\Pm{}(\mathbb{X})$ is tight if and only if it is relatively compact, i.e.,  there is a subsequence $\{\mu_{n_k}\}$  converging to a probability measure $\mu$ weakly.
	\end{Lemma}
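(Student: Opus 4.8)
The plan is to treat the two implications separately, since they rely on different features of the Polish space $\mathbb{X}$: the direction ``tightness $\Rightarrow$ relative compactness'' (the one actually invoked in Lemmas \ref{Tightness} and \ref{convergence}) uses a compactification argument, whereas the converse ``relative compactness $\Rightarrow$ tightness'' is where completeness of $\mathbb{X}$ enters decisively. In both directions the bridge between convergence and set-measures is the portmanteau theorem, which I will take as known.

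For the forward direction I would first embed $\mathbb{X}$ homeomorphically into the Hilbert cube $[0,1]^{\mathbb{N}}$ and let $\overline{\mathbb{X}}$ denote the closure of its image, a compact metric space. Pushing the $\mu_n$ forward, I regard them as Borel probability measures on $\overline{\mathbb{X}}$. Since $\overline{\mathbb{X}}$ is compact metric, $C(\overline{\mathbb{X}})$ is separable, so the closed unit ball of $\mathcal{M}(\overline{\mathbb{X}}) = C(\overline{\mathbb{X}})^*$ is weak-$*$ sequentially compact (Banach--Alaoglu together with metrizability of the ball in the weak-$*$ topology). This produces a subsequence $\mu_{n_k}$ converging weak-$*$ to some nonnegative $\mu$ on $\overline{\mathbb{X}}$ with $\mu(\overline{\mathbb{X}}) = \lim_k \mu_{n_k}(\overline{\mathbb{X}}) = 1$ (test against the constant function $1$). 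The crux is to show $\mu$ lives on $\mathbb{X}$: given the tightness compacts $K_j \subset \mathbb{X}$ with $\sup_n \mu_n(\mathbb{X} \setminus K_j) < 1/j$, each $K_j$ is compact, hence closed in $\overline{\mathbb{X}}$, so the portmanteau inequality gives $\mu(K_j) \geq \limsup_k \mu_{n_k}(K_j) \geq 1 - 1/j$, whence $\mu(\mathbb{X}) = 1$. Restricting $\mu$ to $\mathbb{X}$ yields a probability measure, and I would finally transfer the convergence: for $f \in C_b(\mathbb{X})$ one splits $\int f \dd\mu_{n_k}$ over $K_j$ and its complement, using uniform smallness on the complement (from tightness) and approximating $f|_{K_j}$ by an element of $C(\overline{\mathbb{X}})$, so that $\int_{\mathbb{X}} f \dd\mu_{n_k} \to \int_{\mathbb{X}} f \dd\mu$.

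For the converse I would argue by contradiction using total boundedness. Assume $\{\mu_n\}$ is relatively compact but not tight, and fix $\varepsilon > 0$. I claim that for each $\delta = 1/m$ there is a finite union $U_m$ of $\delta$-balls centred at points of a countable dense set with $\inf_n \mu_n(U_m) \geq 1 - \varepsilon 2^{-m}$. Indeed, if this failed for some $m$, then along a subsequence $\mu_n$ of every finite union $\bigcup_{i=1}^N B(x_i,\delta)$ would stay $\leq 1 - \varepsilon 2^{-m}$; relative compactness furnishes a further weakly convergent subsequence with limit $\mu$, but since the balls cover $\mathbb{X}$ we have $\mu\big(\bigcup_{i=1}^N B(x_i,\delta)\big) \to 1$ as $N \to \infty$, so for large $N$ the portmanteau lower bound $\liminf_k \mu_{n_k}\big(\bigcup_{i=1}^N B(x_i,\delta)\big) \geq \mu\big(\bigcup_{i=1}^N B(x_i,\delta)\big)$ on this open set is violated. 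Intersecting, the set $K = \bigcap_m \overline{U_m}$ is closed and totally bounded, hence compact by completeness of $\mathbb{X}$, and satisfies $\inf_n \mu_n(K) \geq 1 - \varepsilon$, contradicting non-tightness.

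The main obstacle I anticipate is the forward direction's final transfer step: weak-$*$ convergence on $\overline{\mathbb{X}}$ only controls integrals of functions in $C(\overline{\mathbb{X}})$, and a generic $f \in C_b(\mathbb{X})$ need not extend continuously to the compactification, so the tightness estimate must be used quantitatively to localise onto the compacts $K_j$ before passing to the limit. One can sidestep this by instead establishing the portmanteau inequality $\limsup_k \mu_{n_k}(F) \leq \mu(F)$ directly for closed $F \subseteq \mathbb{X}$, writing $F = (\mathrm{cl}_{\overline{\mathbb{X}}} F) \cap \mathbb{X}$ and again localising onto $K_j$. Completeness of $\mathbb{X}$ is indispensable only in the converse, where it is precisely what upgrades ``closed and totally bounded'' to ``compact.''
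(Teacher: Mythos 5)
The paper does not prove this lemma: it is quoted verbatim as a classical result with a citation to Da Prato--Zabczyk, so there is no in-paper argument to compare yours against. Your proposal is the standard textbook proof of Prokhorov's theorem and is essentially correct in both directions: the compactification of $\mathbb{X}$ into the Hilbert cube plus Banach--Alaoglu and the portmanteau inequality on the closed tightness compacts $K_j$ for sufficiency, and the total-boundedness construction of $K=\bigcap_m \overline{U_m}$ (where completeness upgrades this to compactness) for necessity. Two small points you should tighten. First, in the proof of the claim in the converse direction, the negation only furnishes, for each $N$, \emph{some} index $n_N$ with $\mu_{n_N}\big(\bigcup_{i=1}^{N}B(x_i,\delta)\big)<1-\e 2^{-m}$; you need to argue either that the $n_N$ may be taken to tend to infinity (so that a weakly convergent sub-subsequence can be extracted from a genuine subsequence of $\{\mu_n\}$), or that if a single index recurs then continuity from below already yields $\mu_{n_0}(\mathbb{X})<1$, a contradiction. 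Second, be aware that the lemma as stated in the paper glosses ``relatively compact'' as the mere existence of one weakly convergent subsequence; under that literal reading the ``only if'' implication is false (interlace a tight sequence with one escaping to infinity), and your converse argument correctly uses the standard stronger notion that every subsequence admits a weakly convergent further subsequence --- which is what the paper actually needs in its applications of the lemma.
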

	
	\begin{Lemma}[Skorokhod Theorem, \cite{Prato-Zabczyk-2014-Cambridge}]\label{Skorokhod Theorem}
		Let $\mathbb{X}$ be a complete and separable metric space. For an arbitrary sequence $\{\mu_n\}\subset\Pm{}(\mathbb{X})$ such that $\{\mu_n\}$ is tight on $(\mathbb{X},\B(\mathbb{X}))$, there exists a subsequence $\{\mu_{n_k}\}$  converging weakly to a probability measure $\mu$, and a probability space $(\Omega, \mathcal{F},\p)$ with $\mathbb{X}$-valued Borel measurable random variables $x_n$ and $x$, such that $\mu_n$ is the distribution of $x_n$, $\mu$ is the distribution of $x$, and
		$x_n\xrightarrow{n\rightarrow\infty} x$ $\p-a.s.$
	\end{Lemma}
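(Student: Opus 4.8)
The plan is to obtain the statement by combining Prokhorov's theorem with the Skorokhod representation construction. First I would apply Lemma~\ref{Prokhorov Theorem}: since $\{\mu_n\}$ is tight it is relatively compact, so there exist a subsequence $\{\mu_{n_k}\}$ and a Borel probability measure $\mu\in\Pm{}(\mathbb{X})$ with $\mu_{n_k}\rightharpoonup\mu$ weakly. After relabelling, the problem reduces to the following: given a weakly convergent sequence $\mu_k\rightharpoonup\mu$ on the Polish space $\mathbb{X}$, build $\mathbb{X}$-valued Borel random variables $x_k$ and $x$ on one common probability space, with respective laws $\mu_k$ and $\mu$, such that $x_k\to x$ almost surely. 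I would realise everything on the canonical space $(\Omega,\mathcal{F},\p)=([0,1],\B([0,1]),\mathrm{Leb})$, the one-dimensional intuition being the inverse-distribution-function coupling $x_k=F_k^{-1}(U)$, $x=F^{-1}(U)$ with $U$ uniform, which I now have to replace by a multiscale analogue valid in $\mathbb{X}$.

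The core is a nested-partition coupling. Using separability of $\mathbb{X}$ together with the regularity of the finite measure $\mu$, for each level $m\ge1$ I would construct a countable Borel partition $\{B^i_m\}_{i\ge1}$ of $\mathbb{X}$ whose cells have diameter at most $2^{-m}$ and whose boundaries are $\mu$-negligible, $\mu(\partial B^i_m)=0$, and I would arrange that the level-$(m+1)$ partition refines the level-$m$ one. The decisive gain from the null boundaries is that weak convergence upgrades to convergence of cell masses: by the portmanteau characterisation, $\mu_k(B^i_m)\to\mu(B^i_m)$ as $k\to\infty$ for every $i$ and $m$. I would then cut $[0,1]$ into consecutive half-open subintervals of lengths $\mu(B^i_m)$ for the limit and $\mu_k(B^i_m)$ for each $\mu_k$, keeping the slicings nested in $m$, and define piecewise-constant maps $\zeta_m,\zeta_m^k\colon[0,1]\to\mathbb{X}$ sending $\omega$ to a fixed representative point of the cell whose interval contains $\omega$.

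Convergence would then be extracted in two stages. For fixed $\omega$ the nesting and the geometric shrinking of the cells make $\{\zeta_m(\omega)\}_m$ a Cauchy sequence in the complete space $\mathbb{X}$; its limit $x(\omega)$ has law $\mu$ by construction, and a parallel argument (with a final independent randomisation inside each cell to absorb the residual mismatch between the partition masses and the true measure) produces $x_k$ with exact law $\mu_k$. To get the almost sure convergence $x_k\to x$ I would fix a scale $m$ and let $k\to\infty$: since $\mu_k(B^i_m)\to\mu(B^i_m)$, the level-$m$ interval endpoints attached to $\mu_k$ converge to those attached to $\mu$, so for Lebesgue-a.e. $\omega$ the point $x_k(\omega)$ eventually lands in the same diameter-$2^{-m}$ cell as $x(\omega)$; letting $m\to\infty$ then forces $d(x_k(\omega),x(\omega))\to0$.

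The hard part will be the simultaneous management of these two limits. One must couple the moving interval decompositions of $[0,1]$ attached to $\mu_k$ with the fixed one attached to $\mu$ uniformly enough across all scales $m$ that the diagonal passage $k\to\infty$ followed by $m\to\infty$ yields genuine almost sure convergence rather than mere convergence in law, while at the same time guaranteeing, via the in-cell randomisation, that each $x_k$ carries exactly the law $\mu_k$ and not a partition approximation of it; keeping these two requirements compatible is the delicate bookkeeping at the centre of the argument. For the use made in this paper the underlying space $\mathbb{X}=C([0,T];\Z)\times C([0,T];\U_0)$ is complete and separable, so the Polish hypotheses hold and the construction applies verbatim.
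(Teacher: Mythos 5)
The paper does not prove this lemma: it is quoted from \cite{Prato-Zabczyk-2014-Cambridge} (alongside Lemma~\ref{Prokhorov Theorem}), so there is no internal proof to compare against, and what you have written is a reconstruction of the classical Skorokhod representation argument. Your route is the standard one and is sound in outline: Prokhorov's theorem supplies the weakly convergent subsequence, and the representation is built on $([0,1],\mathrm{Leb})$ from nested partitions of $\mathbb{X}$ into $\mu$-continuity cells of diameter at most $2^{-m}$, together with the corresponding interval slicings of $[0,1]$. Three points that your sketch defers would have to be written out to make this a proof. First, at each level you must retain only finitely many cells carrying all but $2^{-m}$ of the $\mu$-mass; otherwise the convergence of the interval endpoints of the $\mu_k$-slicing to those of the $\mu$-slicing, which you deduce cell by cell from the portmanteau theorem, does not control the tail of the decomposition. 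Second, the exact-marginal issue: either choose the cells to be continuity sets for all of $\mu,\mu_1,\mu_2,\dots$ simultaneously (possible, since only countably many radii produce spheres of positive mass for this countable family of measures), or carry out the in-cell randomisation you mention; in the latter case the randomisation is harmless for the coupling precisely because it displaces $x_k(\omega)$ only within a cell of diameter at most $2^{-m}$, and this single observation reconciles the two competing requirements you flag as the delicate centre of the argument. Third, the double limit is resolved by fixing $m$: for Lebesgue-a.e.\ $\omega$ (off the countably many interval endpoints) there is $K(m,\omega)$ such that for $k\ge K(m,\omega)$ the points $x_k(\omega)$ and $x(\omega)$ lie in the closure of the same level-$m$ cell, whence $\limsup_{k} d(x_k(\omega),x(\omega))\le 2^{-m+1}$; letting $m\to\infty$ then gives almost sure convergence, so no genuine diagonalisation is needed. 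With these additions your argument is complete; for the use made of the lemma in Lemma~\ref{convergence}, where $\mathbb{X}=C([0,T];\Z)\times C([0,T];\U_0)$ is indeed Polish, the textbook citation the paper gives already suffices.
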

	
	\begin{Lemma}[\cite{Breit-Feireisl-Hofmanova-2018-Book,Debussche-Glatt-Temam-2011-PhyD}]\label{stochastic integral convergence}
		Let $(\Omega, \mathcal{F},\p)$ be a complete probability space and $\mathbb{X}$  be a separable Hilbert space. Let $\s_n=\left(\Omega, \mathcal{F},\left\{\mathcal{F}_{t}^{n}\right\}_{t \geq 0}, \mathbb{P},\W_n\right)$ be a sequence of stochastic bases 
		such that for each $n\ge1$,  $\W^{n}$ is cylindrical Brownian motion $($over $\U$ with the canonical embedding $\U\hookrightarrow \U_0$ being Hilbert--Schmidt$)$ with respect to $\left\{\mathcal{F}_{t}^{n}\right\}_{t \geq 0}$. Let $G_{n}$ be an $\mathcal{F}_{t}^{n}$ predictable process ranging in $\LL_{2}(\U; \mathbb{X}).$ Finally consider  
		$\s=\left(\Omega, \mathcal{F},\p,\{\mathcal{F}_t\}_{t\geq0}, \W\right)$ and $G \in L^{2}\left(0, T; \LL_{2}(\U; \mathbb{X})\right)$, which is $\mathcal{F}_t$ predictable. 
		Suppose that in probability we have
		\begin{equation*}
		\W_{n} \rightarrow \W  \text { in } C\left([0, T] ;\U_{0}\right)\ \text {and}\ \ 
		G_{n} \rightarrow G  \text { in } L^{2}\left(0, T; \LL_{2}(\U; \mathbb{X})\right).
		\end{equation*}
		Then
		$$
		\int_{0}^\cdot G_{n} \mathrm{d} \W_{n} \rightarrow \int_{0}^\cdot G \mathrm{d} \W \quad 
		\text { in } L^{2}(0, T ; \mathbb{X}) \text { in probability. }
		$$

	\end{Lemma}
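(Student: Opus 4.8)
The plan is to prove the statement by a double approximation that replaces the cylindrical noise by finitely many scalar modes and the integrands by simple (piecewise-constant in time) predictable processes, while keeping the approximation errors under control \emph{uniformly in $n$} through the It\^o isometry. Since convergence in probability in the Polish space $L^2(0,T;\mathbb{X})$ is characterized by the property that every subsequence admits a further $\widetilde{\p}$-a.s.\ convergent subsequence, I would first pass to a subsequence (not relabeled) along which $\W_n\to\W$ in $C([0,T];\U_0)$ and $G_n\to G$ in $L^2(0,T;\LL_2(\U;\mathbb{X}))$ both $\p$-a.s. In particular, writing $W_k^n=(\W_n,e_k)$ and $W_k=(\W,e_k)$, each scalar Brownian motion satisfies $W_k^n\to W_k$ uniformly on $[0,T]$, $\p$-a.s. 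It then suffices to prove the convergence of the integrals along this subsequence.

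To make the It\^o isometry usable despite the in-probability hypotheses, I would localize by the stopping times $\tau_K^n=\inf\{t\ge0:\int_0^t\|G_n\|^2_{\LL_2(\U;\mathbb{X})}\dd s>K\}\wedge T$; because $G_n\to G$ in $L^2(0,T;\LL_2(\U;\mathbb{X}))$, one has $\sup_n\p(\tau_K^n<T)\to0$ as $K\to\infty$, so it is enough to treat the stopped integrands $G_n\mathbf{1}_{[0,\tau_K^n]}$, which obey the deterministic bound $\int_0^T\|G_n\mathbf{1}_{[0,\tau_K^n]}\|^2\le K$. For such integrands the isometry gives, with $\Pi_N$ the orthogonal projection of $\U$ onto $\mathrm{span}\{e_1,\dots,e_N\}$,
$$\E\left\|\int_0^\cdot G_n(I-\Pi_N)\mathbf{1}_{[0,\tau_K^n]}\dd\W_n\right\|^2_{L^2(0,T;\mathbb{X})}\le C\,\E\int_0^{\tau_K^n}\sum_{k>N}\|G_n e_k\|^2_{\mathbb{X}}\dd t,$$
which is small uniformly in $n$ for large $N$ since the tails are uniformly integrable; the same bound controls the limiting tail. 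Hence I would reduce to the finite-mode integral $\sum_{k=1}^N\int_0^\cdot G_n e_k\,\dd W_k^n$.

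With the deterministic $L^2$-bound in force, I would approximate by simple processes: for a partition $0=t_0<\dots<t_m=T$ set the left-point averages $G_n^i=(t_i-t_{i-1})^{-1}\int_{t_{i-1}}^{t_i}G_n\dd t$, which are $\mathcal{F}_{t_i}^n$-measurable, and $G_n^{(m)}=\sum_i G_n^i\mathbf{1}_{(t_i,t_{i+1}]}$, with $G^{(m)}$ defined analogously on the limit basis. The discretization error is again controlled uniformly by the isometry, $\E\|\int_0^\cdot(G_n-G_n^{(m)})\mathbf{1}_{[0,\tau_K^n]}\dd\W_n\|^2_{L^2(0,T;\mathbb{X})}\le C\,\E\int_0^T\|G_n-G_n^{(m)}\|^2_{\LL_2(\U;\mathbb{X})}\dd t$, which tends to $0$ as $m\to\infty$ uniformly in $n$ by the equi-$L^2$-continuity furnished by $G_n\to G$. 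For fixed $m$ and $N$ the simple-integrand integral is the explicit finite sum $\sum_{k\le N}\sum_i G_n^i e_k\,(W_k^n(\cdot\wedge t_{i+1})-W_k^n(\cdot\wedge t_i))$, so the a.s.\ convergences $G_n^i\to G^i$ and $W_k^n\to W_k$ yield its convergence in $L^2(0,T;\mathbb{X})$ to the counterpart for $(G^{(m)},\W)$, $\p$-a.s. Sending $n\to\infty$, then $m\to\infty$, then $N\to\infty$ and $K\to\infty$ would close the argument.

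The hard part will be the interplay of the two simultaneous limits together with the fact that the integrators $\W_n$ and integrands $G_n$ live on the \emph{varying} filtrations $\{\mathcal{F}_t^n\}$: the simple approximations must be simultaneously adapted to the correct filtration and convergent to the limiting simple process, and it is precisely the time-averaging construction that reconciles these, since it preserves $\mathcal{F}_{t_i}^n$-measurability while commuting with the a.s.\ limit. The second delicate point is securing all the error bounds uniformly in $n$; this is where the localization by $\tau_K^n$ and the $L^2$-precompactness implied by $G_n\to G$ are essential, as they convert the in-probability hypotheses into genuine expectations amenable to the It\^o isometry and dominated convergence.
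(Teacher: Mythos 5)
The paper offers no proof of this lemma: it is quoted directly from \cite{Breit-Feireisl-Hofmanova-2018-Book,Debussche-Glatt-Temam-2011-PhyD}, where the argument goes back to \cite{Bensoussan-1995-AAM}. Your proposal reproduces exactly that standard proof --- subsequence extraction to upgrade to a.s.\ convergence, localization by $\tau_K^n$ so the It\^{o} isometry applies, finite-mode truncation of $\W_n$, and adapted step-function approximation of $G_n$ with error bounds uniform in $n$ --- and it is correct; the only step worth making explicit is that the unstopped finite sums and the stopped integrals are compared on the event $\{\tau_K^n=T\}\cap\{\tau_K=T\}$, whose complement has uniformly small probability, which is implicit in your reduction ``it is enough to treat the stopped integrands.''
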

	
	\begin{Lemma}[Gy\"{o}ngy-Krylov Lemma, \cite{Gyongy-Krylov-1996-PTRF}]\label{Gyongy convergence}
		Let $\mathbb{X}$ be a Polish space equipped with the Borel sigma-algebra $\B(\mathbb{X})$. Let $\{Y_j\}_{j\geq0}$ be a sequence of $\mathbb{X}$-valued random variables. Let 
		$$\mu_{j,l}(\cdot):=\p(Y_j\times Y_l\in\cdot) \ \ \ \forall \cdot\in\B(\mathbb{X}\times \mathbb{X}).$$ Then $\{Y_j\}_{j\geq0}$ converges in probability if and only if for every subsequence of $\{\mu_{j_k,l_k}\}_{k\geq0}$, there exists a further subsequence which weakly converges to some $\mu\in \Pm{}(\mathbb{X}\times \mathbb{X})$ satisfying
		$$\mu\left(\left\{(u,v)\in \mathbb{X}\times \mathbb{X},\ u=v\right\}\right)=1.$$
	\end{Lemma}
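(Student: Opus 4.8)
The plan is to prove the two implications separately; the forward (necessity) direction is routine, while the converse (sufficiency) direction carries the real content. Throughout, fix a complete metric $d(\cdot,\cdot)$ generating the topology of the Polish space $\mathbb{X}$, and recall the basic fact that in a complete separable metric space a sequence of random variables converges in probability if and only if it is Cauchy in probability. For necessity, suppose $Y_j\to Y$ in probability for some $\mathbb{X}$-valued random variable $Y$. Then for any two subsequences $\{Y_{m_k}\}$ and $\{Y_{n_k}\}$ one has $(Y_{m_k},Y_{n_k})\to(Y,Y)$ in probability, hence in distribution, so the joint laws $\mu_{m_k,n_k}$ converge weakly to the law $\mu$ of $(Y,Y)$. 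Since $(Y,Y)$ takes values in the diagonal $\Delta=\{(u,v)\in\mathbb{X}\times\mathbb{X}:u=v\}$ almost surely, we get $\mu(\Delta)=1$; this yields the stated condition, in fact without even passing to a further subsequence.

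For the sufficiency direction I would argue by contradiction. Assume the diagonal condition holds but $\{Y_j\}$ fails to converge in probability. By completeness of $\mathbb{X}$ this means $\{Y_j\}$ is not Cauchy in probability, so there exist $\varepsilon,\delta>0$ and subsequences $m_k,n_k\to\infty$ with
$$\p\big(d(Y_{m_k},Y_{n_k})>\varepsilon\big)\geq\delta,\qquad k\geq1.$$
Applying the hypothesis to the subsequence $\{\mu_{m_k,n_k}\}$ produces a further subsequence (not relabelled) converging weakly to some $\mu\in\Pm{}(\mathbb{X}\times\mathbb{X})$ with $\mu(\Delta)=1$. As $(u,v)\mapsto d(u,v)$ is continuous, the set $F_\varepsilon=\{(u,v):d(u,v)\geq\varepsilon\}$ is closed, and since $\{d>\varepsilon\}\subset F_\varepsilon$ the Portmanteau theorem gives
$$\delta\leq\limsup_{k\to\infty}\mu_{m_k,n_k}(F_\varepsilon)\leq\mu(F_\varepsilon).$$
But $F_\varepsilon\cap\Delta=\emptyset$, so $\mu(F_\varepsilon)\leq\mu(\Delta^{c})=1-\mu(\Delta)=0$, a contradiction. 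Hence $\{Y_j\}$ is Cauchy in probability, and completeness provides a limit to which it converges in probability.

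The main obstacle is the sufficiency direction, and within it the careful handling of the weak limit: one controls the joint laws only along a further subsequence and obtains the diagonal concentration only in the limit, so the quantitative lower bound $\delta$ must be transported to the limiting measure through the \emph{closed} set $F_\varepsilon$ via the one-sided Portmanteau inequality $\limsup_k\nu_k(F)\leq\nu(F)$. A secondary but indispensable point is the opening reduction from convergence in probability to the Cauchy property, which is precisely where the Polishness (completeness) of $\mathbb{X}$ is used; without it the contradiction argument would still rule out non-Cauchy behaviour, yet one could not manufacture the limiting random variable $Y$.
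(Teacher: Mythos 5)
The paper does not prove this lemma: it is quoted verbatim as an auxiliary result in the appendix with a citation to Gy\"{o}ngy--Krylov (1996), so there is no in-paper argument to compare against. Your proof is correct and is essentially the standard argument from the cited reference: the necessity direction via convergence in distribution of the pair $(Y_{m_k},Y_{n_k})$ to $(Y,Y)$, and the sufficiency direction by negating the Cauchy-in-probability property, passing to the weakly convergent further subsequence supplied by the hypothesis, and transporting the lower bound $\delta$ to the limit measure through the closed set $\{d\geq\varepsilon\}$ via the one-sided Portmanteau inequality, which contradicts concentration on the diagonal. All the steps check out, including the final appeal to completeness of $\mathbb{X}$ to upgrade the Cauchy property to actual convergence in probability.
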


\end{document}